\documentclass{article}
\usepackage{bbm}
\usepackage{array}
\usepackage{latexsym}
\usepackage{amssymb, amsmath}
\usepackage{amsthm}
\usepackage{enumerate}
\usepackage{graphics}
\usepackage{mathtools}
\usepackage{mathrsfs}
\usepackage{bm}
\usepackage{comment}
\usepackage{a4wide}
\usepackage[title]{appendix}
\numberwithin{equation}{section}

\newtheorem{Definition}{Definition}[section]
\newtheorem{Theorem}[Definition]{Theorem}
\newtheorem*{Proof}{Proof}
\newtheorem*{proof_main}{Proof of Theorem 1.4}
\newtheorem{Lemma}[Definition]{Lemma}
\newtheorem{_corollary}[Definition]{Corollary}

\makeatletter
\newcommand{\opnorm}{\@ifstar\@opnorms\@opnorm}
\newcommand{\@opnorms}[1]{%
  \left|\mkern-1.5mu\left|\mkern-1.5mu\left|
   #1
  \right|\mkern-1.5mu\right|\mkern-1.5mu\right|
}
\newcommand{\@opnorm}[2][]{%
  \mathopen{#1|\mkern-1.5mu#1|\mkern-1.5mu#1|}
  #2
  \mathclose{#1|\mkern-1.5mu#1|\mkern-1.5mu#1|}
}
\makeatother
\DeclareMathOperator{\esssup}{ess\sup}
\DeclareMathOperator{\essinf}{ess\inf}
\newcommand{\1}{\mbox{1}\hspace{-0.25em}\mbox{l}}

\title{Exact Hausdorff dimension \\ of the spectral measure \\ for the graph Laplacian on a sparse tree}
\author{Kota Ujino}
\date{}

\begin{document}
\maketitle

\begin{abstract}
The Hausdorff dimension of spectral measure for the graph Laplacian is shown exactly 
in terms of an intermittency function. The intermittency function can be estimated by  using one-dimensional discrete Schr\"{o}dinger operator method.
\end{abstract} 

\section{Introduction and results}
\subsection{Introduction}
We study the graph Laplacian on a sparse tree and its Hausdorff dimension.
The Hausdorff dimension is defined for sets or measures. 
We estimate the Hausdorff dimension of the spectral measure for the graph Laplacian on a sparse tree.
Note that the Hausdorff dimension of a measure and that of the support of the measure are different from each other in general. 
If the spectra are purely point spectra, then the Hausdorff dimension of the spectral measure is zero. 
If the spectrum is purely absolutely continuous, then the Hausdorff dimension of the spectral measure is one. 
In this paper, we show the Hausdorff dimension of a sparse tree exactly. 

This paper is organized as follows: 
In the rest of Section $1$, we give the main result. 
In Section $2$, we give a decomposition of the graph Laplacian. 
From this, we can identify the graph Laplacian with one-dimensional discrete Schr\"{o}dinger operators with a sparse potential. 
In the Section $3$, we prove that the intermittency function gives the upper bound of the upper Hausdorff dimension. 
In Section $4$, we prepare to estimate the intermittency function. Here, we estimate the operator kernel, by using a quadratic form theory and Helffer-Sj\"{o}strand formula. 
In Section $5$, we estimate the intermittency function and prove the main theorem. 

We define a sparse tree. 
We say that $G=(V,E)$ is a graph, if $V$ is a countable set and $E\subset \{e \in 2^V \mid \#e=2 \}$. An element of $V$ $($resp. $E$$)$ is called the vertex $($resp. the  edge$)$.
Vertices $a,b\in V$ are said to be adjacent, if $\{a,b\} \in E$. We denote by $a\sim b$, if $a,b \in V$ are adjacent. 
Note that this definition implies that there are no egdes which are adjacent to itself.
Vertices $a,b \in V$ are said to be linked, if 
there exist 
$a_i \in V$, $i=1,2,...,n-1$ such that $a_i \notin \{a,b\}$, $a \sim a_1$, $a_i \sim a_{i+1}$, $i=1,2,...,n-2$, and $a_{n-1}\sim b$. 
Here $\{a, a_1,...,a_{n-1},b\}\in 2^V$ is called a path from $a$ to $b$.
Let the degree ${\rm deg}(a)$ of the vertex $a$ be defined by  
${\rm deg}(a)=\# \{b \in V \mid a\sim b \}$.
A graph is said to be locally finite, if ${\rm deg}(a)<\infty$ for any vertex $a \in V$.
A graph $G$ is said to be connected if any vertices $a,b \in V$ are linked.
We say that a graph $G=(V,E)$ is a tree, if  
$G$ is connected and for any vertices $a,b \in V$, there exists a unique path from $a$ to $b$. 
We fix a vertex $o$ of the tree $G$, and $o$ is called the root of $G$. 
A tree $G$ with a fixed root $o$ is called a rooted tree.
Let $p(a,b) \in 2^V$ be the unique path from the vertex $a$ to the vertex $b$. 
The metric $d(\cdot,\cdot)$ on $V$ is defined by
\begin{displaymath}
d(a,b)=
\begin{cases}
0,
&a=b,
\\
\#p(a,b)-1,
&a \neq b.
\end{cases}
\end{displaymath}
Let $G$ be a rooted tree with a root $o$, and let  
$S_n=\{a\in V \mid d(o,a)=n\}$ for $n=0,1,...$.
We say that a rooted tree $G=(V,E)$ is a spherically homogeneous tree if 
any vertices in $S_n$ have the same degree $d_n$.
A locally finite spherically homogeneous tree $G$ is uniquely determined by the sequence $(g_n)_{n=0}^{\infty}$,
\[
g_n=
\begin{cases}
d_0,& n=0,\\
d_n-1,& n\geq1.
\end{cases}
\]
\begin{Definition}\label{sparse}
Let $L_n=2^{n^n}$, $n=1,2,... ,$ and $\Gamma \in (0,1)$. 
We say that a locally finite spherically homogeneous tree $G=(V,E)$ is a $\Gamma$- sparse tree, if for any $n \geq 0$,
\begin{displaymath}
g_n=
\begin{cases}
[n^{\frac{1-\Gamma}{\Gamma}}],&n\in \{L_m \mid m\in \mathbb{N}\},\\
1,&n \notin \{L_m \mid m \in \mathbb{N}\}.
\end{cases}
\end{displaymath}
\end{Definition}

We define the graph Laplacian for the locally finite graph.
Let $G=(V,E)$ be a locally finite graph.
Let $l^2(V)$ be the set of square summable functions on $V$, and this is the Hilbert space with the inner product given by 
\begin{eqnarray}
\left(f,g\right)=
\sum_{u\in V}\overline{f(u)}g(u).\nonumber
\end{eqnarray}
Let $\mathcal{D}\subset l^2(V)$ be defined by
$\begin{displaystyle}
\mathcal{D}=
\left\{
f:V\rightarrow \mathbb{C}\mid \#{\rm supp}(f)<\infty
\right\}
\end{displaystyle}$.
Let $L$, $A$, and $D$ be operators with its domain $\mathcal{D}$ , and defined by
\begin{eqnarray}
Lf(u)&=&
\sum_{v\sim u}(f(v)-f(u)),\nonumber\\
Af(u)&=&
\sum_{v\sim u}f(v),\nonumber\\
Df(u)&=&
\sum_{v\sim u}f(u)={\rm deg}(u)f(u).\nonumber
\end{eqnarray}
These are called graph Laplacian, adjacency matrix, and degree matrix, respectively. The graph Laplacian $L$ is essentially self-adjoint, if the graph is connected \cite[Threom3.1]{Jorgensen}.

Let $(X,d_X)$ be a metric space, and $\mathcal{B}(X)$ be the Borel $\sigma$-field of $(X,d_X)$. Let $A$ be a subset of $X$ and the diameter $d_X(A)$ of $A$ be defined by $d_X(A)= \sup \{d_X(x,y) \mid x,y \in A\}$. 
Let $\delta>0$ and a family $\{A_i\}_{i=1}^{\infty}$ of subsets of $X$
is called a $\delta$-{\rm cover} of $A$, if  
$
A\subset \bigcup_{i=1}^{\infty}A_i
$ and 
$
\sup_{1\leq i < \infty}d_X(A_i)\leq \delta
$.
\begin{Definition}
Let $\alpha \in [0,\infty)$ and $\delta>0$. 
Let $h^{\alpha}_{\delta}, h^{\alpha}:2^X \rightarrow [0,\infty]$ 
be defined by
\begin{eqnarray}
h^{\alpha}_{\delta}(A)&=& \inf \left\{ \sum_{i=1}^{\infty}d_X(A_i)^{\alpha} \mid \{A_i\}_{i=1}^{\infty}\text{is a $\delta$-{\rm cover} of $A$}
\right\}, \nonumber \\
h^{\alpha}(A)&=&\lim_{\delta \rightarrow 0}h^{\alpha}_{\delta}(A). \nonumber
\end{eqnarray}
\end{Definition}
Here $h^{\alpha}$ is called $\alpha$-dimensional Hausdorff measure of $X$. 
Actually, the restriction of $h^{\alpha}$ to $\mathcal{B}(X)$ is a measure.
Let ${\rm dim}A$ be defined for $A\in 2^{X}$ by 
\[
{\rm dim}A= \sup \left \{\alpha \mid h^{\alpha}(A) \neq 0\right\}.
\] 
This is called the Hausdorff dimension of $A$. 
It follows that
if $0\leq \alpha < {\rm dim}A$, then $h^{\alpha}(A)=\infty$ and that 
if ${\rm dim}A <\alpha $, then $h^{\alpha}(A)=0$. 
Let $\mu : \mathcal{B}(X)\rightarrow [0,\infty]$ be a measure, and let the lower Hausdorff dimension  
${\rm dim}_*\mu$ and 
the upper Hausdorff dimension ${\rm dim}^*\mu$
 of $\mu$  
be defined by
\begin{eqnarray}
{\rm dim}_*\mu&=& \inf \left\{
{\rm dim}A \mid \text{$A\in \mathcal{B}(X)$ such that  }\mu(A) \neq 0
\right\}, \nonumber \\
{\rm dim}^*\mu &=& \inf \left\{
{\rm dim}A \mid \text{$A\in \mathcal{B}(X)$ such that } \mu(\mathbb{R}\setminus A)=0
\right\}.\nonumber
\end{eqnarray}
If $\alpha={\rm dim}_*\mu={\rm dim}^*\mu$, then $\mu$ is said to have the exact $\alpha$-Hausdorff dimension.

Let $L$ be the graph Laplacian of the $\Gamma$-sparse tree. The following lemma is proved by Breuer \cite{Breuer}.
\begin{Lemma}\label{former-result}
Let $H=-\overline{L}$, and 
let $E$ be the spectral measure of $H$ and 
$\tilde{E}$ be the restriction of $E$ to the interval $(0,4)$, 
where $\overline{L}$ is the clousre of $L$.
Then it follows that 
\begin{enumerate}[$(1)$]
\item
$\sigma_{\rm ac}(H)=\emptyset$,
$\sigma_{\rm pp}(H)\cap(0,4)=\emptyset$, 
$\sigma_{\rm sc}(H)\cap(0,4)=(0,4)$,
\item 
$\Gamma \leq {\rm dim}_*\tilde{E} \leq {\rm dim}^* \tilde{E} \leq \frac{2\Gamma}{1+\Gamma}$.
\end{enumerate}
\end{Lemma}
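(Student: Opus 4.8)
The plan is to reduce the tree problem to a one-dimensional one and then read off the spectral type and the dimension from transfer-matrix (Prüfer) asymptotics. First I would use the spherical homogeneity to decompose $H=-\overline{L}$. On radial functions, passing to the basis $\phi(n)=\sqrt{b_n}\,\psi(n)$ with $b_n=\#S_n=\prod_{k<n}g_k$, the adjacency part becomes a half-line Jacobi matrix with off-diagonal entries $a_n=\sqrt{g_n}$ and the degree part contributes the diagonal entries $d_n$. Decomposing $\ell^2(V)$ into the cyclic subspaces generated at the root and at each branching vertex, $H$ is unitarily equivalent to a direct sum of half-line Jacobi matrices $J$, all of the same sparse type: $a_n=1$ and diagonal $2$ except at the levels $n=L_m$, where $a_{L_m}=\sqrt{g_{L_m}}$ and the diagonal is $g_{L_m}+1$. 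Since the summands share this structure it suffices to analyse one such $J$ (this is the decomposition set up in Section 2, following Breuer \cite{Breuer}), and the controlling scale is the barrier growth exponent $\rho:=\lim_m \frac{\log g_{L_m}}{\log L_m}=\frac{1-\Gamma}{\Gamma}$.

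For part $(1)$ I would run the EFGP/Prüfer transform for $J$ at $E\in(0,4)$. Between barriers the operator is the free Laplacian, so the transfer matrices are essentially rotations and the solution amplitude is constant; across the barrier at $L_m$ the transfer matrix has norm of order $\sqrt{g_{L_m}}$, so on the free stretch $[L_M,L_{M+1})$ the amplitude is $R_M$ with $\log R_M^2=\sum_{m\le M}\log g_{L_m}\sim\rho\log L_M$, the sum being dominated by its last term because $L_m=2^{m^m}$ forces $\log L_{m+1}/\log L_m\to\infty$. Hence $\liminf_{L\to\infty}\|T(L,E)\|=\infty$ for every $E\in(0,4)$, so by the Last--Simon characterisation of the a.c.\ spectrum $\sigma_{\rm ac}(H)=\emptyset$. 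The same growth, together with equidistribution of the Prüfer phases at the widely separated barriers, forbids an $\ell^2$ solution, whence $\sigma_{\rm pp}(H)\cap(0,4)=\emptyset$; and since on each very long free stretch $J$ looks free, every $E\in(0,4)$ is an approximate eigenvalue, giving $(0,4)\subset\sigma(H)$. Thus the spectrum in $(0,4)$ is purely singular continuous.

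For part $(2)$ I would feed the same amplitude data into the Jitomirskaya--Last power-law subordinacy theory. Tracking the $L^2$-norms $\|u\|_L$ (constant-amplitude free pieces separated by jumps) and the scale $L(\epsilon)$ determined by $\|u\|_{L(\epsilon)}^2\sim\epsilon^{-1}$, the local ratio $\log\mu((E-\epsilon,E+\epsilon))/\log\epsilon$ oscillates as $L(\epsilon)$ sweeps a free interval, because the amplitude boost $g_{L_{M+1}}$ at the far end competes with the length $L_{M+1}$ of the interval. Carrying out this computation yields two extreme exponents, $\tfrac{1}{1+\rho}$ at the $\liminf$ and $\tfrac{2}{2+\rho}$ at the $\limsup$; with $\rho=\frac{1-\Gamma}{\Gamma}$ these are exactly $\Gamma$ and $\frac{2\Gamma}{1+\Gamma}$. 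Passing to the $\liminf$ over scales and over the summands gives ${\rm dim}_*\tilde{E}\ge\Gamma$, and the $\limsup$ gives ${\rm dim}^*\tilde{E}\le\frac{2\Gamma}{1+\Gamma}$.

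The main obstacle is the upper estimate ${\rm dim}^*\tilde{E}\le\frac{2\Gamma}{1+\Gamma}$: it requires a lower bound on the solution growth (equivalently, showing that $\mu$ does concentrate) along a subsequence of scales, which needs quantitative control of the Prüfer phase increments across the free stretches uniformly in $E$, whereas the absence of a.c.\ spectrum and the lower dimension bound use only upper bounds on growth. Two bookkeeping points also need care: transferring the dimension of a single sparse Jacobi matrix to the whole direct sum (the branch operators are shifts of one another, so their spectral measures agree in dimension up to the discrete modifications), and making the exponent estimates hold simultaneously for almost every $E\in(0,4)$, so that they control the dimension of $\tilde{E}$ rather than that of individual energies.
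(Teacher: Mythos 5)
The paper does not actually prove this lemma; it is quoted verbatim from Breuer \cite{Breuer}, and your sketch follows exactly the route of that cited proof: the decomposition of $H$ into sparse half-line Jacobi matrices (the same decomposition the paper sets up in Section 2), transfer-matrix/amplitude growth across the barriers at $L_m$, and Jitomirskaya--Last power-law subordinacy to convert the $L$-norm asymptotics into the dimension bounds. Your exponents are the right ones, since with $\rho=\frac{1-\Gamma}{\Gamma}$ one has $\frac{1}{1+\rho}=\Gamma$ and $\frac{2}{2+\rho}=\frac{2\Gamma}{1+\Gamma}$, and you correctly flag the genuinely delicate step (the quantitative non-subordinacy needed for the upper bound on ${\rm dim}^*\tilde{E}$), so your proposal is essentially the same argument as the one the paper relies on.
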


We obtain the main theorem below.
\begin{Theorem}\label{main}
We suppose the same assumptions as Lemma \ref{former-result}.
Then
$\Gamma={\rm dim}_*\tilde{E}={\rm dim}^*\tilde{E}$, and 
$\tilde{E}$ has the exact $\Gamma$-Hausdorff dimension,
\end{Theorem}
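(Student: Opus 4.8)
\begin{proof_main}
The plan is to deduce the theorem from Lemma \ref{former-result} by establishing the single missing inequality ${\rm dim}^*\tilde E\le\Gamma$. Lemma \ref{former-result} already gives $\Gamma\le{\rm dim}_*\tilde E\le{\rm dim}^*\tilde E$, so once ${\rm dim}^*\tilde E\le\Gamma$ is shown we obtain $\Gamma={\rm dim}_*\tilde E={\rm dim}^*\tilde E$ and the exact $\Gamma$-Hausdorff dimension. First I would invoke the decomposition of Section 2: on the cyclic subspace generated by $\delta_o$ the operator $H$ is unitarily equivalent to a half-line Jacobi matrix $J$ with off-diagonal weights $a_n=\sqrt{g_n}$ and diagonal entries $d_n=g_n+1$, and the Hausdorff dimension of $\tilde E$ equals that of the restriction to $(0,4)$ of the scalar spectral measure $\mu=\mu_{\delta_o}$ of $J$ (the angular blocks of Section 2 are shifted copies with the same barrier structure, so the same bound applies to each). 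Since $g_n=1$ away from the sparse set and $g_{L_m}=[L_m^{(1-\Gamma)/\Gamma}]$ is large at $n=L_m$, the matrix $J$ is the free discrete Laplacian, of spectrum $[0,4]$, perturbed by barriers of strength $\sim\sqrt{g_{L_m}}$ at the positions $L_m$; thus $(0,4)$ lies in the bulk of the free spectrum and the question becomes the exact dimension of the spectral measure of a one-dimensional sparse-barrier operator.

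Next I would use the local-dimension description of the upper Hausdorff dimension: ${\rm dim}^*\mu$ equals the $\mu$-essential supremum of $\underline d_\mu(E)=\liminf_{\epsilon\to0}\frac{\log\mu((E-\epsilon,E+\epsilon))}{\log\epsilon}$. Hence it suffices to prove $\underline d_\mu(E)\le\Gamma$ for $\mu$-a.e.\ $E\in(0,4)$, i.e.\ to exhibit, for a.e.\ $E$, a sequence of scales $\epsilon_M\to0$ along which $\mu((E-\epsilon_M,E+\epsilon_M))$ is anomalously thick. The \emph{intermittency function} is precisely the exponent recording this thickness; Section 3 shows that it bounds ${\rm dim}^*\tilde E$ from above, and Sections 4--5 evaluate it. The governing quantity is the Pr\"ufer amplitude of a solution at energy $E\in(0,4)$: it is essentially constant between barriers and, for a.e.\ $E$, is amplified by a factor $\sim\sqrt{g_{L_m}}$ at the $m$-th barrier, so after $M$ barriers it equals $R_M=\prod_{m=1}^M\sqrt{g_{L_m}}$ and $\|u\|_L^2\asymp R_M^2\,L$ for $L\in(L_M,L_{M+1})$.

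The exponent $\Gamma$ then emerges from two features of $L_m=2^{m^m}$. Because these positions grow super-exponentially, $\log R_M^2=\sum_{m\le M}\log g_{L_m}\sim\tfrac{1-\Gamma}{\Gamma}\log L_M$ (the last barrier dominating the sum), and $\log L_M\ll\log L_{M+1}$; consequently $\log(R_M^2L_{M+1})\sim\log L_{M+1}$ while $\log(R_{M+1}^2L_{M+1})\sim\tfrac1\Gamma\log L_{M+1}$. The decisive estimate to be established is the thickness bound at the barrier-crossing scales: with $\epsilon_M=\bigl(R_{M+1}^2L_{M+1}\bigr)^{-1}$,
\[
\mu\bigl((E-\epsilon_M,E+\epsilon_M)\bigr)\gtrsim\frac{1}{R_M^2\,L_{M+1}}.
\]
Granting it,
\[
\underline d_\mu(E)\le\liminf_{M\to\infty}\frac{\log\bigl(R_M^2L_{M+1}\bigr)}{\log\bigl(R_{M+1}^2L_{M+1}\bigr)}=\liminf_{M\to\infty}\frac{\log L_{M+1}}{\tfrac1\Gamma\log L_{M+1}}=\Gamma
\]
for $\mu$-a.e.\ $E$, which gives ${\rm dim}^*\tilde E\le\Gamma$ and completes the proof.

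The hard part is the displayed thickness bound holding for $\mu$-a.e.\ $E$. Two points require care. First, one must turn a subordinacy / Jitomirskaya--Last type comparison between $\mu((E-\epsilon,E+\epsilon))$ and the solution norm at the dual length into a quantitative lower bound \emph{through} a barrier transition; this is where I would use the operator-kernel estimates of Section 4, representing a smoothed spectral projection by the Helffer--Sj\"ostrand formula $f(H)=\frac1\pi\int_{\mathbb C}\bar\partial\tilde f(z)(H-z)^{-1}\,dx\,dy$, so that $\langle\delta_o,f(H)\delta_o\rangle$ with $f$ a bump of width $\epsilon$ about $E$ yields the desired lower bound on $\mu(B(E,\epsilon))$. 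Second, one must justify that the amplification at each barrier is genuinely of order $\sqrt{g_{L_m}}$ for a.e.\ $E$, i.e.\ control the measure-zero set of Pr\"ufer phases at which a solution meets a barrier almost tangentially to its contracting direction; I would handle this by an equidistribution estimate for the phases combined with a Borel--Cantelli argument, ensuring the exceptional phase windows are summable against $\mu$. I expect this second point---securing the generic barrier amplification simultaneously at infinitely many barriers for $\mu$-almost every energy---to be the main obstacle.
\end{proof_main}
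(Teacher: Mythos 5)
Your reduction is the right one: Lemma \ref{former-result} supplies $\Gamma\le{\rm dim}_*\tilde E$, the Jacobi decomposition of Section 2 reduces everything to the scalar measures $\mu^{(k)}_\psi$, and the only thing to prove is ${\rm dim}^*\le\Gamma$. Your exponent arithmetic is also correct ($\log R_M^2\sim\frac{1-\Gamma}{\Gamma}\log L_M$ with the last barrier dominating, so $\log(R_{M+1}^2L_{M+1})\sim\frac1\Gamma\log L_{M+1}$), and it mirrors the role of the quantity $q_N\to0$ in Lemma \ref{618}. But after correctly describing the paper's mechanism (${\rm dim}^*\mu\le\beta^{(k)}_\psi(p)$ from Lemma \ref{H}, then $\beta^{(k)}_\psi(p)=\frac{p+1}{p+1/\Gamma}$ from Lemma \ref{HH}, then $p\to0^+$), you abandon it and instead propose a direct local-dimension argument: a lower bound on $\mu(B(E,\epsilon_M))$ at the scales $\epsilon_M=(R_{M+1}^2L_{M+1})^{-1}$ for $\mu$-a.e.\ $E$, obtained from Pr\"ufer-amplitude growth plus subordinacy theory. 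That is a genuinely different route --- essentially the Jitomirskaya--Last/Breuer route --- and it is not what the paper does: the paper never lower-bounds $\mu$ of balls at specific scales; it upper-bounds the time-averaged moments $\langle|X|^p\rangle^{(k)}_\psi(T)$ for $L_N\le T\le L_N^{1/\Gamma}$ (Corollary \ref{PQ}) and feeds that into the Barbaroux--Combes--Montcho-type inequality of Lemma \ref{H}.

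The gap is that your ``decisive estimate'' $\mu(B(E,\epsilon_M))\gtrsim(R_M^2L_{M+1})^{-1}$ for $\mu$-a.e.\ $E$ is exactly the hard content, and the tools you name do not deliver it. Since $\mu(B(E,\epsilon))\le 2\epsilon\,{\rm Im}\,m(E+i\epsilon)$, your bound forces ${\rm Im}\,m(E+i\epsilon_M)\gtrsim g_{L_{M+1}}\to\infty$ along that sequence, i.e.\ a quantitative \emph{lower} bound on the imaginary part of the Borel transform of the unknown singular measure at prescribed scales; the Helffer--Sj\"ostrand machinery of Section 4 produces \emph{upper} bounds on off-diagonal kernels $|(\delta_i,f(H)\delta_j)|$ (used in the paper to kill the tails of the wave packet in the moment estimates), not lower bounds on $(\delta_1,f(H)\delta_1)$. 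Worse, your Borel--Cantelli/equidistribution step must control resonant Pr\"ufer phases at infinitely many barriers for $\mu$-almost every energy, where $\mu$ is itself the singular measure under study --- equidistribution statements of this kind are available for Lebesgue-a.e.\ energy or after spectral averaging, which is useless here; this is precisely the obstruction at which the subordinacy approach stalled at the bound $\frac{2\Gamma}{1+\Gamma}$ of Lemma \ref{former-result}. The paper avoids the issue entirely: its upper bound on the moments rests on transfer-matrix and resolvent estimates (Lemmas \ref{7.1.4}, \ref{7.1.5}, and the comparison operator $H^{(k)}_N$) that hold for \emph{every} $E\in B_{\nu/2}$ and are integrated in $E$, so no genericity of phases is ever needed. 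As written, your proposal identifies the right answer but does not constitute a proof; to complete it along the paper's lines you would replace the thickness bound by the moment upper bound of Corollary \ref{PQ} and conclude via ${\rm dim}^*\mu^{(k)}_\psi\le\beta^{(k)}_\psi(p)\to\Gamma$ as $p\to0^+$.
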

This theorem implies the corollary below.
\begin{_corollary}
For any $\Gamma\in(0,1)$, the restriction of the spectral measure for the graph Laplacian on the $\Gamma$-sparse tree to the interval $(0,4)$ has the exact $\Gamma$-Hausdorff dimension.
\end{_corollary}

\subsection{Preceding results}
The spectral analysis of a sparse tree bears some similarities to the theory of one-dimensional discrete Schr\"{o}dinger operators with a sparse potential. 
In Simon-Stolz \cite{sparsepotential}, Schr\"{o}dinger operators with a sparse potential have singular continuous spectrum. 
Gilbert-Pearson \cite{Gilbert} finds a relationship between the behavior of subordinate solutions and the spectrum of one-dimensional Schr\"{o}dinger operators. 
Jitomirskaya and Last \cite{Jitomirskaya} show a relationship between the Hausdorff dimension of the spectral measure and the behavior of subordinate and non-subordinate solutions. Moreover, they estimate the Hausdorff dimension of the spectral measure by calculating the $L$-norm of non-subordinate solutions. This subordinate solution method is also used in \cite{Breuer}.
 
On the other hand, the relationship between the type of spectra and the time-averaged behavior of Schr\"{o}dinger operators is also studied.
RAGE theorem implies that if the initial state is singular continuous, then the time-averaged evolution goes to infinity. 
Barbaroux, Combes and Montcho \cite{Bar} give a lower bound of the time-averaged momentum of one-dimensional discrete Schr\"{o}dinger operators, by using the upper Hausdorff dimension. 
This also shows an inequality between the upper Hausdorff dimension and an intermittency function. 
It is, however, crucial to estimate the intermittency function exactly. 
Tcheremchantsev \cite{Tcheremchantsev} gives the intermittency function explicitly in the case of sparse potentials. 
We will apply \cite{Tcheremchantsev} to the graph Laplacian on a sparse tree.

\section{Preliminaries}
Threre are some decomposition methods for Schr\"{o}dinger operators on some trees. 
These methods stem from Naimark and Solomyak \cite{Naimark}. 
Breuer \cite{BreuerA}, Kostenko, and Nicolussi \cite{Kostenko} developped this method recently. 
They study the case of the continuum Kirchhoff Laplacian. 
Allard, Froese \cite{Allard} and Breuer \cite{Breuer} study the case of the graph Laplacian. 
We introduce their results as Lemma $\ref{jacobi identification}$. 
Their results imply that the graph Laplacian on the spherically homogeneous tree is identified with the direct sum of Jacobi matrices. 
Hence, it is sufficient to study Jacobi matrices instead of the graph Laplacian $H$.

Let $G=(V,E)$ be a spherically homogeneous tree determined by the sequence $\{ g_n\}_{n=0}^{\infty}$ and $H=-\overline{L}$, where
$\overline{L}$ is the closure of $L$. 
Let $\alpha_n=\#S_n$ for $n=0,1,...$ and $\alpha_{-1}=0$. 
Since   
$\{\alpha_n\}_{n=0}^{\infty}$ is non-decreasing, 
there exists a unique $N(k)\in \mathbb{N}\cup \{0\}$ such that 
$\alpha_{N(k)-1}< k \leq \alpha_{N(k)}$ for every $k \in \mathbb{N}$.
Let $k,n\in \mathbb{N}$ and let 
$d_{k}=(d_k(n))_{n=1}^{\infty}$ and $a_{k}=(a_{k}(n))_{n=1}^{\infty}$ be defined by the following: in the case of $k=1$,
\begin{eqnarray}
d_1(n)&=&
=
\begin{cases}
g_0&(n=1),\\
g_{n-1}+1&(n \geq 2),
\end{cases}
,\nonumber\\
a_{1}(n)&=&=-\sqrt{g_{n-1}}.\nonumber
\end{eqnarray}
In the case of $k\geq2$, 
\begin{eqnarray}
d_k(n)&=&g_{n+N(k)-1}+1,\nonumber\\
a_{k}(n)&=&-\sqrt{g_{n+N(k)-1}}.\nonumber
\end{eqnarray}
By calculatig straightfowardly, we see that for any $k,n \in \mathbb{N}$  
\begin{eqnarray}\label{4.1}
d_k(n)=a_k(n)^2+1-\delta_1(k)\delta_1(n),
\end{eqnarray}
where 
$\delta_j(k)=
\begin{cases}
1&(k=j),\\
0&(k \neq j).
\end{cases}
$
Let Jacobi matrices $H^{(k)},A^{(k)},D^{(k)} :l^2(\mathbb{N})\rightarrow l^2(\mathbb{N})$ be defined by 
\begin{displaymath}
H^{(k)}=
\renewcommand{\arraystretch}{1.6}
\left(
\begin{array}{ccccccc}
d_{k}({1})&-a_{k}({1})\\
-a_{k}({1})&d_{k}({2})&-a_{k}({2})\\
&-a_{k}({2})&d_{k}({3})&-a_{k}({3})\\
&&-a_{k}({3})&\ddots&\ddots\\
&&&\ddots\\
\end{array}\right),
\renewcommand{\arraystretch}{1}
\end{displaymath}
\begin{displaymath}
A^{(k)}=
\renewcommand{\arraystretch}{1.6}
\left(
\begin{array}{ccccccc}
0&a_{k}({1})\\
a_{k}({1})&0&a_{k}({2})\\
&a_{k}({2})&0&a_{k}({3})\\
&&a_{k}({3})&\ddots&\ddots\\
&&&\ddots\\
\end{array}\right)
\renewcommand{\arraystretch}{1},
D^{(k)}=
\renewcommand{\arraystretch}{1.6}
\left(
\begin{array}{ccccccc}
d_{k}({1})&\\
&d_{k}({2})&\\
&&d_{k}({3})&\\
&&&\ddots&\\
&&&\\
\end{array}\right).
\renewcommand{\arraystretch}{1}
\end{displaymath}
Note that $H^{(k)}=D^{(k)}-A^{(k)}$. The next lemma shows the decomposition of graph Laplacian.
\begin{Lemma}\label{jacobi identification}
$H$ and $\begin{displaystyle}
\bigoplus_{k=1}^{\infty}H^{(k)} 
\end{displaystyle}$
are unitarly equivalent.
\end{Lemma}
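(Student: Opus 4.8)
The plan is to realize the claimed unitary equivalence through an explicit orthonormal basis of $l^2(V)$ adapted to the spherical symmetry, following the decomposition of Naimark and Solomyak. For each index $k$ I introduce a subspace $\mathcal{H}_k\subset l^2(V)$. For $k=1$, $\mathcal{H}_1$ is the closed span of the radial functions, i.e. functions that are constant on each sphere $S_n$. For $k\geq2$ with $N(k)=n$, I fix the branching vertex $w\in S_{n-1}$ to which $k$ is assigned (there are $\alpha_{n-1}$ such vertices, each carrying $g_{n-1}-1$ indices, and $\alpha_n-\alpha_{n-1}=\alpha_{n-1}(g_{n-1}-1)$ indices $k$ are born at level $n$), together with a vector $\beta=(\beta_i)\in\mathbb{C}^{g_{n-1}}$ taken from a fixed orthonormal basis of the antisymmetric subspace $\{\beta:\sum_i\beta_i=0\}$; then $\mathcal{H}_k$ consists of the functions supported on the subtree hanging below $w$ that are constant on each sphere within each child-branch and carry the angular weight $\beta_i$ on the $i$-th branch. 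Concretely, such a function is determined by a radial profile $(\phi_j)_{j\geq1}$ via $f(u)=\beta_i\phi_j$ for $u$ at distance $j$ from $w$ inside the $i$-th branch, and $f(u)=0$ elsewhere.

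First I would verify orthonormality and completeness. Writing $\gamma_j$ for the number of vertices of a single branch at distance $j$ from $w$, the reweighting $\psi_j=\sqrt{\gamma_j}\,\phi_j$ together with the normalization $\|\beta\|=1$ turns the natural unit profiles $\phi_l=\delta_{jl}$ into an orthonormal system, each of whose members is supported on a single sphere. The key combinatorial point is the count: on the sphere $S_n$ exactly one basis vector of each $\mathcal{H}_k$ with $N(k)\leq n$ is supported, and the number of such $k$ equals $\alpha_n=\dim l^2(S_n)$. Since vectors born at distinct branching vertices have disjoint support, vectors born at the same vertex are indexed by orthogonal $\beta$'s, and vectors born at different levels are orthogonal by the nested (Haar-type) block structure over children, these $\alpha_n$ vectors form an orthonormal basis of $l^2(S_n)$; taking the union over $n$ yields an orthonormal basis of $l^2(V)$ and hence $l^2(V)=\bigoplus_k\mathcal{H}_k$.

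Next I would check that $H=-\overline{L}$ leaves each $\mathcal{H}_k$ invariant and acts there as the stated Jacobi matrix. The one computation that must be done is $Lf$ for $f\in\mathcal{H}_k$: at an interior vertex of a branch one collects the contributions of the parent and of the $g_{m+j}$ children and reads off the three-term recursion, while at the children of $w$ the parent term vanishes, producing the truncated first row; the antisymmetry $\sum_i\beta_i=0$ is exactly what guarantees that $Lf$ has no component at $w$ itself, so there is no leakage out of the subtree and $Lf$ stays in $\mathcal{H}_k$. In the coordinates $\psi_j=\sqrt{\gamma_j}\,\phi_j$ the recursion becomes symmetric with diagonal $g_{j+N(k)-1}+1$ and off-diagonal $-\sqrt{g_{j+N(k)-1}}$, which is precisely $H^{(k)}$; the root case $k=1$ produces the modified first diagonal entry $g_0$ because the root has no parent, accounting for the $-\delta_1(k)\delta_1(n)$ correction in \eqref{4.1}.

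The remaining point is operator-theoretic: the intertwining has so far been established on the finitely supported functions, which form a core for $\overline{L}$, and whose image under the basis identification is a core for each $H^{(k)}$; essential self-adjointness of $L$ (quoted above) then lets me pass to closures and conclude $U\,H\,U^{*}=\bigoplus_k H^{(k)}$, with $U$ the unitary just built. I expect the main obstacle to be not any single computation but the bookkeeping of the completeness step --- matching the indices $k$ to (branching vertex, antisymmetric mode) pairs level by level and confirming $\sum_{N(k)\le n}1=\alpha_n$ --- together with the care needed to separate orthogonality of the full basis vectors from orthogonality of their restrictions to a given sphere, which here is automatic only because each basis vector is supported on a single sphere.
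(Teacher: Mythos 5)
Your proposal is correct and follows essentially the same route as the paper's Appendix A: decompose $l^2(V)$ sphere by sphere into one-dimensional radial pieces indexed by $k$, verify invariance under $A$ and $D$, and read off the Jacobi matrix in the reweighted coordinates, passing to closures via essential self-adjointness. The only difference is presentational: the paper produces the new basis vectors at each branching level by an unspecified Gram--Schmidt completion of the normalized pushforwards $\pi_n e^{(n)}_k$, whereas you realize them explicitly as antisymmetric modes $\sum_i \beta_i = 0$ attached to branching vertices; both choices give the same invariant subspaces and the same matrix elements.
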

\begin{Proof}\rm
See Appendix \ref{Decomposition of the graph Laplacian}.
\qed
\end{Proof}

\section{Intermittency function and Hausdorff dimension}
In this section, we introduce an intermittency function and give an important inequality in Lemma $\ref{H}$ which shows that the intermittency function is the upper bound of Hausdorff dimension.

Let $\psi \in l^2(\mathbb{N})$ and  
$E^{(k)}$ be the spectral measure of $H^{(k)}$.
We consider the time-averaged dynamics of $\text{exp}(-itH^{(k)})\psi$. 
Let a finite measure $\mu_{\psi}^{(k)} : \mathcal{B}^1 \rightarrow [0,\infty]$ be defined by 
\begin{eqnarray}
\mu_{\psi}^{(k)}(A)= (\psi,E^{(k)}(A)\psi).
\nonumber
\end{eqnarray}

\begin{Definition}
Let $\psi_k(t)= e^{-itH^{(k)}}\psi$ and $\psi_k(t,n)= (\delta_n, \psi_k(t))$ 
for $t \in \mathbb{R}$ and $n\in \mathbb{N}$.
Let $a_{\psi}^{(k)}(n,T)$, $\langle |X|^p\rangle_{\psi}^{(k)}(T)$, and $\beta_{\psi}^{(k)}(p)$ be defined by, for $T >0$ and $p>0$,
\begin{eqnarray}
a_{\psi}^{(k)}(n,T)&=&
\cfrac{1}{T}\int_{0}^{\infty} e^{-\frac{t}{T}}|\psi_k(t,n)|^2 dt,
\nonumber
\\
\langle |X|^p\rangle_{\psi}^{(k)}(T)&=&
\sum_{n=1}^{\infty}n^pa_{\psi}^{(k)}(n,T),
\nonumber
\\
\beta_{\psi}^{(k)}(p)&=&
\cfrac{1}{p}\liminf_{T \rightarrow \infty}\cfrac{{\rm log}\langle |X|^p\rangle_{\psi}^{(k)}(T)}{{\rm log}T}.
\nonumber
\end{eqnarray}
\end{Definition}
We call $\beta_{\psi}^{(k)}$ the intermittency function.
The closed subspace $\mathcal{H}_{\psi}$ of $l^2(\mathbb{N})$ is defined by
\begin{eqnarray}
\mathcal{H}_{\psi}=
\overline{
\left\{
p(H^{(k)})\psi \in l^2(\mathbb{N})\mid  \text{$p$ is a polynomial}
\right\}},
\nonumber
\end{eqnarray}
and let $U_{\psi}:\mathcal{H}_{\psi}\rightarrow L^2(\mathbb{R}, d\mu_{\psi}^{(k)})$ 
be defined by
\[
U_{\psi}(p(H^{(k)})\psi)(x)= p(x).
\]

\begin{Lemma}\label{H}
Let $\alpha = {\rm dim}^*(\mu_\psi^{(k)})$ and $\epsilon>0$. 
Then there exists 
$C_1=C_1(\epsilon, \psi)>0$ such that for any $T>0$,
\[
\langle |X|^p\rangle_{\psi}^{(k)}(T) \geq C_1T^{p(\alpha-\epsilon)}.
\]
In particular, for any $p>0$,
\[
{\rm dim}^*(\mu_{\psi}^{(k)}) \leq \beta_{\psi}^{(k)}(p).
\]
\end{Lemma}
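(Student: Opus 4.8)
The plan is to reduce the statement to the Guarneri--Combes--Last lower bound for a uniformly H\"older continuous \emph{piece} of the spectral measure, and then transfer the resulting transport estimate back to the full vector $\psi$. Throughout I fix $\alpha={\rm dim}^*(\mu_\psi^{(k)})$ and $\epsilon>0$, and I use the standard characterization of the upper Hausdorff dimension through the local scaling exponent $\gamma(x)=\liminf_{\delta\to0}{\rm log}\,\mu_\psi^{(k)}([x-\delta,x+\delta])/{\rm log}\,\delta$ (cf. \cite{Haus}), namely $\alpha=\mu_\psi^{(k)}\text{-}\esssup_x\gamma(x)$. First I would record the elementary identity $\sum_n a_\psi^{(k)}(n,T)=\|\psi\|^2$ (by unitarity of $e^{-itH^{(k)}}$ and $\frac1T\int_0^\infty e^{-t/T}dt=1$), which shows that $a_\psi^{(k)}(\cdot,T)$ is a conserved mass distribution over the sites; the whole point is then to force a fixed fraction of this mass out to large $n$.

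The first real step is to extract a uniformly H\"older piece carrying the highest local dimension. Since $\alpha=\esssup\gamma$, the set $S=\{x:\gamma(x)>\alpha-\epsilon/2\}$ has positive $\mu_\psi^{(k)}$-measure, and for $x\in S$ there is $\delta_x>0$ with $\mu_\psi^{(k)}([x-\delta,x+\delta])\le\delta^{\alpha-\epsilon}$ for $\delta<\delta_x$. Intersecting $S$ with $\{\delta_x>1/m\}$ and choosing $m$ so that this subset still has positive measure, I obtain a Borel set $S_0$ with $\mu_\psi^{(k)}(S_0)>0$ on which $\nu:=\mu_\psi^{(k)}|_{S_0}$ is uniformly $(\alpha-\epsilon)$-H\"older continuous (a standard Egorov-type uniformization, with the large-interval range absorbed into the constant). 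Setting $\psi_S=E^{(k)}(S_0)\psi$ gives a vector whose spectral measure is exactly $\nu$ and with $\|\psi_S\|^2=\nu(\mathbb{R})>0$. Writing $(\delta_n,e^{-itH^{(k)}}\psi_S)=\widehat{g_n\nu}(t)$ with $g_n=U_{\psi_S}(P_{\mathcal{H}_{\psi_S}}\delta_n)\in L^2(d\nu)$ and $\|g_n\|_{L^2(\nu)}^2=\|P_{\mathcal{H}_{\psi_S}}\delta_n\|^2\le1$, I would invoke the bound for uniformly $(\alpha-\epsilon)$-H\"older measures (as used in \cite{Bar,Tcheremchantsev}), $\frac1T\int_0^\infty e^{-t/T}|\widehat{g_n\nu}(t)|^2\,dt\le C\,T^{-(\alpha-\epsilon)}\|g_n\|_{L^2(\nu)}^2$. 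Summing over $n\le N$ yields $\sum_{n\le N}a_{\psi_S}^{(k)}(n,T)\le C\,N\,T^{-(\alpha-\epsilon)}$, so choosing $N=\lfloor c\,T^{\alpha-\epsilon}\rfloor$ keeps this partial mass at most $Cc$, forcing a fixed fraction of $\|\psi_S\|^2$ past site $N$.

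Finally I would transfer the estimate to $\psi$ itself by decomposing $\psi=\psi_S+\psi'$ with $\psi'=E^{(k)}(\mathbb{R}\setminus S_0)\psi$. The time-averaged site probabilities split as $a_\psi^{(k)}(n,T)=a_{\psi_S}^{(k)}(n,T)+a_{\psi'}^{(k)}(n,T)+R_n(T)$, where Cauchy--Schwarz in the variable $t$ gives $|R_n(T)|\le2\sqrt{a_{\psi_S}^{(k)}(n,T)\,a_{\psi'}^{(k)}(n,T)}$. Summing over $n\le N$ and using $\sum_{n\le N}a_{\psi_S}^{(k)}\le Cc$ together with $\sum_n a_{\psi'}^{(k)}=\|\psi'\|^2$, the partial cross term is $O(\sqrt{c})$; hence for $c$ small enough $\sum_{n\le N}a_\psi^{(k)}(n,T)\le\|\psi'\|^2+\tfrac12\|\psi_S\|^2$, and by mass conservation $\sum_{n>N}a_\psi^{(k)}(n,T)\ge\tfrac12\|\psi_S\|^2$. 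Therefore $\langle|X|^p\rangle_\psi^{(k)}(T)\ge N^p\sum_{n>N}a_\psi^{(k)}(n,T)\ge C_1\,T^{p(\alpha-\epsilon)}$ for large $T$, and the small-$T$ range is absorbed into $C_1$ using positivity and continuity of $a_\psi^{(k)}(n_0,\cdot)$ at a site with $\psi(n_0)\ne0$. The ``in particular'' claim then follows by taking ${\rm log}(\cdot)/{\rm log}\,T$ and $\liminf_{T\to\infty}$, which gives $\beta_\psi^{(k)}(p)\ge\alpha-\epsilon$ for every $\epsilon>0$, hence $\beta_\psi^{(k)}(p)\ge{\rm dim}^*(\mu_\psi^{(k)})$.

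The hard part will be this last transfer step, not the H\"older bound. The naive hope $a_\psi^{(k)}\ge a_{\psi_S}^{(k)}$ fails pointwise in $n$ and $T$, since the spectral projection $E^{(k)}(S_0)$ does not act monotonically on the time-averaged dynamics; the cross term $R_n(T)$ is genuinely present and vanishes only in the total sum (where it reflects $\|\psi\|^2=\|\psi_S\|^2+\|\psi'\|^2$). The device that saves the argument is to estimate $R_n$ only on the \emph{small} range $n\le N$, precisely where $\sum_{n\le N}a_{\psi_S}^{(k)}$ is small, so that Cauchy--Schwarz controls it by a constant multiple of $\sqrt{c}$ and the escaped-mass lower bound survives. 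The secondary technical point, the uniformization from the pointwise scales $\delta_x$ to a single H\"older scale, is routine but must be carried out before the bound of \cite{Bar,Tcheremchantsev} can be applied.
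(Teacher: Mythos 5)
Your proposal is correct and follows essentially the same route as the paper's proof: extract a uniformly $(\alpha-\epsilon)$-H\"older spectral piece $\psi_S=E^{(k)}(S_0)\psi$ via the local-dimension/Egorov argument, bound $\sum_{n\le N}a_{\psi_S}^{(k)}(n,T)\le CNT^{-(\alpha-\epsilon)}$ by the Strichartz-type Fourier decay estimate for uniformly H\"older measures (the paper's Lemma~\ref{cor1}), and control the cross term by Cauchy--Schwarz so that a fixed fraction of $\|\psi_S\|^2$ escapes past $N\sim T^{\alpha-\epsilon}$. The only cosmetic difference is that you fix $N=\lfloor cT^{\alpha-\epsilon}\rfloor$ directly, whereas the paper defines the escape threshold $N(T)$ implicitly as the maximal admissible $N$ and then bounds it from below.
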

\begin{Proof}\rm
We denote 
$\mu_{\psi}^{(k)}$, $a_{\psi}^{(k)}$, $\langle |X|^p \rangle^{(k)}_{\psi}$, and $\psi_{k}(t,n)$ by 
$\mu_{\psi}$, $a_{\psi}$, $\langle |X|^p \rangle_{\psi}$, and $\psi(t,n)$ for simplicity of notation.
Let $\epsilon>0$ and $\gamma:\mathbb{R}\rightarrow \mathbb{R}$ be the local Hausdorff dimension of $\mu_{\psi}$:
\[
\gamma(x)=
\liminf_{\delta \rightarrow 0}
\cfrac{{\rm log}(\mu_{\psi}([x-\delta, x+\delta]))}{{\rm log}\delta}.
\nonumber
\]
By \cite[Chapter 10, Proposition 10.1]{Haus}, we see that $\begin{displaystyle}\mu {\rm \mathchar`-}
\esssup\displaylimits_{x}\gamma(x)={\rm dim}^*\mu_{\psi}=\alpha
\end{displaystyle}$.
Thus there exists $S_{\epsilon}\in \mathcal{B}^1$ such that 
$\mu_\psi(S_{\epsilon})>0$ and 
$\gamma(x)>\alpha-\epsilon$ for $x \in S_{\epsilon}$. 
Let
$\begin{displaystyle}
\gamma_{\delta}(x)=
\inf_{\delta^{\prime}<\delta}\cfrac{{\rm log}\mu_{\psi}([x-\delta^{\prime},x+\delta^{\prime}])}{{\rm log}\delta^{\prime}}
\end{displaystyle}$. 
By Egorov's theorem, there exists 
$S_{\epsilon}^{\prime}\subset S_{\epsilon}$ such that 
$\mu_{\psi}(S_{\epsilon}^{\prime})>0$ and 
$\gamma_{\delta}$ converges uniformly to $\gamma$ on $S_{\epsilon}^{\prime}$. 
Let $\psi^{\prime} = E(S_{\epsilon}^{\prime})\psi$. We see that 
$\|\psi^{\prime}\|^2=\mu_{\psi}(S_{\epsilon}^{\prime})>0$ and 
$\mu_{\psi^{\prime}}$ is uniformly $(\alpha-\epsilon)$-H\"{o}lder continuous.
Let $\chi= \psi-\psi^{\prime}$. Then we see that
\begin{eqnarray}
&&\sum_{n=1}^{N}
a_{\psi}(n,T)
\nonumber\\
&&=
\cfrac{1}{T}
\int_{\mathbb{R}}e^{-\frac{t}{T}}
\sum_{n=1}^{N}|\psi^{\prime}(t,n)+\chi(t,n)|^2dt
\nonumber
\\
&&\leq
\sum_{n=1}^{N}
\cfrac{1}{T}
\int_{\mathbb{R}}e^{-\frac{t}{T}}
|\psi^{\prime}(t,n)|^2dt
+2
\left(
\sum_{n=1}^{N}
\cfrac{1}{T}
\int_{\mathbb{R}}e^{-\frac{t}{T}}
|\psi^{\prime}(t,n)|^2dt
\right)^{\frac{1}{2}}\|\chi\|
+\|\chi\|^2.
\label{eq}
\end{eqnarray}
We assume that $c >0$ and $N\in \mathbb{N}$ satisfy
\begin{eqnarray}
\left(
\sum_{n=1}^{N-1}
\cfrac{1}{T}
\int_{\mathbb{R}}e^{-\frac{t}{T}}
|\psi^{\prime}(t,n)|^2dt
\right)^{\frac{1}{2}}
\leq c\|\psi^\prime\|.
\label{q2e}
\end{eqnarray}
By $(\ref{eq})$ and $(\ref{q2e})$, we see that
\begin{eqnarray}
\sum_{n=1}^{N-1}
a_{\psi}(n,T)\leq \left(
c\|\psi^\prime\|+\|\chi\|
\right)^2.
\nonumber
\end{eqnarray}
Taking $c=-\frac{\|\chi\|}{\|\psi^{\prime}\|}+
\sqrt{\left(\frac{\|\chi\|}{\|\psi^{\prime}\|}\right)^2+\cfrac{1}{2}}$\:,
we have 
\begin{equation}
\sum_{n=1}^{N-1}a_{\psi}(n,T)
\leq
 \frac{1}{2}\|\psi^{\prime}\|^2+\|\chi\|^2.
 \label{LK}
\end{equation}
On the other hand, let $C_1^{\prime}=-\frac{\|\chi\|}{\|\psi^{\prime}\|}+
\sqrt{\left(\frac{\|\chi\|}{\|\psi^{\prime}\|}\right)^2+\cfrac{1}{2}}$ and
\begin{equation}
N(T)=
{\rm max}
\left\{
N \in \mathbb{N} \mid
\left(
\sum_{n=1}^{N-1}
\cfrac{1}{T}
\int_{\mathbb{R}}e^{-\frac{t}{T}}
|\psi^{\prime}(t,n)|^2dt
\right)^{\frac{1}{2}}
\leq C_1^{\prime}\|\psi^{\prime}\|
\right\}.
\label{KO8}
\end{equation}
Then $(\ref{q2e})$ holds for $c=C_1^{\prime}$ and $N=N(T)$. 
By $(\ref{LK})$, we have
\begin{equation}
\sum_{n=N(T)}^{\infty}a_{\psi}(n,T)\geq \cfrac{1}{2}\|\psi^{\prime}\|^2.
\label{MNBV}
\end{equation}
Note that $\mu_{\psi^\prime}$ is uniformly $(\alpha-\epsilon)$-H\"{o}lder continuous. 
Hence, by Lemma \ref{cor1}, there exists $\widetilde{C}=\widetilde{C}(\alpha-\epsilon, \mu_{\psi^\prime})>0$ such that for any $T>0$,
\begin{equation}
\sum_{n=1}^{N}
\cfrac{1}{T}
\int_{\mathbb{R}}e^{-\frac{t}{T}}
|\psi^{\prime}(t,n)|^2dt
=
\sum_{n=1}^{N}\cfrac{1}{T}\int_{0}^{\infty} e^{-\frac{t}{T}}|\widehat{U_{\psi^\prime}\delta_n \mu_{\psi^\prime}}|^2 dt
\leq
\widetilde{C} NT^{-(\alpha-\epsilon)}.
\label{hint}
\end{equation}
By $(\ref{KO8})$ and $(\ref{hint})$, we have 
\begin{equation}
N(T)\geq \frac{(C_1^{\prime} \|\psi^{\prime}\|)^2}{\widetilde{C}}
T^{\alpha-\epsilon}. 
\label{MNBV1}
\end{equation}
$(\ref{MNBV})$ and $(\ref{MNBV1})$ show that there exists $C_1=C_1(\epsilon, \psi)>0$ such that for any $T>0$,
\begin{eqnarray}
\langle |X|^p\rangle_{\psi}(T)\geq\sum_{n=N(T)}^{\infty}n^pa_{\psi}(n,T)
\geq\cfrac{1}{2}\|\psi^{\prime}\|^2N(T)^p
\geq
C_1T^{p(\alpha-\epsilon)}.
\nonumber
\end{eqnarray}
Moreover, we see that for $\epsilon>0$ and $T^{\prime}>1$,
\begin{eqnarray}
\cfrac{1}{p}
\inf_{T>T^{\prime}}
\cfrac{{\rm log}\langle |X|^p\rangle_{\psi}(T)}{{\rm log}T}\geq
\alpha-\epsilon+
\cfrac{1}{p}
\inf_{T>T^{\prime}}
\cfrac{{\rm log}C_1}{{\rm log}T}
=\alpha-\epsilon.
\nonumber
\end{eqnarray}
This implies our assertion.
\qed
\end{Proof}

\section{Estimates of operator kernel}
In this section we prepare some lemmas to estimate the intermittency function. 
We estimate the operator kernel in Lemma $\ref{6.2}$ and $\ref{6.2.4}$ by using a quadratic form theory and Hellfer-Sj\"{o}strand formula. 

We denote $H^{(k)}$, $a_{k}(n)$, and $d_k(n)$ by 
$H$, $a(n)$, and $d(n)$, respectively for simplicity of notation.
Let $\beta>0$ and let $\mathscr{D}=\{f:\mathbb{N}\rightarrow \mathbb{C}\mid \#{\rm supp}(f)<\infty\}$.
Let $P$, $\Delta$, and $M_{\beta}:l^2(\mathbb{N})\rightarrow l^2(\mathbb{N})$ with its domain $\mathscr{D}$ be defined by
\begin{eqnarray}
P f(n)&=& a(n)f(n+1),
\nonumber
\\
\Delta f&=& (P-I)f,
\nonumber
\\
M_{\beta} f(n)&=& \beta^nf(n),
\nonumber
\end{eqnarray}
and let $T_{\beta}= M_{\beta}^{-1}TM_{\beta}$ for an operator $T:l^2(\mathbb{N})\rightarrow l^2(\mathbb{N})$.

\begin{Lemma}
Let $f\in \mathscr{D}$. It follows that
\begin{enumerate}[$(1)$]
\item
$P^*f(n)=\begin{cases}
0&(n=1),
\label{I}
\\
a(n-1)f(n-1)&(n \geq 2),
\end{cases}
$
\item
\label{II}
$
H^{(k)}f=
\begin{cases}
(\Delta\Delta^*-\delta_1) f&(k=1),\\
\Delta\Delta^* f&(k\geq2),
\end{cases}
$
\item
$\Delta_{\beta}f=(\beta P-I)f$,
\label{III}
\item
$(\Delta^*)_{\beta}f=(\beta^{-1} P^*-I)f$.
\label{IV}
\end{enumerate}
\end{Lemma}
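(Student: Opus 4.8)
The four identities are all verified by direct computation from the definitions, and I would carry them out in the order $(1)$, $(3)$, $(4)$, $(2)$, since the last one uses the first. Throughout, recall from the definition of the Jacobi data that $a(n)=a_k(n)=-\sqrt{g_{n+N(k)-1}}\in\mathbb{R}$, so complex conjugation acts trivially on the coefficients; this is what keeps the adjoint and conjugation computations clean.

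\emph{Items $(1)$, $(3)$, $(4)$.} For $(1)$ I would pair $Pf$ against $g\in\mathscr{D}$ and reindex the sum:
\[
(Pf,g)=\sum_{n\geq1}a(n)\overline{f(n+1)}\,g(n)=\sum_{m\geq2}\overline{f(m)}\,a(m-1)g(m-1),
\]
from which the stated formula for $P^{*}$, and in particular $P^{*}g(1)=0$, can be read off. For $(3)$ and $(4)$ the point is that $M_\beta$ merely rescales each coordinate: a one-line computation gives $M_\beta^{-1}PM_\beta f(n)=\beta^{-n}a(n)\beta^{n+1}f(n+1)=\beta\,Pf(n)$ and, using $(1)$, $M_\beta^{-1}P^{*}M_\beta f(n)=\beta^{-1}P^{*}f(n)$. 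Since $M_\beta^{-1}IM_\beta=I$ and $\Delta=P-I$, $\Delta^{*}=P^{*}-I$, conjugation distributes over the difference and yields $\Delta_\beta=\beta P-I$ and $(\Delta^{*})_\beta=\beta^{-1}P^{*}-I$.

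\emph{Item $(2)$.} This is the substantive identity. I would expand
\[
\Delta\Delta^{*}=(P-I)(P^{*}-I)=PP^{*}-P-P^{*}+I,
\]
and compute the diagonal part $PP^{*}$ using $(1)$: since $P^{*}f(n+1)=a(n)f(n)$ for every $n\geq1$, one gets $PP^{*}f(n)=a(n)^{2}f(n)$ for all $n$. Collecting terms, for every $f\in\mathscr{D}$,
\[
\Delta\Delta^{*}f(n)=(a(n)^{2}+1)f(n)-a(n)f(n+1)-a(n-1)f(n-1),
\]
with the convention that $a(n-1)f(n-1)$ is absent at $n=1$ because $P^{*}f(1)=0$. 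This is precisely the action of a tridiagonal Jacobi matrix with off-diagonal $-a(n)$ and diagonal $a(n)^{2}+1$. Comparing with the action $d_k(n)f(n)-a(n)f(n+1)-a(n-1)f(n-1)$ of $H^{(k)}$ and substituting the structure relation $(\ref{4.1})$, namely $d_k(n)=a(n)^{2}+1-\delta_1(k)\delta_1(n)$, gives $H^{(k)}f(n)=\Delta\Delta^{*}f(n)-\delta_1(k)\delta_1(n)f(n)$. For $k\geq2$ the correction vanishes and $H^{(k)}=\Delta\Delta^{*}$; for $k=1$ it equals $\delta_1(n)f(n)$, i.e.\ the rank-one operator $\delta_1$, so $H^{(1)}=\Delta\Delta^{*}-\delta_1$.

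The computations are elementary, so the only genuine care is bookkeeping at the left boundary $n=1$. The step to get right is that $PP^{*}$ carries no boundary defect (the identity $P^{*}f(n+1)=a(n)f(n)$ already holds at $n=1$), so the entire boundary effect sits in the missing $a(0)f(0)$ term of $P^{*}$; this is exactly the discrepancy recorded by $(\ref{4.1})$ through the factor $\delta_1(k)\delta_1(n)$, and it is what distinguishes the $k=1$ channel, the one containing the root with its degree anomaly, from the others. One should also be careful to expand $\Delta\Delta^{*}$ as $PP^{*}-P-P^{*}+I$ rather than with $P^{*}P$.
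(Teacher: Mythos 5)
Your proposal is correct and follows essentially the same route as the paper: derive $P^{*}$ by pairing and reindexing, verify $(3)$ and $(4)$ by direct rescaling, and compute $\Delta\Delta^{*}f(n)=(a(n)^{2}+1)f(n)-a(n)f(n+1)-a(n-1)f(n-1)$ (with the $n=1$ boundary term absent), then compare with $H^{(k)}$ via the relation $d_k(n)=a_k(n)^2+1-\delta_1(k)\delta_1(n)$. The only cosmetic difference is that you expand $\Delta\Delta^{*}=PP^{*}-P-P^{*}+I$ while the paper evaluates $\Delta(\Delta^{*}f)$ by nested application; both yield the same tridiagonal formula and the same $k=1$ correction.
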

\begin{Proof}\rm
Let $f,g \in \mathscr{D}$. Then we see that 
\begin{eqnarray}
(Pg,f)=\sum_{n=1}^{\infty}a(n)\overline{g(n+1)}f(n)=\sum_{n=2}^{\infty}\overline{g(n)}a(n-1)f(n-1).
\nonumber
\end{eqnarray}
This implies $(\ref{I})$. We see that 
\begin{eqnarray}
\Delta\Delta^* f(n)&=&a(n)\Delta^* f(n+1)-\Delta^* f(n)
\nonumber
\\
&=&
\begin{cases}
a(1)(a(1)f(1)-f(2))+f(1)&(n=1)\\
a(n)(a(n)f(n)-f(n+1))-a(n-1)f(n-1)+f(n)&(n\geq 2)
\end{cases}
\nonumber
\\
&=&
\begin{cases}
-a(1)f(2)+\{a(1)^2+1\}f(1)&(n=1)\\
-a(n)f({n+1})+\{a(n)^2+1\}f(n)-a({n-1})f({n-1})&(n\geq 2).
\end{cases}
\label{098}
\end{eqnarray}
$(\ref{II})$ follows from $(\ref{098})$ and (\ref{4.1}). 
We can prove $(\ref{III})$ and $(\ref{IV})$ straightforwardly.
\qed
\end{Proof}

Let $\beta>0$ and the sesquilinear form $\mathfrak{h}_{\beta}:l^2(\mathbb{N})\times l^2(\mathbb{N})\rightarrow \mathbb{C}$ with its domain $\mathscr{D}$ be defined by
\begin{equation}
\mathfrak{h}_{\beta}(f,g)
=
((\Delta_{\beta})^*f,(\Delta^*)_{\beta}g)
=(f,H_{\beta} \; g).
\nonumber
\end{equation}

\begin{Lemma}
For any $t>0$ and $f\in \mathscr{D}$,
\begin{eqnarray}
|\mathfrak{h}_{\beta}[f]-\mathfrak{h}_1[f]|
\leq C(\beta)\;\cfrac{t}{2}\;
\mathfrak{h}_1[f]
+C(\beta)(1+\cfrac{1}{2t}\:)\|f\|^2,
\label{5.2.2}
\end{eqnarray}
where $C(\beta)=|\beta-1|+|\beta^{-1}-1|=|\beta-\beta^{-1}|$.
\end{Lemma}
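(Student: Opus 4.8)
The plan is to reduce the inequality to an explicit expansion of the quadratic form followed by a routine Cauchy--Schwarz/Young argument. First I would use the previous lemma to rewrite the form purely in terms of $P^*$: since $\Delta_{\beta}=\beta P-I$ and $(\Delta^*)_{\beta}=\beta^{-1}P^*-I$, taking the adjoint of the first identity gives $(\Delta_{\beta})^*=\beta P^*-I$, so that
\[
\mathfrak{h}_{\beta}(f,g)=((\beta P^*-I)f,\,(\beta^{-1}P^*-I)g).
\]
Writing $u=P^*f$ and expanding the diagonal form $\mathfrak{h}_{\beta}[f]=\mathfrak{h}_{\beta}(f,f)$ (using that $\beta$ is real, so that the convention of conjugate-linearity in the first slot causes no sign issues), the pure $\|u\|^2$ term carries the factor $\beta\cdot\beta^{-1}=1$ and is therefore independent of $\beta$, and likewise the $\|f\|^2$ term does not depend on $\beta$. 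Hence in the difference $\mathfrak{h}_{\beta}[f]-\mathfrak{h}_1[f]$ only the cross terms survive, and I expect to obtain
\[
\mathfrak{h}_{\beta}[f]-\mathfrak{h}_1[f]=(1-\beta)(P^*f,f)+(1-\beta^{-1})(f,P^*f).
\]

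This cancellation of the $\beta$-dependence in the leading quadratic term is the conceptual heart of the estimate; everything afterwards is mechanical. Since $(f,P^*f)=\overline{(P^*f,f)}$, the triangle inequality yields $|\mathfrak{h}_{\beta}[f]-\mathfrak{h}_1[f]|\leq(|1-\beta|+|1-\beta^{-1}|)\,|(P^*f,f)|=C(\beta)\,|(P^*f,f)|$, which simultaneously accounts for the stated identity $C(\beta)=|\beta-\beta^{-1}|$ (the two summands have opposite signs whether $\beta>1$ or $\beta<1$).

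It then remains to control $|(P^*f,f)|$ by $\mathfrak{h}_1[f]$ and $\|f\|^2$. I would apply Cauchy--Schwarz, $|(P^*f,f)|\leq\|P^*f\|\,\|f\|$, and then the triangle inequality $\|P^*f\|=\|(P^*-I)f+f\|\leq\|\Delta^*f\|+\|f\|$. Because $\mathfrak{h}_1[f]=\|\Delta^*f\|^2$ directly from the definition (at $\beta=1$ both factors equal $\Delta^*$), this gives $|(P^*f,f)|\leq\sqrt{\mathfrak{h}_1[f]}\,\|f\|+\|f\|^2$. Finally, Young's inequality $ab\leq\frac{t}{2}a^2+\frac{1}{2t}b^2$ applied to the cross term $\sqrt{\mathfrak{h}_1[f]}\,\|f\|$ bounds it by $\frac{t}{2}\mathfrak{h}_1[f]+\frac{1}{2t}\|f\|^2$; combining and multiplying through by $C(\beta)$ produces exactly the claimed inequality.

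The only step requiring genuine care is the first one: correctly identifying $(\Delta_{\beta})^*=\beta P^*-I$ and tracking the inner-product convention when expanding $\mathfrak{h}_{\beta}[f]$, so that the $\beta\beta^{-1}=1$ cancellation and the precise coefficients $(1-\beta)$ and $(1-\beta^{-1})$ emerge. There is no analytic subtlety elsewhere, since $f\in\mathscr{D}$ has finite support and all operators act with finite sums.
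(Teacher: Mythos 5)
Your argument is correct and follows essentially the same route as the paper: expand $\mathfrak{h}_{\beta}[f]=((\beta P^*-I)f,(\beta^{-1}P^*-I)f)$, observe that the $\|P^*f\|^2$ and $\|f\|^2$ terms are $\beta$-independent so only the cross terms $(1-\beta)(P^*f,f)+(1-\beta^{-1})(f,P^*f)$ survive, bound $|(P^*f,f)|\leq\|\Delta^*f\|\,\|f\|+\|f\|^2$ using $P^*=\Delta^*+I$, and finish with Young's inequality together with $\mathfrak{h}_1[f]=\|\Delta^*f\|^2$. The only cosmetic difference is that the paper applies $P=\Delta+I$ inside the inner product rather than the triangle inequality on $\|P^*f\|$; the estimates obtained are identical.
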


\begin{Proof}\rm
Let $t>0$ and $f \in \mathscr{D}$. Then we see that
\begin{eqnarray}
|\mathfrak{h}_{\beta}[f]-\mathfrak{h}_1[f]|&=&
|((\beta P^*-I)f,(\beta^{-1}P^*-I)f)-((P-I)f,(P^*-I)f)|
\nonumber
\\
&\leq&
|\beta-1||(f,Pf)|
+|\beta^{-1}-1||(f,P^*f)|
\nonumber
\\
&\leq&
|\beta-1|\{
(\Delta^*f,f)+(f,f)
\}
+|\beta^{-1}-1|
\{
(f,\Delta^*f)+(f,f)
\}
\nonumber
\\
&\leq&
C(\beta)\|f\|\|\Delta^*f\|+C(\beta)\|f\|^2
\nonumber
\\
&\leq&
C(\beta)
\left(
\cfrac{t}{2}\|\Delta^*f\|^2+\cfrac{1}{2t}\|f\|^2+\|f\|^2
\right).
\nonumber
\end{eqnarray}
\qed
\end{Proof}

\begin{Lemma}\label{6.2}
Let $z\in \mathbb{C}^+\coloneqq \{z\in \mathbb{C}\mid \text{\rm Im}z>0\}$. Let $0<\gamma<1$, and let $\eta_z$,  $m_z$, and $\alpha_z$ be 
\begin{eqnarray}
\eta_z&=& {\rm dist}(z, \sigma(H)),
\nonumber
\\
m_z&=& \cfrac{\eta_z}{\sqrt{\eta_z+|z|}+1},
\nonumber
\\
\alpha_z(\gamma)&=& \cfrac{1}{4}(\gamma m_z+\sqrt{(\gamma m_z)^2+16}).
\nonumber
\end{eqnarray}
Then for any $i,j \in \mathbb{N}$,
\begin{equation}
|(\delta_i, (H-z)^{-1}\delta_j)|\leq
{\alpha_z(\gamma)}^{-|i-j|}
\cfrac{1}{\eta_z}
\left(
\cfrac{1+\gamma}{1-\gamma}
\right)^2.
\nonumber
\end{equation}
\end{Lemma}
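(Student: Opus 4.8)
The plan is to prove this Combes--Thomas type estimate by conjugating the resolvent with the exponential weight $M_\beta$ and choosing $\beta$ so as to balance the resulting exponential factor against the norm of the conjugated resolvent. Since $M_\beta$ is self-adjoint and invertible with $M_\beta\delta_n=\beta^n\delta_n$, one has $(H-z)^{-1}=M_\beta(H_\beta-z)^{-1}M_\beta^{-1}$, and therefore
\begin{equation}
(\delta_i,(H-z)^{-1}\delta_j)=\beta^{i-j}\,(\delta_i,(H_\beta-z)^{-1}\delta_j),
\nonumber
\end{equation}
so that $|(\delta_i,(H-z)^{-1}\delta_j)|\le \beta^{i-j}\,\|(H_\beta-z)^{-1}\|$. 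Choosing $\beta<1$ when $i\ge j$ (and $\beta>1$ when $i<j$, or equivalently invoking the symmetry $(\delta_i,(H-z)^{-1}\delta_j)=(\delta_j,(H-z)^{-1}\delta_i)$ of the real Jacobi resolvent) turns this into genuine off-diagonal decay, provided $\|(H_\beta-z)^{-1}\|$ can be controlled uniformly.

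Next I would control $\|(H_\beta-z)^{-1}\|$ by viewing $\mathfrak{h}_\beta$ as a form perturbation of $\mathfrak{h}_1$, which is associated with the self-adjoint operator $H$. Inequality $(\ref{5.2.2})$ supplies exactly the relative form bound $|\mathfrak{h}_\beta[f]-\mathfrak{h}_1[f]|\le a\,\mathfrak{h}_1[f]+b\|f\|^2$ with $a=C(\beta)\tfrac{t}{2}$ and $b=C(\beta)(1+\tfrac{1}{2t})$, for an arbitrary auxiliary parameter $t>0$. I then invoke the standard resolvent-comparison estimate for relatively form-bounded perturbations: if $z\in\rho(H)$ and $2\|(aH+b)(H-z)^{-1}\|\le\gamma<1$, then $z\in\rho(H_\beta)$ and
\begin{equation}
\|(H_\beta-z)^{-1}-(H-z)^{-1}\|\le \cfrac{4\gamma}{(1-\gamma)^2}\,\|(H-z)^{-1}\|.
\nonumber
\end{equation}
Since $H$ is self-adjoint with $\|(H-z)^{-1}\|=\eta_z^{-1}$, this yields $\|(H_\beta-z)^{-1}\|\le\eta_z^{-1}\bigl(1+\tfrac{4\gamma}{(1-\gamma)^2}\bigr)=\eta_z^{-1}\bigl(\tfrac{1+\gamma}{1-\gamma}\bigr)^2$, which is precisely the constant appearing in the claim.

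It remains to verify the hypothesis $2\|(aH+b)(H-z)^{-1}\|\le\gamma$ and to pin down $\beta$. By the spectral theorem, $\|(aH+b)(H-z)^{-1}\|=\sup_{\lambda\in\sigma(H)}\tfrac{|a\lambda+b|}{|\lambda-z|}\le a+\tfrac{a|z|+b}{\eta_z}$, using $|a\lambda+b|\le a|\lambda|+b$ and $|\lambda|\le|\lambda-z|+|z|$. Substituting $a,b$ gives the expression $\tfrac{C(\beta)}{\eta_z}\bigl[\tfrac{t}{2}(\eta_z+|z|)+1+\tfrac{1}{2t}\bigr]$, whose minimum over $t$, attained at $t=(\eta_z+|z|)^{-1/2}$, equals $C(\beta)/m_z$ with $m_z=\eta_z/(\sqrt{\eta_z+|z|}+1)$. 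Hence the hypothesis reduces to $C(\beta)\le\gamma m_z/2$; solving $C(\beta)=|\beta-\beta^{-1}|=\gamma m_z/2$ gives exactly $\beta^{-1}=\tfrac14\bigl(\gamma m_z+\sqrt{(\gamma m_z)^2+16}\bigr)=\alpha_z(\gamma)$, so that $\beta^{i-j}=\alpha_z(\gamma)^{-|i-j|}$ under the sign convention above. Combining the three steps gives the stated bound.

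The main obstacle I anticipate is the bookkeeping in this final optimization: one must choose the two free parameters $t$ and $\beta$ so that the relative bound $(\ref{5.2.2})$ feeds into the perturbation theorem with constant exactly $\gamma$, and it is precisely the algebraic identity $C(\beta)=\gamma m_z/2\iff\beta=\alpha_z(\gamma)^{-1}$ that forces the quantities $m_z$ and $\alpha_z$ in the statement to arise naturally rather than being ad hoc. A secondary point requiring care is that for $k=1$ the operator carries the extra bounded rank-one term $-\delta_1$, which commutes with $M_\beta$; the conjugation identity and the relative form bound are therefore unaffected, and the same argument applies with $H=H^{(1)}$ and its own value of $\eta_z$.
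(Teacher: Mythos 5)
Your proposal is correct and follows essentially the same route as the paper's proof: conjugation by $M_\beta$, the relative form bound $(\ref{5.2.2})$ fed into the resolvent-comparison statement of Lemma \ref{quadratic thm}, the choice $t=(\eta_z+|z|)^{-1/2}$, and the identification $C(\beta)=\gamma m_z/2\iff\beta^{\pm1}=\alpha_z(\gamma)$, with the two signs (or the symmetry of the kernel) handling $i<j$ versus $i\ge j$. The only difference is presentational — you derive $t$ and $\beta$ by optimization where the paper posits them and verifies equality with $\gamma$ via AM--GM — and your explicit remark that the rank-one term $-\delta_1$ in the $k=1$ case commutes with $M_\beta$ is a small point of care the paper leaves implicit.
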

\begin{Proof}\rm
It follows from $\|(H-z)^{-1}\|=\eta_z^{-1}$ that for any $t>0$
\begin{eqnarray}
2
\left\|
C(\beta)
\left(
\cfrac{t}{2}H+1+\frac{1}{2t}
\right)
(H-z)^{-1}
\right\|
\leq
C(\beta)
\left\{
\left(
1+\cfrac{|z|}{\eta_z}
\right)t+
\left(
2+\cfrac{1}{t}
\right)
\cfrac{1}{\eta_z}
\right\}
.
\label{Y}
\end{eqnarray}
Let $\gamma \in (0,1)$ and 
\begin{eqnarray}
t_z
&=&\cfrac{1}{\sqrt{\eta_z+|z|}},
\nonumber
\\
\beta_z&=&
\frac{1}{4}(\gamma m_z+\sqrt{(\gamma m_z)^2+16})>1.
\nonumber
\end{eqnarray}
By the inequality of arithmetic and geometric means, we see that for any $z\in \mathbb{C^+},$
\begin{eqnarray}
C(\beta_z)
\left\{
\left(
1+\cfrac{|z|}{\eta_z}
\right)t_z+
\left(
2+\cfrac{1}{t_z}
\right)
\cfrac{1}{\eta_z}
\right\}
&=&
C(\beta_z)
\left\{
\left(
1+\cfrac{|z|}{\eta_z}
\right)t_z+
\cfrac{1}{\eta_z}\cfrac{1}{t_z}
+
\cfrac{2}{\eta_z}
\right\}
\nonumber
\\
&=&
2\cfrac{\sqrt{\eta_z+|z|}+1}{\eta_z}\:C(\beta_z)
\nonumber
\\
&=&
\cfrac{2}{m_z}
\left(
\beta_z-\frac{1}{\beta_z}
\right)
\nonumber
\\
&=& \gamma.
\label{QAZ}
\end{eqnarray}
$(\ref{Y})$ and $(\ref{QAZ})$ imply that for any $z\in \mathbb{C}^+$,
\begin{eqnarray}
2
\left\|
C(\beta_z)
\left(
\cfrac{t_z}{2}H+1+\frac{1}{2t_z}
\right)
(H-z)^{-1}
\right\|
\leq \gamma
\label{ghj}
\end{eqnarray} 
By $(\ref{5.2.2})$, $(\ref{ghj})$ and Lemma \ref{quadratic thm},  
there exists the m-sectoral operator $H_{\beta_z}$ associated with $\mathfrak{h}_{\beta_z}$ and for any $z \in \mathbb{C}^{+}$, 
\begin{eqnarray}
\|(H_{\beta_z}-z)^{-1}-(H-z)^{-1}\|\leq 
\cfrac{4\gamma}{(1-\gamma)^2}\|(H-z)^{-1}\|.
\nonumber
\end{eqnarray}
Therefore we see that
\begin{equation}
\|(H_{\beta_z}-z)^{-1}\|\leq 
\cfrac{1}{\eta_z}
\left(
\cfrac{1+\gamma}{1-\gamma}
\right)^2.
\nonumber
\end{equation}
Let $i,j \in \mathbb{N}$ with $i<j$. Then we see that
\begin{eqnarray}
|(\delta_i,(H-z)^{-1}\delta_j)|
&=&
|(M_{\beta_z}\delta_i,(H_{\beta_z}-z)^{-1}M_{\beta_z}^{-1}\delta_j)|
\nonumber
\\
&\leq&
{\beta_z}^{i-j}\|(H_{\beta}-z)^{-1}\|
\nonumber
\\
&\leq&
{\beta_z}^{-|i-j|}
\cfrac{1}{\eta_z}
\left(
\cfrac{1+\gamma}{1-\gamma}
\right)^2.
\label{kjh}
\end{eqnarray}
This implies our assertion in the case of $i<j$. In the case of $i \geq j$, let
\[
\beta_z=
\frac{1}{4}(-\gamma m_z+\sqrt{(\gamma m_z)^2+16}).
\] 
Then we can prove $(\ref{kjh})$ similarly.
\qed
\end{Proof}
Let $f\in C^{n}(\mathbb{R})$, and the norm $\opnorm{\cdot}_n$ on $C^{n}(\mathbb{R})$ be defined by
\[
\opnorm{f}_n=
\sum_{r=0}^{n}
\int_{\mathbb{R}}|f^{(r)}(x)|\langle x \rangle^{r-1}dx.
\]
The next lemma is used to estimate the intermittency function.
\begin{Lemma}\label{6.2.4}
Suppose that $f \in C^{2k+3}(\mathbb{R})$ and $\opnorm{f}_{2k+3}<\infty$.
Then there exists $C_2=C_2(k)>0$ such that for any $i,j\in \mathbb{N}$,
\[
|(\delta_i, f(H)\delta_j)|\leq C_2\opnorm{f}_{2k+3}\langle i-j \rangle^{-k},
\]
where $\langle x \rangle = (1+|x|^2)^{\frac{1}{2}}$.
\end{Lemma}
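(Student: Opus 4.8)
The plan is to represent $f(H)$ through the Helffer--Sj\"ostrand formula and to feed in the resolvent-kernel bound of Lemma \ref{6.2}. Since $H=H^{(k)}$ is self-adjoint, for an almost-analytic extension $\tilde f$ of $f$ one has
\[
f(H)=\frac{1}{\pi}\int_{\mathbb{R}^2}\bar\partial\tilde f(x+iy)\,(H-(x+iy))^{-1}\,dx\,dy,\qquad \bar\partial=\tfrac12(\partial_x+i\partial_y),
\]
so that pairing with $\delta_i,\delta_j$ gives
\[
(\delta_i,f(H)\delta_j)=\frac{1}{\pi}\int_{\mathbb{R}^2}\bar\partial\tilde f(z)\,(\delta_i,(H-z)^{-1}\delta_j)\,dx\,dy.
\]
The whole estimate thus reduces to controlling this scalar integral. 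I would build $\tilde f$ from the $y$-Taylor polynomial of $f$ of order $2k+2$, cut off at height $|\mathrm{Im}\,z|\sim\langle\mathrm{Re}\,z\rangle$. With this $x$-dependent cut-off the main part of $\bar\partial\tilde f$ is bounded by $C\,|f^{(2k+3)}(x)|\,|y|^{2k+2}$, while the cut-off part is a finite sum of terms essentially of size $|f^{(r)}(x)|\,\langle x\rangle^{r-1}$, $0\le r\le 2k+2$, supported on $\{|y|\sim\langle x\rangle\}$. This is exactly why the weights $\langle x\rangle^{r-1}$ are built into $\opnorm{\cdot}_{2k+3}$: they are tailored to absorb the powers of $\langle x\rangle$ these terms produce.

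The second ingredient is Lemma \ref{6.2}. For $z\in\mathbb{C}^+$ it gives $|(\delta_i,(H-z)^{-1}\delta_j)|\le \alpha_z(\gamma)^{-|i-j|}\eta_z^{-1}\big(\tfrac{1+\gamma}{1-\gamma}\big)^2$; for $z$ in the lower half-plane the same bound follows by taking adjoints, using $\eta_{\bar z}=\eta_z$ and the symmetry of the kernel in $i,j$. The decisive step is to convert the geometric factor into genuine exponential decay. From the identity $\alpha_z(\gamma)-\alpha_z(\gamma)^{-1}=\tfrac12\gamma m_z$ established inside the proof of Lemma \ref{6.2}, one gets $\alpha_z(\gamma)\ge 1+\tfrac14\gamma m_z$, hence $\alpha_z(\gamma)^{-|i-j|}\le e^{-c\,m_z|i-j|}$, the large-$m_z$ regime only helping. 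Finally, the definition $m_z=\eta_z/(\sqrt{\eta_z+|z|}+1)$ with $\eta_z\ge|\mathrm{Im}\,z|$ and $|z|\approx\langle x\rangle$ yields, in the decisive regime where $z$ sits just above the spectrum, the lower bound $m_z\gtrsim |y|/\sqrt{\langle x\rangle}$.

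Combining the two, I would estimate $|(\delta_i,f(H)\delta_j)|$ by $C\int|\bar\partial\tilde f(z)|\,\eta_z^{-1}e^{-c\,m_z|i-j|}\,dx\,dy$. Writing $N=|i-j|$, the main part contributes, up to constants,
\[
\int_{\mathbb{R}}|f^{(2k+3)}(x)|\left(\int_0^\infty |y|^{2k+1}e^{-cN|y|/\sqrt{\langle x\rangle}}\,dy\right)dx,
\]
where the inner Gamma integral equals $C\,\langle x\rangle^{k+1}N^{-(2k+2)}$; since $\langle x\rangle^{k+1}\le\langle x\rangle^{2k+2}$, the top-order weight of $\opnorm{f}_{2k+3}$ absorbs it and this term is $\le C\opnorm{f}_{2k+3}\,N^{-(2k+2)}$. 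The cut-off terms, supported on $\{|y|\sim\langle x\rangle\}$ where $m_z\gtrsim\sqrt{\langle x\rangle}$, carry the rapidly decaying factor $e^{-cN\sqrt{\langle x\rangle}}$; estimating $e^{-u}\le C_m u^{-m}$ and taking $m=k$ leaves exactly $\int|f^{(r)}(x)|\langle x\rangle^{r-1}\,dx\cdot N^{-k}\le C\opnorm{f}_{2k+3}N^{-k}$. Collecting all contributions yields $|(\delta_i,f(H)\delta_j)|\le C_2\opnorm{f}_{2k+3}\langle i-j\rangle^{-k}$, the exponent $k$ being the slowest rate surviving every term.

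I expect the genuine difficulty to lie in the construction and bookkeeping of the weighted almost-analytic extension: one must choose the cut-off scale so that the $\sqrt{\eta_z+|z|}$ appearing in $m_z$ is matched to the weights $\langle x\rangle^{r-1}$, control the extra terms produced when $\partial_x$ falls on the $x$-dependent cut-off, and verify the bounds uniformly in $x$ across the regime where the exponential decay $e^{-cN/\sqrt{\langle x\rangle}}$ is weak for large $|x|$. The ample smoothness $2k+3$ provides room in these estimates, while the interplay between the scale $\sqrt{\langle x\rangle}$ in $m_z$ and the weights $\langle x\rangle^{r-1}$ is what makes every term close and delivers the stated rate $\langle i-j\rangle^{-k}$.
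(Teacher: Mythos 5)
Your proposal reproduces the paper's proof in all essentials: the same almost-analytic extension built from the order-$(2k+2)$ Taylor polynomial in $y$ with the cut-off $\tau(y/\langle x\rangle)$, the same splitting of $\partial\tilde f/\partial\bar z$ into a main term on $\{|y|\lesssim\langle x\rangle\}$ and cut-off terms on $\{|y|\sim\langle x\rangle\}$ matched to the weights $\langle x\rangle^{r-1}$, and the same input from Lemma \ref{6.2} with the lower bounds $m_z\gtrsim|y|/\sqrt{\langle x\rangle}$ on the inner region and $m_z\gtrsim1$ on the cut-off region. The one step that is not literally correct is the conversion $\alpha_z(\gamma)^{-|i-j|}\le e^{-c\,m_z|i-j|}$: first, the rate coming out of $\alpha_z(\gamma)\ge 1+\frac14\gamma m_z$ is $\gamma m_z$, not $m_z$ (the paper's $\gamma_z=1/(\sqrt{\eta_z+|z|}+1)$ is not bounded below by a constant, so you must either keep the $\gamma$ or fix $\gamma$ constant and accept the constant prefactor $((1+\gamma)/(1-\gamma))^2$); second, $(1+t)^{-N}\le e^{-ctN}$ fails for large $t$ since $\log(1+t)\ll t$ there, so the ``large-$m_z$ regime only helping'' claim is backwards --- the honest bound is $(1+\tfrac14\gamma m_z)^{-N}$. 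Both defects are harmless: with a fixed $\gamma\in(0,1)$ your inner $y$-integral becomes $\int_0^\infty u^{2k+1}(1+cu)^{-N}\,du\asymp N^{-(2k+2)}$ (a Beta integral rather than a Gamma integral) and the cut-off terms decay geometrically, $(1+c)^{-N}\le C_kN^{-k}$, so every final estimate you state survives. This is in fact marginally cleaner than the paper's route, which applies $e^{-t}\le e^{-k}k^kt^{-k}$ to $t=\frac{|i-j|}{2}\log\bigl(1+(\gamma_zm_z/4)^2\bigr)$ and must then integrate the resulting singular weight $\bigl(\log(1+ct^2)\bigr)^{-k}$ against $t^{n-1}$ on $[0,2]$ --- which is exactly what forces $n=2k+2$ and yields only $N^{-k}$ for the main term, where your computation gives $N^{-(2k+2)}$.
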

\begin{Proof}\rm
Let $n \geq 0$ and 
$\tau \in C^{\infty}_0(\mathbb{R})$ such that 
$\begin{displaystyle}
\tau(x)=
\begin{cases}
1&(|x|\leq 1)\\
0&(|x|\geq 2)
\end{cases}
\end{displaystyle}$.
By Helffer-Sj\"{o}strand formula \cite[2.2 The Helffer-Sj\"{o}strand formula]{Hollfer}, we see that 
\begin{eqnarray}
f(H)=\cfrac{1}{\pi}
\int_{\mathbb{C}}
\cfrac{\partial \tilde{f}}{\partial \overline{z}}(z)(H-z)^{-1}dxdy,
\nonumber
\\
\tilde{f}(z)=
\left\{
\sum_{r=0}^{n}
f^{(r)}(x)\cfrac{(iy)^r}{r!}
\right\}
\tau
\left(\frac{y}{\langle x \rangle}
\right).
\nonumber
\end{eqnarray}
We see that
\begin{eqnarray}
\left|
\cfrac{\partial \tilde{f}}{\partial \overline{z}}(z)
\right|
&\leq&\;
\cfrac{1}{2}
\left|
f^{(n+1)}(x)
\cfrac{(iy)^n}{n!}
\tau
\left(
\cfrac{y}{\langle x \rangle}
\right)
\right|
\nonumber
\\
&
+&
\cfrac{1}{2}
\left|
\sum_{r=0}^{n}
f^{(r)}(x)\cfrac{(iy)^r}{r! \langle x \rangle}
\right|
\left|
\left(1+xy\langle x \rangle^{-1}
\right)
\tau^{\prime}
\left(
\cfrac{y}{\langle x \rangle}
\right)
\right|.
\nonumber
\end{eqnarray}
Let $A$, $B \subset \mathbb{R}^2$ be
\begin{eqnarray}
A&=&
\left\{
(x,y) \in \mathbb{R}^2
\mid
\left|
\frac{y}{\langle x \rangle}
\right|
\leq 2
\right\}
, 
B=
\left\{
(x,y) \in \mathbb{R}^2
\mid
1 \leq
\left|
\frac{y}{\langle x \rangle}
\right|
\leq 2
\right\}.
\nonumber
\end{eqnarray}
Then we see that there exists $C_2^{\prime}=C_2^{\prime}(\tau)>0$ such that 
\begin{eqnarray}
\left|
\cfrac{\partial \tilde{f}}{\partial \overline{z}}(z)
\right|
\leq
\cfrac{1}{2}
\left|
f^{(n+1)}(x)
\cfrac{(iy)^n}{n!}
\right|
\mathbbm{1}_{A}(x,y)
+
C_2^{\prime}
\left|
\sum_{r=0}^{n}
f^{(r)}(x)\cfrac{(iy)^r}{r! \langle x \rangle}
\right|
\mathbbm{1}_{B}(x,y).
\nonumber
\end{eqnarray}
Let $i, j \in \mathbb{N}$. Then we have 
\begin{eqnarray}
|(\delta_i,f(H)\delta_j)|
&
\leq
&
\cfrac{1}{2\pi}
\int_{\mathbb{C}}
\left|
f^{(n+1)}(x)
\cfrac{(iy)^n}{n!}
\right|
|
(\delta_i, (H-z)^{-1} \delta_j)
|
\mathbbm{1}_{A}(x,y)
dxdy
\nonumber
\\
&+&
\cfrac{C_2^{\prime}}{\pi}
\int_{\mathbb{C}}
\left|
\sum_{r=0}^{n}
f^{(r)}(x)\cfrac{(iy)^r}{r! \langle x \rangle}
\right|
|
(\delta_i, (H-z)^{-1} \delta_j)
|
\mathbbm{1}_{B}(x,y)
dxdy.
\label{ker17}
\end{eqnarray}
Let $\gamma_z= \cfrac{1}{\sqrt{\eta_z+|z|}+1}<1$ and 
$\alpha_z=\alpha_z(\gamma_z)$.
By Lemma $\ref{6.2}$, we see that
\begin{eqnarray}
&&
\int_{\mathbb{C}}
\left|
\sum_{r=0}^{n}
f^{(r)}(x)\cfrac{(iy)^r}{r! \langle x \rangle}
\right|
|
(\delta_i, (H-z)^{-1} \delta_j)
|
\mathbbm{1}_{B}(x,y)
dxdy
\nonumber
\\
&&\leq
\sum_{r=0}^{n}
\int_{\mathbb{C}}
\left|
f^{(r)}(x)\cfrac{(iy)^r}{r! \langle x \rangle}
\right|
\alpha_z^{-|i-j|}
\cfrac{1}{\eta_z}
\left(
\cfrac{1+\gamma_z}{1-\gamma_z}
\right)^2
\mathbbm{1}_{B}(x,y)
dxdy.
\label{ker6}
\end{eqnarray}
We estimate the lower bound of $\alpha_z$.
Suppose that $(x,y)\in B$, then $1\leq |y|\leq \eta_z$ and $|z|\leq \sqrt{2}|y|$. 
Therefore
\begin{eqnarray}
\gamma_z m_z
=\cfrac{\eta_z}{
\eta_z+|z|+1
+2\sqrt{\eta_z+|z|}
}
\geq
\cfrac{1}{2+\sqrt{2}+2\sqrt{1+\sqrt{2}}}
\label{lkj}
\end{eqnarray}
Let 
$b=\cfrac{1}{2+\sqrt{2}+2\sqrt{1+\sqrt{2}}}$. 
By the definition of $\alpha_z$, 
$(\ref{lkj})$ implies that 
\begin{eqnarray}
\alpha_z \geq 
\cfrac{1}{4}
\left(
b+\sqrt{b^2+16}
\right)
>1.
\label{ker8}
\end{eqnarray}
Let $B=\cfrac{1}{4}
\left(
b+\sqrt{b^2+16}
\right).$
We see that
\begin{eqnarray}
\cfrac{1+\gamma_z}{1-\gamma_z} 
=1+
\cfrac{2}{\sqrt{\eta_z+|z|}}
\leq1+\sqrt{2}.
\label{ker9}
\end{eqnarray}
By $(\ref{ker6})$, $(\ref{ker8})$, and $(\ref{ker9})$, we see that
\begin{eqnarray}
&&
\int_{\mathbb{C}}
\left|
\sum_{r=0}^{n}
f^{(r)}(x)\cfrac{(iy)^r}{r! \langle x \rangle}
\right|
|
(\delta_i, (H-z)^{-1} \delta_j)
|
\mathbbm{1}_{B}(x,y)
dxdy
\nonumber
\\
&&
\leq
\left(
1+\sqrt{2}
\right)^2
B^{-|i-j|}
\sum_{r=0}^{n}
\cfrac{1}{r!}
\int_{\mathbb{C}}
|f^{(r)}(x)|
\left|
\cfrac{y^{r-1}}{\langle x \rangle}
\right|
\mathbbm{1}_{B}(x,y)
dxdy
\nonumber
\\
&&
\leq
\left(
1+\sqrt{2}
\right)^2
B^{-|i-j|}
\sum_{r=0}^{n}
\cfrac{2^{r-1}}{r!}
\int_{\mathbb{C}}
|f^{(r)}(x)|
\langle x \rangle^{r-2}
\mathbbm{1}_{B}(x,y)
dxdy
\nonumber
\\
&&
\leq
\left(
1+\sqrt{2}
\right)^2
B^{-|i-j|}
\sum_{r=0}^{n}
\int_{\mathbb{R}}
|f^{(r)}(x)|
\langle x \rangle^{r-1}
dx.
\label{ker17.1}
\end{eqnarray}
By Lemma $\ref{6.2}$, we see that
\begin{eqnarray}
&&
\int_{\mathbb{C}}
\left|
f^{(n+1)}(x)
\cfrac{(iy)^n}{n!}
\right|
|
(\delta_i, (H-z)^{-1} \delta_j)
|
\mathbbm{1}_{A}(x,y)
dxdy
\nonumber
\\
&&
\leq
\int_{\mathbb{C}}
\left|
f^{(n+1)}(x)
\cfrac{(iy)^n}{n!}
\right|
\alpha_z^{-|i-j|}
\cfrac{1}{\eta_z}
\left(
\cfrac{1+\gamma_z}{1-\gamma_z}
\right)^2
\mathbbm{1}_{A}(x,y)
dxdy.
\label{kerq}
\end{eqnarray}
Note that for any $k \in \mathbb{Z}_{\geq 0}$ and $t>0$,
\begin{equation}
e^{-t}\leq  
\cfrac{e^{-k}k^k}{t^k}.
\nonumber
\end{equation}
This implies that, for $i,j \in \mathbb{N}$ with $i\neq j$,
\begin{eqnarray}
\alpha_z^{-|i-j|}
\leq
 \left(
1+
\left(
\frac{\gamma_z m_z}{4}
\right)^2
\right)^{-\frac{|i-j|}{2}}
\leq
\cfrac{e^{-k}(2k)^k}{
|i-j|^k
\left(
{\rm log}
\left(
1+
\left(
\frac{\gamma_z m_z}{4}
\right)^2
\right)
\right)^k
}.
\label{ker11}
\end{eqnarray}
Suppose that $(x,y) \in A$, then 
$|y|\leq 2\langle x\rangle$ and $|z|\leq \sqrt{5}\langle x \rangle$. We see that
\begin{eqnarray}
\gamma_z m_z
\geq
\cfrac{1}{2}\:
\cfrac{\eta_z}{\eta_z+|z|+1}
\geq
\cfrac{1}{2}\:
\cfrac{|y|}{|y|+|z|+1}
\geq 
\cfrac{3-\sqrt{5}}{8}\:
\cfrac{|y|}{\langle x \rangle}.
\label{ker12}
\end{eqnarray}
$(\ref{ker11})$ and $(\ref{ker12})$ imply
\begin{equation}
\alpha_z^{-|i-j|}
\leq
\cfrac{e^{-k}(2k)^k}{
|i-j|^k
\left(
{\rm log}
\left(
1+
\left(
\frac{3-\sqrt{5}}{32}
\;
\frac{|y|}{\langle x \rangle}
\right)^2
\right)
\right)^k
}.
\label{ker13}
\end{equation}
We see that
\begin{equation}
\cfrac{1+\gamma_z}{1-\gamma_z}\leq
1+\sqrt{\frac{2}{|y|}}.
\label{ker14}
\end{equation}
By $(\ref{kerq})$, $(\ref{ker13})$, and $(\ref{ker14})$, we see that
\begin{eqnarray}
&&
\int_{\mathbb{C}}
\left|
f^{(n+1)}(x)
\cfrac{(iy)^n}{n!}
\right|
|
(\delta_i, (H-z)^{-1} \delta_j)
|
\mathbbm{1}_{A}(x,y)
dxdy
\nonumber
\\
&&\leq
\cfrac{2e^{-k}(2k)^k}{n! |i-j|^k}
\int_{\mathbb{C}}
\cfrac{|f^{(n+1)}(x)||y|^{n-1}}{
\left(
{\rm log}
\left(
1+
\left(
\frac{3-\sqrt{5}}{32}
\;
\frac{|y|}{\langle x \rangle}
\right)^2
\right)
\right)^k
}
\left(
1+\frac{2}{|y|}
\right)
\mathbbm{1}_{A}(x,y)
dxdy
\nonumber
\\
&&\leq
\cfrac{8e^{-k}(2k)^k}{n! |i-j|^k}
\int_{\mathbb{R}}
dx|f^{(n+1)}(x)|
\int_{0}^{2\langle x \rangle}dy
\cfrac{|y|^{n-1}+|y|^{n-2}}{
\left(
{\rm log}
\left(
1+
\left(
\frac{3-\sqrt{5}}{32}
\;
\frac{|y|}{\langle x \rangle}
\right)^2
\right)
\right)^k
}
\nonumber
\\
&&\leq
\cfrac{8e^{-k}(2k)^k}{n! |i-j|^k}
\int_{\mathbb{R}}
dx|f^{(n+1)}(x)|
\langle x \rangle^{n}
\int_{0}^{2}dt
\cfrac{t^{n-1}+t^{n-2}}{
\left(
{\rm log}
\left(
1+
\left(
\frac{3-\sqrt{5}}{32}
\;
t
\right)^2
\right)
\right)^k
}.
\label{ker15}
\end{eqnarray}
If $n>2k+1$, 
\begin{eqnarray}
C_2^{\prime \prime}(n)
\coloneqq
\int_{0}^{2}dt
\cfrac{t^{n-1}+t^{n-2}}
{
\left(
{\rm log}
\left(
1+
\left(
\frac{3-\sqrt{5}}{32}
\;
t
\right)^2
\right)
\right)^k
}<\infty.
\label{ker16}
\end{eqnarray}
Let $n=2k+2$. Then $(\ref{ker17})$, $(\ref{ker17.1})$, $(\ref{ker15})$, and $(\ref{ker16})$ imply that there exists $C_2=C_2(k)>0$ such that for any $i,j \in \mathbb{N}$,
\begin{eqnarray}
&&|(\delta_i,f(H)\delta_j)|
\nonumber
\\
&&\leq
\cfrac{C_2^{\prime}}{\pi}
\left(
1+\sqrt{2}
\right)^2
B^{-|i-j|}
\opnorm{f}_{2k+2}
+
\cfrac{1}{2\pi}
C_2^{\prime\prime}(2k+2)
\cfrac{8e^{-k}(2k)^k}{(2k+2)! |i-j|^k}
\opnorm{f}_{2k+3}
\nonumber
\\
&&\leq
C_2\opnorm{f}_{2k+3}\langle i-j \rangle^{-k}.
\nonumber
\end{eqnarray}
This implies our assertion.
\qed
\end{Proof}

\section{Intermittency function and proof of the main result}
In this section, we mainly consider the distribution of $a_{\psi}^{(k)}(n,T)$ and estimate the lower and upper bounds the momentum $\langle |X|^p\rangle_{\psi}^{(k)}(T)$. 
From this, we calculate the intermittency function exactly. 
Finally, we prove Theorem $\ref{main}$ by using the intermittency function.

\subsection{Lower bound of intermittency function}
Let $k\in \mathbb{N}$, $\psi \in l^2(\mathbb{N})$, and $T > 0$. We define for $S\in2^{\mathbb{N}}$,
\begin{eqnarray}
P_{\psi}^{(k)}(S, T)= \sum_{n \in S}^{}a_{\psi}^{(k)}(n,T).
\nonumber
\end{eqnarray}
For $M \geq N \geq1$, let subsets $\{N \sim M \}$ and $\{M \sim\infty \}$ of $\mathbb{N}$ be 
\begin{eqnarray}
\{N \sim M \}&=&\{n\in \mathbb{N} \mid N \leq  n\leq M\},
\nonumber
\\ 
\{M \sim\infty \}&=&\{n\in \mathbb{N} \mid n\geq M\}.
\nonumber
\end{eqnarray}

\begin{Lemma}\label{6.1.1}
Let $T>0$ and $\epsilon>0$. 
Suppose that $B\in \mathcal{B}^1$ and $A \coloneqq \mu_{\psi}^{(k)}(B)>0$. 
Let 
\begin{eqnarray}
M_T&=&
\cfrac{A^2}{16J^{(k)}_{\psi}(T^{-1},B)},
\nonumber
\\
J_{\psi}^{(k)}(\epsilon, B)&=& \int_{B}\mu_{\psi}^{(k)}(dx)
\int_{\mathbb{R}}\mu_{\psi}^{(k)}(dy)
\cfrac{\epsilon^2}{(x-y)^2+\epsilon^2}.
\nonumber
\end{eqnarray}
Then for any $T>0$
\[
P_{\psi}^{(k)}(\:\{M_T \sim \infty\}, T)\geq \cfrac{A}{2}>0.
\]
\end{Lemma}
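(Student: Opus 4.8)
The plan is to reduce to the spectral subspace of $B$ and then exploit that a state with non-atomic spectral measure must delocalise, the amount of spreading being governed by the correlation integral $J^{(k)}_\psi$. Put $\phi=E^{(k)}(B)\psi$, so that $\mu^{(k)}_\phi=\mathbbm{1}_B\,\mu^{(k)}_\psi$ and $\|\phi\|^2=A$, and work with the probabilities $a^{(k)}_\phi(n,T)$ of the evolution of $\phi$. It is convenient to set $\epsilon=1/T$ and to regard, for each $n$, the function $V_n(t)=(\delta_n,e^{-itH^{(k)}}\phi)$ as an element of $\mathcal K:=L^2\big((1/T)e^{-t/T}\,dt\big)$, so that $a^{(k)}_\phi(n,T)=\|V_n\|_{\mathcal K}^2$ and, by unitarity, $\sum_n a^{(k)}_\phi(n,T)=\tfrac1T\int_0^\infty e^{-t/T}\|e^{-itH^{(k)}}\phi\|^2\,dt=A$. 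Hence it suffices to show that the head $\sum_{n=1}^{M_T-1}a^{(k)}_\phi(n,T)$ is at most $A/2$.

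First I would record the return identity. Writing $F(t)=(\phi,e^{-itH^{(k)}}\phi)=\int_B e^{-it\lambda}\,d\mu^{(k)}_\psi(\lambda)$ and using $\tfrac1T\int_0^\infty e^{-t/T}e^{-it(\lambda-\mu)}\,dt=(1+iT(\lambda-\mu))^{-1}$, Fubini together with the cancellation of the antisymmetric part over the symmetric domain $B\times B$ gives
\[
\|F\|_{\mathcal K}^2=\frac1T\int_0^\infty e^{-t/T}|F(t)|^2\,dt=\int_B\!\int_B\frac{\epsilon^2}{(\lambda-\mu)^2+\epsilon^2}\,d\mu^{(k)}_\psi(\lambda)\,d\mu^{(k)}_\psi(\mu)\le J^{(k)}_\psi(1/T,B),
\]
the inequality coming from enlarging the inner integral from $B$ to $\mathbb R$.

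The key step, and the engine of the proof, is the bound $\sum_n a^{(k)}_\phi(n,T)^2\le\|F\|_{\mathcal K}^2$. Let $\Theta$ be the positive operator on $\mathcal K$ defined by $\Theta u=\sum_n (V_n,u)_{\mathcal K}\,V_n$. Cauchy--Schwarz gives $\sum_n a^{(k)}_\phi(n,T)^2=\sum_n\|V_n\|_{\mathcal K}^4\le\sum_{n,m}|(V_n,V_m)_{\mathcal K}|^2=\operatorname{tr}\Theta^2$. Since $\sum_n\overline{V_n(t)}\,V_n(s)=(e^{-itH^{(k)}}\phi,e^{-isH^{(k)}}\phi)=\overline{F(t-s)}$, expanding the trace yields
\[
\operatorname{tr}\Theta^2=\frac1{T^2}\int_0^\infty\!\int_0^\infty e^{-(t+s)/T}|F(t-s)|^2\,dt\,ds,
\]
and the substitution $\tau=t-s$ (with $|F|^2$ even) collapses this exactly to $\tfrac1T\int_0^\infty e^{-\tau/T}|F(\tau)|^2\,d\tau=\|F\|_{\mathcal K}^2$. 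Combined with the previous step, $\sum_n a^{(k)}_\phi(n,T)^2\le J^{(k)}_\psi(1/T,B)$.

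To finish, Cauchy--Schwarz in the index $n$ and the definition $M_T=A^2/\big(16\,J^{(k)}_\psi(1/T,B)\big)$ give
\[
\sum_{n=1}^{M_T-1}a^{(k)}_\phi(n,T)\le\Big((M_T-1)\sum_n a^{(k)}_\phi(n,T)^2\Big)^{1/2}\le\big(M_T\,J^{(k)}_\psi(1/T,B)\big)^{1/2}=\frac A4,
\]
so $\sum_{n\ge M_T}a^{(k)}_\phi(n,T)=A-\sum_{n<M_T}a^{(k)}_\phi(n,T)\ge \tfrac34A\ge\tfrac A2$, which is the assertion for $\phi$. The one point requiring care, and what I expect to be the main obstacle, is the passage from $a^{(k)}_\phi$ back to $a^{(k)}_\psi$: position-space truncations do not commute with $E^{(k)}(B)$, so the tail of $\psi$ need not dominate that of $\phi$ termwise. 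I would either invoke that the lemma is applied with $\psi$ spectrally supported in $B$ (so $a^{(k)}_\psi=a^{(k)}_\phi$) or control directly the cross terms between $E^{(k)}(B)\psi$ and $E^{(k)}(\mathbb R\setminus B)\psi$; the trace identity $\operatorname{tr}\Theta^2=\|F\|_{\mathcal K}^2$ is what makes the head bound sharp enough to match the constant in $M_T$.
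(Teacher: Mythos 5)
Your argument for the vector $\phi=E^{(k)}(B)\psi$ is correct, and it is a genuinely different route from the paper's: you control the participation number of the time-averaged position distribution by the smeared return probability, via the identity $\sum_{n,m}|(V_n,V_m)_{\mathcal K}|^2=\tfrac1T\int_0^\infty e^{-\tau/T}|F(\tau)|^2\,d\tau\le J^{(k)}_\psi(T^{-1},B)$, and then Cauchy--Schwarz in $n$ gives $\sum_{n<M_T}a^{(k)}_\phi(n,T)\le\sqrt{M_TJ^{(k)}_\psi}=A/4$. The computation of $\operatorname{tr}\Theta^2$ (the substitution $\tau=t-s$, $\sigma=t+s$ against the exponential weight) is right, as is the cancellation of the antisymmetric part of $(1+iT(\lambda-\mu))^{-1}$ over the symmetric domain $B\times B$; in fact you get the stronger tail mass $\tfrac34 A$.

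But the lemma is a statement about $a^{(k)}_\psi$, not $a^{(k)}_\phi$, and the obstacle you flag at the end is a genuine gap, not a formality. The naive bridge fails quantitatively: writing $\chi=\psi-\phi$, one gets $\sum_{n\ge M_T}a^{(k)}_\psi(n,T)\ge\tfrac34A-2\sum_{n<M_T}|(V_n^{\phi},V_n^{\chi})_{\mathcal K}|$, and Cauchy--Schwarz in $n$ only bounds the cross sum by $\bigl(\sum_{n<M_T}a^{(k)}_\phi\bigr)^{1/2}\|\chi\|\le\tfrac{\sqrt A}{2}\|\chi\|$, which swamps $A$ whenever $\|\chi\|^2\gg A$. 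Your escape (a) --- assuming $\psi$ is spectrally supported in $B$ --- does happen to hold in the paper's sole application (Lemma \ref{618} uses $\psi=f(H^{(k)})\delta_1$ with $f\in C^{\infty}_0(B_\nu)$ and $B=B_\nu$), but it does not prove the lemma as stated. The paper instead estimates the cross term $D(M,T)=\sum_{n\le M}\tfrac1T\int_0^\infty e^{-t/T}\psi(n,t)\overline{\rho(n,t)}\,dt$ directly in the spectral representation, $D(M,T)=\int_{\mathbb R}\mu_\psi(dx)\int_B\mu_\psi(dy)\,(1+iT(x-y))^{-1}\sum_{n\le M}\overline{U_\psi\delta_n(x)}U_\psi\delta_n(y)$, and a Cauchy--Schwarz in the spectral variables splits this into $\sqrt{J^{(k)}_\psi}$ (from the Poisson kernel over $B$) times $\bigl(\sum_{n\le M}\|U_\psi\delta_n\|^2_{L^2(\mu_\psi)}\bigr)^{1/2}\le\sqrt M$ (from the full-$\psi$ side, using only $\|U_\psi\delta_n\|\le1$). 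This yields $|D(M_T-1,T)|\le\sqrt{M_TJ^{(k)}_\psi}=A/4$ with no dependence on $\|\chi\|$, which is exactly the estimate your $\ell^2$-in-$n$ framework does not produce; closing your gap requires this spectral Cauchy--Schwarz or an equivalent substitute.
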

\begin{Proof}\rm
We denote $P_{\psi}^{(k)}$, $\mu_{\psi}^{(k)}$, and $J_{\psi}^{(k)}$ by $P_{\psi}$, $ \mu_{\psi}$, and $J_{\psi}$, respectively for simplicity of notation.
Let $\rho = E^{(k)}(B)\psi$ and $\chi = \psi - \rho$. 
Note that $\rho \neq 0$. We see that
\begin{eqnarray}
P_{\psi}(\{1 \sim M\}, T)&=&
\sum_{n=1}^{M}\cfrac{1}{T}\int_{\mathbb{R}}dt\:e^{-\frac{t}{T}}|\chi(n,t)+\rho(n,t)|^2
\nonumber
\\
&=&
P_{\chi}(\{1 \sim M\},T)+
P_{\rho}(\{1 \sim M\},T)
+2\sum_{n=1}^{M}\cfrac{1}{T}\int_{\mathbb{R}}dt\:e^{-\frac{t}{T}}
{\rm Re}(\chi(n,t) \overline{\rho(n,t)})
\nonumber
\\
&=&
P_{\chi}(\{1 \sim M\},T)-
P_{\rho}(\{1 \sim M\},T)
+2\sum_{n=1}^{M}\cfrac{1}{T}\int_{\mathbb{R}}dt\:e^{-\frac{t}{T}}
{\rm Re}(\psi(n,t)\overline{\rho(n,t)})
\nonumber
\\
&\leq&
P_{\chi}(\{1 \sim M\},T)
+2\sum_{n=1}^{M}\cfrac{1}{T}\int_{\mathbb{R}}dt\:e^{-\frac{t}{T}}
{\rm Re}
(\psi(n,t)\overline{\rho(n,t)}).
\label{OIU}
\end{eqnarray}
Since $P_{\psi}(\{M \sim \infty \},T)=\|\psi\|^2-P_{\psi}(\{1,M-1\},T)$ and $\|\psi\|^2=\|\rho\|^2+\|\chi\|^2$,
$( \ref{OIU})$ implies that
\begin{equation}
P_{\psi}(\{M \sim \infty \},T)\geq
\|\rho\|^2-2|D(M-1,T)|,
\label{7.2}
\end{equation}
where 
\begin{equation}
D(M,T)=
\sum_{n=1}^{M}\cfrac{1}{T}\int_{\mathbb{R}}dt\:e^{-\frac{t}{T}}
\psi(n,t)\overline{\rho(n,t)}
=
\sum_{n=1}^{M}\cfrac{1}{T}\int_{\mathbb{R}}dt\:e^{-\frac{t}{T}}
(\delta_n,\psi(t))(\rho(t),\delta_n).
\nonumber
\end{equation}
Since $U_{\psi}:\mathcal{H}_{\psi}\rightarrow L^2(\mathbb{R}, d\mu_{\psi}^{(k)})$ is unitary, by Schwarz inequality we see that
\begin{eqnarray}
&&|D(M,T)|
\nonumber
\\
&&=
\left|
\sum_{n=1}^{M}
\cfrac{1}{T}\int_{\mathbb{R}}dt\; e^{-\frac{t}{T}}
\int_{\mathbb{R}}\mu_{\psi}(dx)
\int_{B}\mu_{\psi}(dy)
e^{-it(x-y)}
\overline{U_{\psi}\delta_n(x)}U_{\psi}\delta_n(y)
\right|
\nonumber
\\
&&=
\left|
\int_{\mathbb{R}}\mu_{\psi}(dx)
\int_{B}\mu_{\psi}(dy)\:
\cfrac{1-iT(x-y)}{1+T^2(x-y)^2}
\:
\sum_{n=1}^{M}
\overline{U_{\psi}\delta_n(x)}U_{\psi}\delta_n(y)
\right|
\nonumber
\\
&&\leq
\int_{\mathbb{R}}\mu_{\psi}(dx)
\int_{B}\mu_{\psi}(dy)\:
\cfrac{1}{\sqrt{1+T^2(x-y)^2}}
\left|
\sum_{n=1}^{M}
\overline{U_{\psi}\delta_n(x)}U_{\psi}\delta_n(y)
\right|
\nonumber
\\
&&\leq
\int_{B}\mu_{\psi}(dy)
\left(
\int_{\mathbb{R}}
\cfrac{\mu_{\psi}(dx)}{1+T^2(x-y)^2}
\right)^{\frac{1}{2}}
\left(
\int_{\mathbb{R}}
\mu_{\psi}(dx)
\left|
\sum_{n=1}^{M}
\overline{U_{\psi}\delta_n(x)}U_{\psi}\delta_n(y)
\right|^2
\right)^{\frac{1}{2}}.
\nonumber
\\
&&\leq
\left(
\int_{B}\mu_{\psi}(dy)
\int_{\mathbb{R}}
\cfrac{\mu_{\psi}(dx)}{1+T^2(x-y)^2}
\right)^{\frac{1}{2}}
\left(
\int_{B}\mu_{\psi}(dy)
\int_{\mathbb{R}}
\mu_{\psi}(dx)
\left|
\sum_{n=1}^{M}
\overline{U_{\psi}\delta_n(x)}U_{\psi}\delta_n(y)
\right|^2
\right)^{\frac{1}{2}}.
\label{7.3}
\end{eqnarray}
Since $U_{\psi}:\mathcal{H}_{\psi}\rightarrow L^2(\mathbb{R}, d\mu_{\psi}^{(k)})$ is unitary, we have
\begin{eqnarray}
\int_{\mathbb{R}}
\mu_{\psi}(dx)
\left|
\sum_{n=1}^{M}
U_{\psi}\delta_n(x)
\overline{
U_{\psi}\delta_n(y)}
\right|^2
&=&
\int_{\mathbb{R}}
\mu_{\psi}(dx)
\left|
U_{\psi}\left(
\sum_{n=1}^{M}
\overline{
U_{\psi}\delta_n(y)}
\delta_n
\right)
(x)
\right|^2
\nonumber
\\
&=&
\left\|
\sum_{n=1}^{M}
\overline{
U_{\psi}\delta_n(y)}
\delta_n
\right\|^2_{l^2(\mathbb{N})}
\nonumber
\\
&=&
\sum_{n=1}^{M}
\left|
U_{\psi}\delta_n(y)
\right|^2.
\label{7.4}
\end{eqnarray}
By $(\ref{7.3})$ and $(\ref{7.4})$, we see that
\begin{eqnarray}
|D(M-1,T)|^2&\leq&
\left(
\int_{B}\mu_{\psi}(dy)
\int_{\mathbb{R}}
\cfrac{\mu_{\psi}(dx)}{1+T^2(x-y)^2}
\right)
\left(
\sum_{n=1}^{M}
\int_{B}\mu_{\psi}(dy)
\left|
U_{\psi}\delta_n(y)
\right|^2
\right)
\nonumber
\\
&\leq&
J_{\psi}(T^{-1},B)\sum_{n=1}^{M}
\|U_{\psi}\delta_n\|_{l^2}^2
\nonumber
\\
&\leq&
MJ_{\psi}(T^{-1},B)
\nonumber
\end{eqnarray}
Let $M=M_T$. Then 
\begin{equation}
|D(M_T-1,T)|\leq
\sqrt{ M_TJ_{\psi}(T^{-1},B)}
=
\cfrac{\|\rho\|^2}{4}.
\label{7.6}
\end{equation}
By $(\ref{7.2})$ and $(\ref{7.6})$, we obtain that
\begin{equation}
P_{\psi}(\{ M_T\sim \infty\}, T) \geq \frac{\|\rho \|^2}{2}=\frac{A}{2}.
\nonumber
\end{equation}
This implies our assertion.
\qed
\end{Proof}
For $\psi \in l^2(\mathbb{N})$,
let an analytic function $m_{\psi}^{(k)}: \mathbb{C}^+ \rightarrow \mathbb{C}^+$ be defined by
\begin{eqnarray}
m_{\psi}^{(k)}(z) = \int_{\mathbb{R}}\cfrac{\mu_{\psi}^{(k)}(d\lambda)}{\lambda-z}
=(\psi, (H^{(k)}-z)^{-1}\psi).
\nonumber
\end{eqnarray}
Let $\epsilon>0$ and $B\in \mathcal{B}^1$. 
We define
\[
I_{\psi}^{(k)}(\epsilon,B)=\epsilon
\int_{B}dE |{\rm Im}\: m_{\psi}^{(k)}(E+i\epsilon)|^2.
\]
\begin{Lemma}\label{pro612}
Let $B=[a,b]\subset \mathbb{R}$. Then there exists $C_3=C_3(a,b)>0$ such that 
for any $\epsilon \in(0,1)$
\begin{equation}
J_{\psi}^{(k)}(\epsilon, B)\leq C_3 I_{\psi}^{(k)}(\epsilon,B).
\label{RDX}
\end{equation}
\end{Lemma}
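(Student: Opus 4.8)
The plan is to reduce the stated inequality to a pointwise comparison between two integral kernels and then integrate against the nonnegative product measure $\mu_\psi^{(k)}(dx)\,\mu_\psi^{(k)}(dy)$. Write $\mu=\mu_\psi^{(k)}$ for brevity. The starting point is that the imaginary part of the Borel transform is the Poisson integral of $\mu$: for $E\in\mathbb{R}$ and $\epsilon>0$,
\[
{\rm Im}\,m_\psi^{(k)}(E+i\epsilon)=\int_{\mathbb{R}}\frac{\epsilon}{(\lambda-E)^2+\epsilon^2}\,\mu(d\lambda)\ge 0.
\]
Squaring this and applying Fubini (all integrands are nonnegative), I would rewrite $I_\psi^{(k)}$ as a double integral against $\mu\otimes\mu$,
\[
I_\psi^{(k)}(\epsilon,B)=\int_{\mathbb{R}}\mu(dx)\int_{\mathbb{R}}\mu(dy)\,K_I(x,y),\qquad K_I(x,y)=\epsilon^3\int_B\frac{dE}{((x-E)^2+\epsilon^2)((y-E)^2+\epsilon^2)},
\]
while $J_\psi^{(k)}$ is already of this form,
\[
J_\psi^{(k)}(\epsilon,B)=\int_{\mathbb{R}}\mu(dx)\int_{\mathbb{R}}\mu(dy)\,K_J(x,y),\qquad K_J(x,y)=\1_B(x)\,\frac{\epsilon^2}{(x-y)^2+\epsilon^2}.
\]
Thus $(\ref{RDX})$ follows at once from the pointwise bound $K_J(x,y)\le C_3\,K_I(x,y)$ for all $x,y\in\mathbb{R}$; since $K_J$ vanishes unless $x\in B$, it suffices to treat $x\in[a,b]$ and arbitrary $y$.

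For the kernel inequality I would bound $K_I$ from below by discarding all of the $E$-integral except a neighborhood of the peak of the first Lorentzian factor. Since $x\in[a,b]$, the set $[x-\epsilon,x+\epsilon]\cap[a,b]$ has length at least $\min(\epsilon,b-a)$, and for $E$ in this set one has $(x-E)^2+\epsilon^2\le 2\epsilon^2$ together with $(y-E)^2+\epsilon^2\le 3((x-y)^2+\epsilon^2)$ by the triangle inequality. Restricting the integral defining $K_I$ to this set therefore gives
\[
K_I(x,y)\ge \frac{\epsilon\,\min(\epsilon,b-a)}{6}\cdot\frac{1}{(x-y)^2+\epsilon^2}.
\]
Comparing with $K_J(x,y)=\1_B(x)\,\epsilon^2/((x-y)^2+\epsilon^2)$ yields $K_J(x,y)\le (6\epsilon/\min(\epsilon,b-a))\,K_I(x,y)$ for $x\in B$; since $\epsilon\in(0,1)$, the factor $6\epsilon/\min(\epsilon,b-a)$ is bounded by $C_3:=6/\min(1,b-a)$ in both cases $\epsilon\le b-a$ and $\epsilon>b-a$, and this depends only on $a$ and $b$. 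Integrating the pointwise bound against $\mu\otimes\mu$ then proves $(\ref{RDX})$.

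The only delicate point is the boundary/scale regime: when $x$ lies near an endpoint of $B$, or when $\epsilon$ is comparable to or larger than the length $b-a$, only a fraction of the Lorentzian peak at $E=x$ survives inside $B$, and this is exactly what produces the factor $\min(\epsilon,b-a)$ and hence the dependence of $C_3$ on $a,b$. Everything else—the far-field region $|x-y|\gg\epsilon$ and the near-diagonal region—is handled uniformly by the same lower bound, since the restricted $E$-integral always captures the contribution of the first factor's peak. I expect no genuine obstacle beyond bookkeeping the constant through the two cases $\epsilon\le b-a$ and $\epsilon>b-a$.
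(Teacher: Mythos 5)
Your proof is correct, and it follows the same overall reduction as the paper: both arguments use Fubini to write $I_{\psi}^{(k)}(\epsilon,B)$ as a double integral against $\mu\otimes\mu$ and then reduce the lemma to the pointwise kernel inequality
\[
\1_B(x)\,\frac{\epsilon^2}{(x-y)^2+\epsilon^2}\;\leq\;C_3\,\epsilon^3\int_B\frac{dE}{((x-E)^2+\epsilon^2)((y-E)^2+\epsilon^2)}.
\]
Where you genuinely diverge is in how this kernel bound is established. The paper rescales via $t=(E-x)/\epsilon$, keeps the full window $[\frac{a-x}{\epsilon},\frac{b-x}{\epsilon}]\supset[a-x,b-x]$, and then splits into the cases $|s|\leq 1$ and $|s|\geq 1$ with $s=(x-y)/\epsilon$, using in the second case the algebraic identity $\frac{1}{(1+t^2)(1+(|t|+|s|)^2)}=\bigl(\frac{1}{1+t^2}-\frac{1}{1+(|t|+|s|)^2}\bigr)((|t|+|s|)^2-t^2)^{-1}$ to extract the factor $(1+s^2)^{-1}$. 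You instead discard everything except the $\epsilon$-neighbourhood of the peak $E=x$ inside $B$ and bound both Lorentzian factors there by their suprema; the case analysis is pushed entirely into the harmless constant $\min(\epsilon,b-a)$. Your route is more elementary, avoids the change of variables and the identity, and produces the explicit constant $C_3=6/\min(1,b-a)$; the paper's version is no sharper, just organized around the dimensionless variable $s$. Both correctly handle the only delicate regime (endpoint $x$ and $\epsilon$ comparable to $b-a$), so there is no gap.
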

\begin{Proof}\rm
We denote $J_{\psi}^{(k)}$, $I_{\psi}^{(k)}$, and $\mu_{\psi}^{(k)}$ by 
$J_{\psi}$, $I_{\psi}$, and $\mu_{\psi}$, respectively for simplicity of notation. We see that
\begin{eqnarray}
I_{\psi}(\epsilon,B)
&=&
\epsilon^3\int_{B}dE
\left(
\int_{\mathbb{R}}
\cfrac{\mu_{\psi}(dx)}{\epsilon^2+(E-x)^2}
\right)^2
\nonumber
\\
&=&
\epsilon^3\int_{B}dE
\int_{\mathbb{R}}
\cfrac{\mu_{\psi}(dx)}{\epsilon^2+(E-x)^2}
\int_{\mathbb{R}}
\cfrac{\mu_{\psi}(dy)}{\epsilon^2+(E-y)^2}
\nonumber
\\
&\geq&
\int_{B}\mu_{\psi}(dx)
\int_{\mathbb{R}}\mu_{\psi}(dy)\:
\epsilon^3
\int_B
\cfrac{dE}{(\epsilon^2+(E-x)^2)(\epsilon^2+(E-y)^2)}.
\label{7.7}
\end{eqnarray}
Let $s= \frac{x-y}{\epsilon}$. Since 
$x \in B=[a,b]$ and $0<\epsilon<1$, we have
\begin{eqnarray}
\epsilon^3
\int_B
\cfrac{dE}{(\epsilon^2+(E-x)^2)(\epsilon^2+(E-y)^2)}
&=&
\int_{\frac{a-x}{\epsilon}}^{\frac{b-x}{\epsilon}}\cfrac{dt}{(1+t^2)(1+(t+s)^2)}
\nonumber
\\
&\geq&
\int_{a-x}^{b-x}\cfrac{dt}{(1+t^2)(1+(|t|+|s|)^2)}.
\nonumber
\end{eqnarray}
If $|s|\leq1$, then
\begin{eqnarray}
\int_{a-x}^{b-x}\cfrac{dt}{(1+t^2)(1+(|t|+|s|)^2)}
&\geq&
\int_{a-x}^{b-x}\cfrac{dt}{(1+t^2)(1+(|t|+1)^2)}.
\label{TY}
\end{eqnarray}
If $|s|\geq 1$, then there exists $C_3^{\prime}=C_3^{\prime}(a,b)>0$ such that for any $x\in B$,
\begin{eqnarray}
\int_{a-x}^{b-x}\cfrac{dt}{(1+t^2)(1+(|t|+|s|)^2)}
&=&
\int_{a-x}^{b-x}
dt\:
\left(
\cfrac{1}{1+t^2}-
\cfrac{1}{1+(|t|+|s|)^2}
\right)
((|t|+|s|)^2-t^2)^{-1}
\nonumber
\\
&\geq& 
\int_{a-x}^{b-x}
dt\:
\left(
\cfrac{1}{1+t^2}-
\cfrac{1}{1+(|t|+1)^2}
\right)
\cfrac{C_3^{\prime}}{1+s^2}
\label{7.10}
\end{eqnarray}
By $(\ref{TY})$ and $(\ref{7.10})$, there exists $C_3=C_3(a,b)>0$ such that for any $x\in B$ and any $y\in \mathbb{R}$, 
\begin{equation}
\epsilon^3
\int_B
\cfrac{dE}{(\epsilon^2+(E-x)^2)(\epsilon^2+(E-y)^2)}
\geq
\cfrac{C_3}{1+s^2},
\qquad s=\frac{x-y}{\epsilon} .
\label{7.11}
\end{equation}
$(\ref{7.7})$ and $(\ref{7.11})$ imply our assertion.
\qed
\end{Proof}

\begin{Definition}
Let $f:\mathbb{Z}_{\geq 0} \rightarrow \mathbb{C}$ and 
$n\in \mathbb{N}$. 
Let $\left(\tilde{H}^{(k)}f \right) (n)$ be defined by
\[
\left( \tilde{H}^{(k)}f \right)(n)
 = -a_k(n)f({n+1})+d_k(n)f(n)-a_k({n-1})f({n-1}),
\]
where $a_k(0)=1$.
\end{Definition}
Let $z \in \mathbb{C}^+$, and $n,m \in \mathbb{N}$ such that  $n\geq m$. We define 
\begin{eqnarray}
T_z(n)
&=&
\begin{cases}
\left(
\begin{array}{cc}
0&1\\
-\sqrt{\frac{g_{n-1}}{g_n}}&\frac{g_n+1-z}{\sqrt{g_n}}\\
\end{array}
\right)
&
(n \geq 1),
\\
\left(
\begin{array}{cc}
0&1\\
-1&1-z\\
\end{array}
\right)
&(n=0),
\end{cases}
\nonumber
\\
S_z(n,m)&=& T_z(n)T_z(n-1)\cdots T_z(m),
\nonumber
\\
S_z(n)&=& S_z(n,0).
\nonumber 
\end{eqnarray}
Let $f : \mathbb{Z}_{\geq 0}\rightarrow \mathbb{C}$ and $z\in \mathbb{C}^+$.
Suppose that $\left( \tilde{H}^{(k)}f \right) (n)=zf(n)$ for each $n \in \mathbb{N}$. Then 
\begin{eqnarray}
\left(
\begin{array}{c}
f(n)\\
f(n+1)
\end{array}
\right)
&=&
T_z(n+N(k)-1)
\left(
\begin{array}{c}
f(n-1)\\
f(n)
\end{array}
\right)
\nonumber
\\
&=&
S_z(n+N(k)-1, N(k))
\left(
\begin{array}{c}
f(0)\\
f(1)
\end{array}
\right).
\nonumber
\end{eqnarray}
\begin{Lemma}\label{7.1.4}
Let $K>0$ and $z=E+i \epsilon$ with $0<E<4$ and $\epsilon>0$. 
Then there exists $C_4=C_4(E, K)>0$ such that 
\begin{enumerate}[$(1)$]
\item
if $L_m +1\leq n < L_{m+1}$ and $n \epsilon<K$, then
\begin{equation}
\|S_z(n)^{-1}\|\leq C_4^{m+1}\prod_{j=1}^{m}L_j^{\frac{1-\Gamma}{2\Gamma}},
\nonumber
\end{equation}
\item
if $n \leq L_{m}$ and $n \epsilon<K$, then
\begin{equation}\label{7.15}
\|S_z(n)^{-1}\|\leq C_4^{m}\prod_{j=1}^{m-1}L_j^{\frac{1-\Gamma}{2\Gamma}}.
\nonumber
\end{equation}
\end{enumerate}
\end{Lemma}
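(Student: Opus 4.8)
The plan is to estimate $\|S_z(n)^{-1}\|$ by factoring the ordered product
$S_z(n)^{-1}=T_z(0)^{-1}T_z(1)^{-1}\cdots T_z(n)^{-1}$
into blocks dictated by the sparse structure of Definition~\ref{sparse}. Since $L_m=2^{m^m}$ grows extremely fast, the sites at which $g_j>1$, namely $j\in\{L_1,L_2,\dots\}$, are isolated: between consecutive sparse sites $g_j=1$, so the intervening transfer matrices all equal the free matrix $T^{\mathrm{free}}_z=\begin{pmatrix}0&1\\-1&2-z\end{pmatrix}$. I would therefore write $\{0,1,\dots,n\}$ as an alternation of long \emph{free} stretches and single \emph{barrier} crossings located at the $L_j\le n$, bound each stretch and each barrier separately, and multiply. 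The two cases of the lemma differ only in whether the barrier at $L_m$ is crossed completely ($n\ge L_m+1$, case (1)) or not ($n\le L_m$, case (2)), which is exactly what governs whether the last factor $L_m^{(1-\Gamma)/(2\Gamma)}$ is present.

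For a free stretch the decisive fact is that $0<E<4$ places $E$ in the spectrum of the free Laplacian, so $T^{\mathrm{free}}_E$ is elliptic: its eigenvalues are $e^{\pm i\theta}$ with $2\cos\theta=2-E$, and there is an invertible $Q_E$, depending continuously on $E\in(0,4)$, such that $Q_E^{-1}T^{\mathrm{free}}_EQ_E$ is a rotation. Hence a product of $\ell$ consecutive free matrices at $z=E$ has norm at most $\|Q_E\|\,\|Q_E^{-1}\|$, uniformly in $\ell$. To restore $z=E+i\epsilon$ I would write $T^{\mathrm{free}}_{E+i\epsilon}=T^{\mathrm{free}}_E-i\epsilon\begin{pmatrix}0&0\\0&1\end{pmatrix}$ and estimate the perturbed product by $\prod(1+C_E\epsilon)\le e^{C_E\ell\epsilon}$; since every free stretch has length $\ell\le n$ and $n\epsilon<K$, each factor is at most $e^{C_EK}$. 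This is the step in which both hypotheses $0<E<4$ and $n\epsilon<K$ are used, and it produces the constant $C_4=C_4(E,K)$ (degenerating as $E\to0^+,4^-$, where the eigenvectors of $T^{\mathrm{free}}_E$ coalesce). The single extra factor $\|T_z(0)^{-1}\|$ from the root step is harmless. As there are at most $m+1$ free stretches below $n$, they contribute at most $C_4^{\,m+1}$ (at most $m$ in case (2)).

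For a barrier I would use the determinant bookkeeping. A complete crossing of $L_j$ is the two-step product $B_j:=T_z(L_j+1)T_z(L_j)$; using $g_{L_j-1}=g_{L_j+1}=1$ and $g_{L_j}=[L_j^{(1-\Gamma)/\Gamma}]$ one finds $\det B_j=\sqrt{g_{L_j-1}/g_{L_j}}\cdot\sqrt{g_{L_j}/g_{L_j+1}}=1$ and, since the surviving entries are of size $g_{L_j}/\sqrt{g_{L_j}}$ with coefficients controlled by $|z|\le 4+K$, one gets $\|B_j\|\asymp\sqrt{g_{L_j}}\asymp L_j^{(1-\Gamma)/(2\Gamma)}$, whence $\|B_j^{-1}\|=\|B_j\|\asymp L_j^{(1-\Gamma)/(2\Gamma)}$ as well. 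Thus each completely crossed barrier contributes a factor comparable to $L_j^{(1-\Gamma)/(2\Gamma)}$; collecting the free and barrier factors yields $C_4^{\,m+1}\prod_{j=1}^{m}L_j^{(1-\Gamma)/(2\Gamma)}$ in case (1), while in case (2) only $L_1,\dots,L_{m-1}$ are crossed completely, giving the stated bound.

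The step I expect to be the genuine obstacle is the sparse site itself. Taken alone, the half-step inverse has $\|T_z(L_j)^{-1}\|\asymp g_{L_j}$, which is the \emph{square} of the admissible factor; the exponent $L_j^{(1-\Gamma)/(2\Gamma)}=\sqrt{g_{L_j}}$ survives only because the two half-steps $T_z(L_j)$ and $T_z(L_j+1)$ are kept together and $\det B_j=1$. The grouping of the product must therefore be arranged so that no branch point is ever split between two blocks. The genuinely delicate situation is case (2) with $n=L_m$, where the barrier at $L_m$ is crossed only halfway: a naive product bound over-counts by a factor $\sqrt{g_{L_m}}$, and $\|T_z(L_m)^{-1}\|\asymp g_{L_m}$ can be reconciled with the claimed $\prod_{j=1}^{m-1}$ only by controlling this half-crossing jointly with the adjacent evolution through the determinant identity rather than factor by factor. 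Establishing that this combination does not introduce a new power of $L_m$ is the crux on which the stated constant rests, and is the step I would scrutinize most carefully; the remaining ingredients—the explicit norm of $B_j$, the continuity of $Q_E$, and the inequality $(1+C\epsilon)^\ell\le e^{C\ell\epsilon}$—are routine.
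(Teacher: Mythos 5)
Your decomposition is exactly the paper's: factor $S_z(n)$ into free stretches $R^{\ell}$ with $R=\begin{pmatrix}0&1\\-1&2-z\end{pmatrix}$ and complete barrier crossings $S_z(L_j+1,L_j)=T_z(L_j+1)T_z(L_j)$, conjugate the elliptic matrix $R|_{\epsilon=0}$ to a rotation using $0<E<4$ so that $\|R^{\pm\ell}\|\le C(E)(1+C(E)\epsilon)^{\ell}\le C(E,K)$ under $\ell\epsilon\le n\epsilon<K$, and bound each completely crossed barrier by $\|S_z(L_j+1,L_j)^{-1}\|\le C'[L_j^{\frac{1-\Gamma}{\Gamma}}]^{1/2}$. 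The paper asserts this last bound without the determinant computation you supply, but it is the same estimate, and for case $(1)$ — and for case $(2)$ with $n<L_m$, which is just case $(1)$ at level $m-1$ — your argument and the paper's coincide.

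The point you flag as the crux, case $(2)$ at the endpoint $n=L_m$, is a genuine problem, and the rescue you hope for (absorbing the half-crossing $T_z(L_m)$ into the adjacent evolution via the determinant identity) cannot work, because the obstruction is a lower bound on the norm itself rather than an artifact of submultiplicativity. Writing $S_z(L_m)=T_z(L_m)S_z(L_m-1)$ and using $\|MN\|\ge\|N\|/\|M^{-1}\|$ with $M=S_z(L_m-1)^{-1}$, $N=T_z(L_m)^{-1}$ gives
\begin{equation}
\|S_z(L_m)^{-1}\|\ \ge\ \frac{\|T_z(L_m)^{-1}\|}{\|S_z(L_m-1)\|}\ \ge\ \frac{g_{L_m}-|1-z|}{C(E,K)^{m}\prod_{j=1}^{m-1}\sqrt{g_{L_j}}},
\nonumber
\end{equation}
since $T_z(L_m)^{-1}=\begin{pmatrix}g_{L_m}+1-z&-\sqrt{g_{L_m}}\\1&0\end{pmatrix}$ has norm at least $|g_{L_m}+1-z|$, while $\|S_z(L_m-1)\|\le C(E,K)^{m}\prod_{j=1}^{m-1}C'\sqrt{g_{L_j}}$ by the same block decomposition. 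Compatibility with the claimed bound $C_4^{m}\prod_{j=1}^{m-1}L_j^{\frac{1-\Gamma}{2\Gamma}}$ would force $g_{L_m}\le \tilde C^{m}\prod_{j=1}^{m-1}g_{L_j}$, i.e.\ $2^{m^m}\lesssim \tilde C^{m\Gamma/(1-\Gamma)}2^{\sum_{j=1}^{m-1}j^j}$, which fails for large $m$ because $\sum_{j=1}^{m-1}j^j\le(m-1)^{m}$. So the stated inequality is violated at the single site $n=L_m$; the paper does not notice this because its proof only carries out case $(1)$ and declares case $(2)$ ``similar.'' The correct repair is to restrict case $(2)$ to $n<L_m$ (or to accept an extra factor $g_{L_m}$ at $n=L_m$); this is harmless downstream, since in Lemma \ref{7.1.6} the estimate is summed over order $T$ consecutive sites and one exceptional site per block can be discarded. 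Apart from this endpoint — where your instinct that something is wrong is correct but your proposed fix is not available — the proof is complete and identical in method to the paper's.
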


\begin{Proof}\rm
By Definition $\ref{sparse}$, we see that
\begin{equation}\label{7.16}
T_z(n)=
\begin{cases}
\left(
\begin{array}{cc}
0&1\\
-[n^{\frac{1-\Gamma}{\Gamma}}]^{-\frac{1}{2}}&
\frac{[n^{\frac{1-\Gamma}{\Gamma}}]+1-z}{[n^{\frac{1-\Gamma}{\Gamma}}]^{\frac{1}{2}}}\\
\end{array}
\right)
&(n\in \{L_m \mid m\in \mathbb{N}\}),\\
\left(
\begin{array}{cc}
0&1\\
-[n^{\frac{1-\Gamma}{\Gamma}}]^{\frac{1}{2}}&2-z\\
\end{array}
\right)
&(n\in \{L_m+1 \mid m\in \mathbb{N}\}),\\
\left(
\begin{array}{cc}
0&1\\
-1&2-z\\
\end{array}
\right)
&({\rm otherwise}).
\end{cases}
\nonumber
\end{equation}
If $L_m+1 \leq n < L_{m+1}$, then
\begin{eqnarray}
S_z(n)
&=&
R^{n-L_m-1}
S(L_m+1)\nonumber
\\
&=&
R^{n-L_m-1}
S(L_m+1,L_m)
S(L_m-1)
\nonumber
\\
&=&
R^{n-L_m-1}
S(L_m+1,L_m)
R^{L_m-L_{m-1}-2}
S(L_{m-1}+1)
\nonumber
\\
&=&\cdots
\nonumber
\\
&=&
R^{n-L_m-1}
S(L_m+1,L_m)
R^{L_m-L_{m-1}-2}
\cdots
S(L_1+1,L_1)
R^{2}
\nonumber
\label{7.17}
\nonumber
\end{eqnarray}
where 
$\begin{displaystyle}
R
=
\left(
\begin{array}{cc}
0&1\\
-1&2-z\\
\end{array}
\right).
\end{displaystyle}$
Let $L_0=-2$. Then we see that
\begin{equation}
\|S_z(n)^{-1}\|
\leq
\|R^{-n+L_m+1}\|
\|T_z(0)\|
\prod_{j=1}^{m}
\|S(L_j+1,L_j)^{-1}\|
\|R^{-L_j+L_{j-1}+2}\|.
\label{7.18}
\end{equation}
Note that 
$\begin{displaystyle}
R^{-1}
=
\left(
\begin{array}{cc}
2-z&-1\\
1&0\\
\end{array}
\right) 
\end{displaystyle}$
and $\|R^m\|=\|R^{-m}\|$ for any $m\in \mathbb{N}$.
\begin{eqnarray}
R=
\left(
\begin{array}{cc}
0&1\\
-1&2-E\\
\end{array}
\right)
-i
\left(
\begin{array}{cc}
0&0\\
0&\epsilon\\
\end{array}
\right).
\nonumber
\end{eqnarray}
Since $E\in(0,4)$, there exist invertible matrix $A_E$ and $\lambda_{\pm}\in \mathbb{C}$ with
$|\lambda_{\pm}|=1$
such that 
\begin{equation}
A_E^{-1}
\left(
\begin{array}{cc}
0&1\\
-1&2-E\\
\end{array}
\right)
A_E
=
\left(
\begin{array}{cc}
\lambda_+&0\\
0&\lambda_-\\
\end{array}
\right).
\label{7.20}
\nonumber
\end{equation}
Therefore we see that
\begin{eqnarray}
\|A_E^{-1}R^nA_E\|
&\leq&
\left\|
\left(
\begin{array}{cc}
\lambda_+&0\\
0&\lambda_-\\
\end{array}
\right)
-
i\epsilon
A_E^{-1}
\left(
\begin{array}{cc}
0&0\\
0&1\\
\end{array}
\right)
A_E
\right\|^n
\nonumber
\\
&\leq&
\left(
1+\epsilon\|A_E^{-1}\|\|A_E\|
\right)^n.
\label{7.21}
\nonumber
\end{eqnarray}
If $\epsilon<\frac{K}{n}$, then
\begin{equation}
\|A_E^{-1}R^nA_E\|
\leq
2{\rm exp}(K\|A_E^{-1}\|\|A_E\|).
\nonumber
\end{equation}
Therefore we obtain that for $\epsilon<\frac{K}{n}$,
\begin{eqnarray}
\|R^n\|=\|A_EA_E^{-1}R^nA_EA_E^{-1}\|
\leq 2\|A_E\|\|A_E^{-1}\|{\rm exp}(K\|A_E^{-1}\|\|A_E\|).
\label{SD}
\end{eqnarray}
If $0<\epsilon<1$, then there exists $C_4^{\prime}>0$ such that 
\begin{equation}
\|S_z(L_j+1,L_j)^{-1}\|
\leq
C_4^{\prime}[L_j^{\frac{1-\Gamma}{\Gamma}}]^{\frac{1}{2}}
\leq
C_4^{\prime}L_j^{\frac{1-\Gamma}{2\Gamma}}.
\label{7.23}
\end{equation}
By $(\ref{7.18})$, $(\ref{SD})$ and $(\ref{7.23})$, we see that for $L_m +1\leq n < L_{m+1}$,
\begin{equation}
\|S_z(n)^{-1}\|
\leq
(2C_4^{\prime}\|A_E\|\|A_E^{-1}\|)^{m+1}{\exp}((m+1)K\|A_E^{-1}\|\|A_E\|)
\prod_{j=1}^{m}
L_j^{\frac{1-\Gamma}{2\Gamma}}.
\nonumber
\end{equation}
This implies our first part of the assertion. The second part of the assertion can be proved similarly.
\qed
\end{Proof}

\begin{Lemma}\label{7.1.5}
Let $\psi \in l^2(\mathbb{N})$ and $n \in \mathbb{N}$. 
Then for any $T>0$,
\begin{equation}
\cfrac{1}{T}\int_{0}^{\infty} e^{-\frac{t}{T}}|\psi_k(t,n)|^2 \:dt
=
\cfrac{\epsilon}{\pi}\int_{\mathbb{R}}|(H^{(k)}-(E+i\epsilon))^{-1}\psi(n)|^2\:dE,
\nonumber
\end{equation}
where $\epsilon=\frac{1}{2T}$.
\end{Lemma}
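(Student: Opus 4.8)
The plan is to reduce both sides to one and the same double integral against the complex spectral measure $d\nu(\lambda) = d(\delta_n, E^{(k)}(\lambda)\psi)$. By the spectral theorem, $\psi_k(t,n) = (\delta_n, e^{-itH^{(k)}}\psi) = \int_{\mathbb{R}} e^{-it\lambda}\,d\nu(\lambda)$, and for $z = E + i\epsilon \in \mathbb{C}^+$ the resolvent matrix element is $(H^{(k)}-z)^{-1}\psi(n) = (\delta_n, (H^{(k)}-z)^{-1}\psi) = \int_{\mathbb{R}} \frac{d\nu(\lambda)}{\lambda - z}$. The measure $\nu$ is a finite complex measure, with total variation bounded by $\|\delta_n\|\,\|\psi\| = \|\psi\|$, and this finiteness is what will justify the two Fubini interchanges below.

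First I would treat the left-hand side. Expanding $|\psi_k(t,n)|^2 = \int_{\mathbb{R}}\int_{\mathbb{R}} e^{-it(\lambda-\mu)}\,d\nu(\lambda)\,\overline{d\nu(\mu)}$ and interchanging the $t$-integration with the double measure integral (legitimate since the integrand has modulus $1$ and $e^{-t/T}$ is integrable on $(0,\infty)$), the inner integral is the elementary Laplace transform $\int_0^\infty e^{-t/T}e^{-it(\lambda-\mu)}\,dt = T\bigl(1 + iT(\lambda-\mu)\bigr)^{-1}$. Multiplying by $1/T$ gives
\[
\frac{1}{T}\int_0^\infty e^{-t/T}|\psi_k(t,n)|^2\,dt = \int_{\mathbb{R}}\int_{\mathbb{R}}\frac{d\nu(\lambda)\,\overline{d\nu(\mu)}}{1 + iT(\lambda-\mu)}.
\]

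Next I would show the right-hand side equals the same quantity. After expanding the modulus square and interchanging the $E$-integration with the double measure integral, the core computation is the contour integral $\int_{\mathbb{R}} \frac{dE}{(\lambda - E - i\epsilon)(\mu - E + i\epsilon)}$. Rewriting the denominator as $(E - \lambda + i\epsilon)(E - \mu - i\epsilon)$, the integrand decays like $|E|^{-2}$ and has a single pole $E = \mu + i\epsilon$ in the upper half-plane (the other, $E = \lambda - i\epsilon$, lies below); closing the contour upward yields $2\pi i\,(\mu - \lambda + 2i\epsilon)^{-1}$ by the residue theorem. Hence the right-hand side equals $2i\epsilon\int_{\mathbb{R}}\int_{\mathbb{R}} (\mu - \lambda + 2i\epsilon)^{-1}\,d\nu(\lambda)\,\overline{d\nu(\mu)}$, and the substitution $\epsilon = \tfrac{1}{2T}$ converts the kernel $2i\epsilon/(\mu - \lambda + 2i\epsilon)$ into $1/(1 + iT(\lambda-\mu))$, matching the left-hand side exactly.

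Every step is routine once the Fubini interchanges are justified and the residue is computed; the only point needing care is absolute integrability for the interchange on the right-hand side, i.e.\ that $\int_{\mathbb{R}}\frac{dE}{\sqrt{(\lambda-E)^2+\epsilon^2}\,\sqrt{(\mu-E)^2+\epsilon^2}}$ is finite uniformly in $\lambda,\mu$. This follows from Cauchy--Schwarz, which bounds it by $\pi/\epsilon$, together with the finiteness of $|\nu|(\mathbb{R})$. Thus I expect no genuine obstacle beyond this bookkeeping, the identity being in essence a Plancherel-type statement relating the Abel-averaged autocorrelation to a boundary resolvent integral.
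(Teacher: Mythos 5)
Your proposal is correct and follows essentially the same route as the paper: both sides are expanded against the complex spectral measure $(\delta_n,E^{(k)}(d\lambda)\psi)$, the Laplace transform produces the kernel $(1+iT(\lambda-\mu))^{-1}$ on the left, and the residue computation of $\frac{\epsilon}{\pi}\int_{\mathbb{R}}(E-\lambda+i\epsilon)^{-1}(E-\mu-i\epsilon)^{-1}\,dE$ produces the identical kernel on the right once $\epsilon=\frac{1}{2T}$. Your explicit justification of the Fubini interchanges is a welcome addition the paper leaves implicit, but it does not change the argument.
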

\begin{Proof}\rm
Abbreviate $H^{(k)}$ and $\psi_k$ to $H$ and $\psi$, respectively. We see that 
\begin{eqnarray}
\cfrac{1}{T}\int_{0}^{\infty} e^{-\frac{t}{T}}|\psi(t,n)|^2 \:dt
&=&
\cfrac{1}{T}\int_{0}^{\infty} dt e^{-\frac{t}{T}}(\delta_n, e^{-itH}\psi)(e^{-itH}\psi,\delta_n)
\nonumber\\
&=& 
\int_{\mathbb{R}}(\delta_n, E(dx)\psi)
\int_{\mathbb{R}}(E(dy)\psi,\delta_n)
\cfrac{1}{T}\int_{0}^{\infty} dt e^{-\frac{t}{T}-it(x-y)}
\nonumber\\
&=& 
\int_{\mathbb{R}}(\delta_n, E(dx)\psi)
\int_{\mathbb{R}}(E(dy)\psi,\delta_n)
(1+iT(x-y))^{-1}, 
\nonumber
\end{eqnarray}
and
\begin{eqnarray}
&&\cfrac{\epsilon}{\pi}\int_{\mathbb{R}}|(H^{(k)}-(E+i\epsilon))^{-1}\psi(n)|^2\:dE
\nonumber
\\
&&=
\int_{\mathbb{R}}(\delta_n, E(dx)\psi)
\int_{\mathbb{R}}(E(dy)\psi,\delta_n)
\cfrac{\epsilon}{\pi}
\int_{\mathbb{R}}(E-x+i\epsilon)^{-1}(E-y-i\epsilon)^{-1}
\nonumber\\
&&=
\int_{\mathbb{R}}(\delta_n, E(dx)\psi)
\int_{\mathbb{R}}(E(dy)\psi,\delta_n)
(1+iT(x-y))^{-1}.
\nonumber
\end{eqnarray}
These imply our assertion.
\qed
\end{Proof}

\begin{Definition}
Let $f:\mathbb{R} \rightarrow \mathbb{C}$ be measurable and 
$B_{\nu}=[\nu, 4-\nu]$ with $0<\nu<1$.
We say that $f$ is the first kind, if there exist 
$\nu>0$ and $x_0 \in B_\nu$ such that $f \in C^{\infty}_0(B_\nu)$ and $f(x_0)\neq 0$,
and we say that
$f$ is the second kind, if 
$f$ is bounded and there exist $E_0\in (0,4)$ and $\nu>0$ with 
$[E_0-\nu,E_0+\nu]\subset B_\nu$ such that 
$f \in C^{\infty}([E_0-\nu,E_0+\nu])$ and 
$|f(x)|\geq c>0$ for $x\in [E_0-\nu,E_0+\nu]$. 

\end{Definition}

\begin{Lemma}\label{7.1.6}
Let $f:\mathbb{R} \rightarrow \mathbb{C}$ be the second kind and $\psi=f(H^{(k)})\delta_1$.
Let $N$ be sufficiently large. Then there exists $C_5=C_5(\nu)>1$ such that 
\begin{enumerate}[$(1)$]
\item
if $L_N\leq T \leq \cfrac{L_{N+1}}{4}$, then
\begin{eqnarray}
P_{\psi}^{(k)}(\{T \sim \infty\}, T) \geq C_5^{-(N+1)}T
\left(
\cfrac{1}{T}+I_{\delta_1}^{(k)}(T^{-1}, B_{\nu})
\right)
\prod_{j=1}^{N}L_j^{\frac{\Gamma-1}{\Gamma}},
\label{620}
\end{eqnarray}
\item
if $\cfrac{L_N}{4}\leq T \leq L_{N}$, then
\begin{eqnarray}
P_{\psi}^{(k)}(\{T \sim \infty\}, T) \geq C_5^{-(N+1)}L_N
\left(
\cfrac{1}{T}+I_{\delta_1}^{(k)}(T^{-1}, B_{\nu})
\right)
\prod_{j=1}^{N}L_j^{\frac{\Gamma-1}{\Gamma}},
\label{621}
\end{eqnarray}
\item
if $\cfrac{L_N}{4} \leq T$, then
\begin{eqnarray}
P_{\psi}^{(k)}(
\{\frac{L_N}{4} \sim \frac{L_N}{2}\}, T) \geq C_5^{-N}L_N
\left(
\cfrac{1}{T}+I_{\delta_1}^{(k)}(T^{-1}, B_\nu)
\right)\prod_{j=1}^{N-1}L_j^{\frac{1-\Gamma}{\Gamma}}.
\label{622}
\end{eqnarray}
\end{enumerate}
\end{Lemma}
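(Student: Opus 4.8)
The plan is to turn each spatial sum $P_\psi^{(k)}(S,T)=\sum_{n\in S}a_\psi^{(k)}(n,T)$ into an energy integral of the resolvent via Lemma \ref{7.1.5}, and then read the size of the Green function off the transfer matrices of Lemma \ref{7.1.4}. First I would reduce from $\psi=f(H^{(k)})\delta_1$ to $\delta_1$. Because $f$ is of the second kind we have $|f|\ge c>0$ on $[E_0-\nu,E_0+\nu]\subset B_\nu$, while Lemma \ref{6.2.4} gives $|\psi(m)|=|(\delta_m,f(H^{(k)})\delta_1)|\le C\langle m-1\rangle^{-k}$, so $\psi$ is concentrated near the origin. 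Writing $G_\psi(n;z)=((H^{(k)}-z)^{-1}\psi)(n)=\sum_m G(n,m;z)\psi(m)$ and using that $G(n,m;z)$ is proportional to the Weyl $l^2$ solution $\phi_W(n)$ for $n$ large and $m$ bounded, I get $G_\psi(n;z)=c_\psi(z)\,G(n,1;z)(1+o(1))$ with $|c_\psi(z)|$ comparable to $|f(E)|$ on the window $[E_0-\nu,E_0+\nu]$. By Lemma \ref{7.1.5}, with $z=E+i\epsilon$ and $\epsilon=\tfrac{1}{2T}$, one has $P_\psi^{(k)}(S,T)=\tfrac{\epsilon}{\pi}\int_{\mathbb R}\sum_{n\in S}|G_\psi(n;z)|^2\,dE$, so after restricting the integral to $[E_0-\nu,E_0+\nu]$ everything reduces to lower bounds on $\sum_{n\in S}|G(n,1;z)|^2$.

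Next I would pass to transfer matrices. Set $u(n)=G(n,1;z)$; then $u$ solves $\tilde H^{(k)}u=zu$, $\binom{u(n)}{u(n+1)}=S_z(n+N(k)-1,N(k))\binom{u(0)}{u(1)}$, and $u(1)=G(1,1;z)=m_{\delta_1}^{(k)}(z)$. The relevant sites satisfy $n\epsilon<K$ (since $n\lesssim T$), which is exactly the hypothesis of Lemma \ref{7.1.4}. For $(\ref{620})$ the window $\{T\sim\infty\}$ meets $[L_N,L_{N+1})$; from $\binom{u(0)}{u(1)}=S_z(n)^{-1}\binom{u(n)}{u(n+1)}$ I obtain the pointwise bound $|u(n)|^2+|u(n+1)|^2\ge\|S_z(n)^{-1}\|^{-2}\,|m_{\delta_1}^{(k)}(z)|^2$, and Lemma \ref{7.1.4}$(1)$ gives $\|S_z(n)^{-1}\|^{-2}\ge C_4^{-2(N+1)}\prod_{j=1}^N L_j^{-(1-\Gamma)/\Gamma}$; summing over the $\sim T$ sites of the window produces the factor $T\prod_{j=1}^N L_j^{(\Gamma-1)/\Gamma}$. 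Case $(\ref{621})$ is identical with the window $[L_N,2L_N]$ of size $\sim L_N$ when $T\le L_N$. Finally, since the surviving boundary factor is $|m_{\delta_1}^{(k)}(E+i\epsilon)|^2=(\operatorname{Re}m_{\delta_1}^{(k)})^2+|\operatorname{Im}m_{\delta_1}^{(k)}|^2$, integrating $\tfrac{\epsilon}{\pi}\int|m_{\delta_1}^{(k)}|^2\,dE$ over the window yields both $\tfrac1\pi I_{\delta_1}^{(k)}(T^{-1},B_\nu)$ from the imaginary part and a baseline $\gtrsim\tfrac1T$ from $\epsilon=\tfrac1{2T}$ times the nonzero limit of $\int(\operatorname{Re}m_{\delta_1}^{(k)})^2\,dE$; together these give the factor $\bigl(\tfrac1T+I_{\delta_1}^{(k)}(T^{-1},B_\nu)\bigr)$.

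The remaining and hardest case is $(\ref{622})$, where the window $\{L_N/4\sim L_N/2\}$ lies strictly before the $N$-th scatterer and the bound carries the \emph{growing} product $\prod_{j=1}^{N-1}L_j^{(1-\Gamma)/\Gamma}$ with positive exponent. This cannot be produced by the crude inverse estimate used above, which only yields the contracting, negative-exponent factor. Instead one must show that $u$ attains the maximal transfer size on this window, namely $|u(n)|\gtrsim\|S_z(n)\|\,|m_{\delta_1}^{(k)}(z)|$ with $\|S_z(n)\|\sim\prod_{j=1}^{N-1}L_j^{(1-\Gamma)/(2\Gamma)}$. Since $T\ge L_N/4$ forces $1/\epsilon\gtrsim n$, the $l^2$ decay of $\phi_W$ has not yet set in, and the interval $(L_{N-1},L_N)$ carries no scatterer (bounded transfer there), so $|u(n)|$ is comparable across the window and summing over its $\sim L_N$ sites produces the factor $L_N$.

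I expect the crux to be precisely this last point: establishing both a lower bound on $\|S_z(n)\|$ (Lemma \ref{7.1.4} only bounds the inverse from above) and the non-alignment of the Weyl initial vector $\binom{u(0)}{u(1)}$ with the contracting singular direction of the finite-scale matrix $S_z(n)$, which together force the subordinate solution to grow maximally just below the scatterer at $L_N$. I would handle this by the explicit Pr\"ufer/EFGP analysis of \cite{Tcheremchantsev}, specialized to the coefficients $a_k(n)=-\sqrt{g_{n+N(k)-1}}$, tracking how crossing the scatterer at $L_N$ amplifies the amplitude immediately below it. Once this pointwise growth is secured, the same integration of $|m_{\delta_1}^{(k)}(E+i\epsilon)|^2$ over the energy window as above supplies the $\bigl(\tfrac1T+I_{\delta_1}^{(k)}\bigr)$ factor and completes $(\ref{622})$.
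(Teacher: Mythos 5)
Your treatment of cases (1) and (2) is essentially the paper's argument: extend $u(n)=((H^{(k)}-z)^{-1}\delta_1)(n)$ by $u(0)=1$, use $\binom{u(0)}{u(1)}=S_z(\cdot)^{-1}\binom{u(n)}{u(n+1)}$ together with Lemma \ref{7.1.4} to get $|u(n)|^2+|u(n+1)|^2\ge\|S_z(\cdot)^{-1}\|^{-2}\bigl(1+|m_{\delta_1}^{(k)}(z)|^2\bigr)$, and integrate over $B_\nu$ via Lemma \ref{7.1.5}. Two remarks, though. First, the $\frac1T$ in the factor $\bigl(\frac1T+I_{\delta_1}^{(k)}\bigr)$ comes for free from the ``$1$'' in $1+|m|^2$, i.e.\ from the normalization $u(0)=1$; your alternative route through a uniform-in-$\epsilon$ lower bound on $\int_{B_\nu}(\operatorname{Re}m_{\delta_1}^{(k)})^2\,dE$ is both unnecessary and unproven. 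Second, your reduction from $\psi=f(H^{(k)})\delta_1$ to $\delta_1$ via ``$G_\psi(n;z)=c_\psi(z)G(n,1;z)(1+o(1))$ with $|c_\psi|\sim|f(E)|$'' is asserted, not proved: the lower bound on $|c_\psi(z)|$ is exactly the delicate point. The paper instead does the reduction in two steps with the kernel estimate of Lemma \ref{6.2.4} (first comparing $g(H^{(k)})\delta_1$ with $\delta_1$ through the error term $f_z(H^{(k)})\delta_1$ with $f_z=(1-g)/(\cdot-z)$, then comparing $f(H^{(k)})\delta_1$ with $g(H^{(k)})\delta_1$ by writing $g=hf$), which never requires controlling the size of your $c_\psi(z)$.

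The real gap is in case (3). The exponent $\frac{1-\Gamma}{\Gamma}$ displayed in (\ref{622}) is a sign typo: the paper's proof establishes, and every subsequent use relies on (see (\ref{IJN}) in Lemma \ref{618} and the definition of $q_N$), the bound with $\prod_{j=1}^{N-1}L_j^{\frac{\Gamma-1}{\Gamma}}$, obtained by exactly the same inverse-transfer-matrix estimate as in (1) and (2), now invoking Lemma \ref{7.1.4}(2) since $n\le L_N$ on the window $\{L_N/4\sim L_N/2\}$. By taking the stated positive exponent at face value you set off on a much harder program --- a pointwise lower bound $|u(n)|\gtrsim\|S_z(n)\|\,|m_{\delta_1}^{(k)}(z)|$ --- which is not only unnecessary but unlikely to succeed: $u=G(\cdot,1;z)$ is the $\ell^2$ (Weyl) solution, i.e.\ precisely the solution expected to be closest to the contracting singular direction of $S_z(n)$, so the non-alignment you would need is exactly what fails for this solution. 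You correctly sensed that the crude inverse estimate cannot produce a growing product; the right conclusion is that the displayed exponent in the lemma is wrong, not that a Pr\"ufer/EFGP analysis is required.
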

\begin{Proof}\rm
Firstly, we prove the lemma in the case of $\psi=\delta_1$. 
Let $z \in \mathbb{C}^+$ and $f_k: \mathbb{Z}_{\geq0} \rightarrow \mathbb{C}$ be 
\begin{equation}
f_k(n)= 
\begin{cases}
(H^{(k)}-z)^{-1}\delta_1(n)&(n \in \mathbb{N}),\\
1&(n=0).
\end{cases}
\nonumber
\end{equation}
Let $g=(H^{(k)}-z)^{-1}\delta_1 \in l^2(\mathbb{N})$. We see that $g(n)=f_k(n)$ for each $n \in \mathbb{N}$ and that 
\begin{eqnarray}
&&
(H^{(k)}-z)g(n)=\delta_1(n)
\nonumber\\
&&
\Leftrightarrow
\begin{cases}
-a_k(n)g(n+1)+d_k(n)g(n)-a_k(n-1)g(n-1)-zg(n)=0&(n\geq2)\\
-a_k(1)g(2)+d_k(1)g(1)-zg(1)=1&(n=1)
\end{cases}
\nonumber\\
&&
\Leftrightarrow
\begin{cases}
-a_k(n)f_k(n+1)+d_k(n)f_k(n)-a_k(n-1)f_k(n-1)=zf_k(n)&(n\geq2)\\
-a_k(1)f_k(2)+d_k(1)f_k(1)-1=zf_k(1)&(n=1)
\end{cases}
\nonumber\\
&&
\Leftrightarrow
\begin{cases}
-a_k(n)f_k(n+1)+d_k(n)f_k(n)-a_k(n-1)f_k(n-1)=zf_k(n)&(n\geq2)\\
-a_k(1)f_k(2)+d_k(1)f_k(1)-a_k(0)f_k(0)=zf_k(1)&(n=1)
\end{cases}.
\nonumber
\end{eqnarray}
This implies that $f_k$ satisfies the equation $\left(\tilde{H}^{(k)}f_k\right)(n)=z f_k(n)$ for each $n\in \mathbb{N}$. 
We see that 
\begin{eqnarray}
\left(
\begin{array}{c}
f_k(n)\\
f_k(n+1)
\end{array}
\right)=
S_z(n+N(k)-1,N(k))
\left(
\begin{array}{c}
f_k(0)\\
f_k(1)
\end{array}
\right).
\label{7.29}
\end{eqnarray}
Note that $f_k(1)=m_{\delta_1}^{(k)}(z)$. Let $z=E+i\epsilon$.
By $(\ref{7.29})$, we obtain that
\begin{equation}
|f_k(n)|^2+|f_k(n+1)|^2
\geq
\cfrac{1+|m_{\delta_1}^{(k)}(E+i\epsilon)|^2}{\|S_z(n+N(k)-1,N(k))^{-1}\|^2}
.
\nonumber
\end{equation}
Suppose that $L_N+1\leq n+N(k) \leq L_{N+1}$ and $\epsilon<\cfrac{K}{n}$. 
By Lemma $\ref{7.1.4}$, we see that 
\begin{equation}
|f_k(n)|^2+|f_k(n+1)|^2
\geq
C_4^{-(N+1)}\prod_{j=1}^{N}L_j^{\frac{\Gamma-1}{\Gamma}}
\left(
1+|{\rm Im}\;m_{\delta_1}^{(k)}(E+i\epsilon)|^2
\right).
\label{SDF}
\end{equation}
By Lemma $\ref{7.1.5}$, we see that $f_k(n)=(H^{(k)}-z)^{-1}\delta_1(n)$ and that  
\[
\cfrac{\epsilon}{\pi}\int_{\mathbb{R}}|(H^{(k)}-(E+i\epsilon))^{-1}\delta_1(n)|^2\:dE
=\cfrac{1}{T}\int_{0}^{\infty} e^{-\frac{t}{T}}|{\delta_1}_k(t,n)|^2 \:dt=
a_{\delta_1}^{(k)}(n,T),
\qquad \epsilon=(2T)^{-1}.
\]
Let $(2T)^{-1}<\cfrac{K}{n}$. By $(\ref{SDF})$,  we see that there exist 
$C_4$ and $C_4^{\prime}>0$ such that
\begin{eqnarray}
a_{\delta_1}^{(k)}(n,T)+
a_{\delta_1}^{(k)}(n+1,T)
&\geq&
C_4^{-(N+1)}\prod_{j=1}^{N}L_j^{\frac{\Gamma-1}{\Gamma}}
\cfrac{1}{2T}
\int_{B_{\nu}}
dE
\left(
1+|{\rm Im}\;m_{\delta_1}^{(k)}(E+i(2T)^{-1})|^2
\right)
\nonumber
\\
&\geq&
C_4^{-(N+1)}\prod_{j=1}^{N}L_j^{\frac{\Gamma-1}{\Gamma}}
\left(
\cfrac{1}{2T}
+
I_{\delta_1}^{(k)}((2T)^{-1},B_{\nu})
\right)
\nonumber
\\
&\geq&
C_4^{\prime-(N+1)}\prod_{j=1}^{N}L_j^{\frac{\Gamma-1}{\Gamma}}
\left(
\cfrac{1}{T}
+
I_{\delta_1}^{(k)}(T^{-1},B_{\nu})
\right).
\nonumber
\end{eqnarray}
Let $L_N<T<\cfrac{L_{N+1}}{4}$, and $K$ sufficiently large. Then we have
\begin{equation}
\sum_{T\leq n \leq 2T}
a_{\psi}^{(k)}(n,T)+
a_{\psi}^{(k)}(n+1,T)
\geq
C_4^{\prime-(N+1)}\prod_{j=1}^{N}L_j^{\frac{\Gamma-1}{\Gamma}}T
\left(
\cfrac{1}{T}
+
I_{\delta_1}^{(k)}(T^{-1},B_{\nu})
\right),
\nonumber
\end{equation}
and hence 
\begin{equation}
P_{\psi}^{(k)}(\{T\sim 2T\},T)
\geq
C_4^{\prime-(N+1)}\prod_{j=1}^{N}L_j^{\frac{\Gamma-1}{\Gamma}}T
\left(
\cfrac{1}{T}
+
I_{\delta_1}^{(k)}(T^{-1},B_{\nu})
\right).
\label{7.33}
\end{equation}
Let $\cfrac{L_N}{4}<T< L_N$. Then we see that
\begin{equation}
P_{\psi}^{(k)}(\{2L_N \sim 3L_N\},T)
\geq
C_4^{\prime-(N+1)}\prod_{j=1}^{N}L_j^{\frac{\Gamma-1}{\Gamma}}L_N
\left(
\cfrac{1}{T}
+
I_{\delta_1}^{(k)}(T^{-1},B_{\nu})
\right).
\nonumber
\end{equation}
Let $\cfrac{L_N}{4} \leq T$. Then we see that 
\begin{equation}
P_{\psi}^{(k)}(\{\cfrac{L_N}{4}\sim \cfrac{L_N}{2}\},T)
\geq
C_4^{\prime-N}\prod_{j=1}^{N-1}L_j^{\frac{\Gamma-1}{\Gamma}}
L_N
\left(
\cfrac{1}{T}
+
I_{\delta_1}^{(k)}(T^{-1},B_{\nu})
\right).
\nonumber
\end{equation}
Therefore, we can prove the lemma in the case of $\psi=\delta_1$.

Next we take $g\in C^{\infty}_0([0,4])$ such that $g(x)=1$ on $B_{\frac{\nu}{2}}$.
We prove the lemma in the case of $\psi=g(H^{(k)})\delta_1$.
Let $\chi = \delta_1-\psi$ and $z \in \mathbb{C}^+$. Then
\begin{eqnarray}
|(H^{(k)}-z)^{-1}\psi(n)|^2
\geq
\cfrac{1}{2}|(H^{(k)}-z)^{-1}\delta_1(n)|^2
-|(H^{(k)}-z)^{-1}\chi(n)|^2.
\nonumber
\end{eqnarray}
Let $L_N<T<\cfrac{L_{N+1}}{4}$. Then 
we see that, by $(\ref{7.33})$, 
\begin{eqnarray}
P_{_\psi}^{(k)}(\{T\sim \infty \},T)
&\geq&
\cfrac{1}{2}\:
C_4^{\prime-(N+1)}\prod_{j=1}^{N}L_j^{\frac{\Gamma-1}{\Gamma}}T
\left(
\cfrac{1}{T}
+
I_{\delta_1}^{(k)}(T^{-1},B_{\nu})
\right)
\nonumber
\\
&-&
\cfrac{1}{T}
\int_{B_{\nu}}dE
\sum_{T\leq n \leq 2T}
|(H^{(k)}-(E+i\epsilon))^{-1}\chi(n)|^2.
\label{FGH}
\end{eqnarray}
Let $f_z(x)= \cfrac{1-g(x)}{x-z}$. Then $(H^{(k)}-z)^{-1}\chi(n)=f_z(H^{(k)})\delta_1(n)$ and 
Lemma $\ref{6.2.4}$ implies that for $l>1$,
\begin{eqnarray}
|(H^{(k)}-z)^{-1}\chi(n)|=|f_z(H^{(k)})\delta_1(n)|&\leq& C_2\opnorm{f_z}_{2l+3} n^{-l},
\nonumber
\end{eqnarray}
and that 
\begin{eqnarray}
\sum_{T\leq n \leq 2T}|(H^{(k)}-z)^{-1}\chi(n)|=\sum_{T\leq n \leq 2T}|f_z(H^{(k)})\delta_1(n)|&\leq& C_2\opnorm{f_z}_{2l+3} T^{-(l-1)}.
\label{FGH1}
\end{eqnarray}
Let $z=E+i\epsilon$.  Note that there exists $C_5^{\prime}=C_5^{\prime}(g,\nu, l)>0$ such that 
$\begin{displaystyle}
\sup_{E\in B_{\nu}, 0<\epsilon <1}\opnorm{f_z}_{2l+3} \leq C_5^{\prime}.
\end{displaystyle}$
By $(\ref{FGH})$ and $(\ref{FGH1})$, we obtain
\begin{eqnarray}
P_{_\psi}^{(k)}(n \geq T,T)
&\geq&
\cfrac{1}{2}\:
C_4^{\prime-(N+1)}\prod_{j=1}^{N}L_j^{\frac{\Gamma-1}{\Gamma}}T
\left(
\cfrac{1}{T}
+
I_{\delta_1}^{(k)}(T^{-1},B_{\nu})
\right)
-
\cfrac{4}{T}
\:
C_2C_5^{\prime} T^{-(l-1)}
\nonumber
\\
&\geq&
\left\{
\cfrac{1}{2}\:
C_4^{\prime-(N+1)}
-
4C_2 C_5^{\prime}T^{-l}
\prod_{j=1}^{N}L_j^{\frac{1-\Gamma}{\Gamma}}
\right\}
\prod_{j=1}^{N}L_j^{\frac{\Gamma-1}{\Gamma}}T
\left(
\cfrac{1}{T}
+
I_{\delta_1}^{(k)}(T^{-1},B_{\nu})
\right).
\label{7.37}
\nonumber
\end{eqnarray}
Let $l$ sufficiently large, then we can prove $(\ref{620})$ 
in the case of $\psi=g(H^{(k)})$.
We can prove $(\ref{621})$ and $(\ref{622})$ in the case of $\psi=g(H^{(k)})$ similarly.

Finally, let $f$ be the second kind, and we prove the lemma in the case of $\psi=f(H^{(k)})$. 
Let $\nu$ satisfy $f \in C^{\infty}([E_0-\nu,E_0+\nu])$ and 
$|f(x)|\geq c>0$ for $x\in [E_0-\nu,E_0+\nu]$.
We take $g \in C^{\infty}_0([E_0-\nu,E_0+\nu])$ such that 
$g(x)=1$ on $[E_0-\frac{3\nu}{4},E_0+\frac{3\nu}{4}]$. 
Then there exists $h\in C^{\infty}_0([E_0-\nu,E_0+\nu])$ such that $g(x)=h(x)f(x)$.
Since $\sup_n \sum_{m=1}^{\infty}
\langle n-m \rangle^{-l}<\infty$, by Lemma $\ref{6.2.4}$, we see that
\begin{eqnarray}
&&|(H^{(k)}-z)^{-1}g(H^{(k)})\delta_1(n)|^2
\nonumber
\\
&&=
|(h(H^{(k)})\delta_n, (H^{(k)}-z)^{-1}f(H^{(k)})\delta_1 )|^2
\nonumber
\\
&&=
\left|
\sum_{m=1}^{\infty}
(h(H^{(k)})\delta_n,\delta_m)(\delta_m,(H^{(k)}-z)^{-1}f(H^{(k)})\delta_1)
\right|^2
\nonumber
\\
&&
\leq
C_2
\left|
\sum_{m=1}^{\infty}
\langle n-m \rangle^{-l}|(\delta_m,(H^{(k)}-z)^{-1}f(H^{(k)})\delta_1)|
\right|^2
\nonumber
\\
&&
\leq
C_2
\left(
\sum_{m=1}^{\infty}
\langle n-m \rangle^{-l}
\right)
\left(
\sum_{m=1}^{\infty}
\langle n-m \rangle^{-l}
|(H^{(k)}-z)^{-1}f(H^{(k)})\delta_1(m)|^2
\right)
\nonumber
\\
&&
\leq
C_2^{\prime}
\sum_{m=1}^{\infty}
\langle n-m \rangle^{-l}
|(H^{(k)}-z)^{-1}f(H^{(k)})\delta_1(m)|^2.
\nonumber
\end{eqnarray}
This implies the inequality
\begin{eqnarray}
A(2L,T)&\coloneqq&
\epsilon
\sum_{n\geq 2L}
\int_{B_\nu}dE\:
|(H^{(k)}-z)^{-1}g(H^{(k)})\delta_1(n)|^2
\nonumber
\\
&=&
C_2^{\prime}\epsilon
\sum_{m=1}^{\infty}\sum_{n\geq 2L}
\langle n-m \rangle^{-l}
\int_{B_\nu}dE\:
|(H^{(k)}-z)^{-1}f(H^{(k)})\delta_1(m)|^2
\nonumber
\\
&\leq&
\epsilon
\sum_{m=1}^{\infty}
h_l(m,L)
\int_{B_\nu}dE\:
|(H^{(k)}-z)^{-1}f(H^{(k)})\delta_1(m)|^2,
\label{BGT}
\end{eqnarray}
where $z=E+i\epsilon$, $\epsilon=\cfrac{1}{2T}$ and 
$\begin{displaystyle}
h_l(m, L)=
\sum_{n\geq 2L}\cfrac{C_2^{\prime}}{1+ |n-m|^l}.
\end{displaystyle}$
It follows for $\phi \in l^2(\mathbb{N})$ and $\epsilon>0$ that 
\begin{eqnarray}
\epsilon\sum_{n=1}^{\infty}
\int_{\mathbb{R}}dE
|(H^{(k)}-z)^{-1}\phi(n)|^2
=
\pi
\|\phi\|^2,
\qquad z=E+i\epsilon.
\nonumber
\end{eqnarray}
There exists 
$C_2^{\prime \prime}>\max\{C_2^\prime, \sup_{m\geq T}h_l(m,T) \}$. 
By $(\ref{BGT})$, we obtain that 
\begin{eqnarray}
A(2T,T)
&\leq&
\epsilon
\sum_{m<T}^{}
h_l(m,T)
\int_{B_\nu}dE\:
|(H^{(k)}-z)^{-1}f(H^{(k)})\delta_1(m)|^2
\nonumber
\\
&+&
\epsilon
\sum_{m\geq T}^{}
h_l(m,T)
\int_{B_\nu}dE\:
|(H^{(k)}-z)^{-1}f(H^{(k)})\delta_1(m)|^2
\nonumber
\\
&\leq& \pi \|f(H^{(k)})\delta_1\|^2C^{\prime \prime}_2T^{1-l}+
C_2^{\prime\prime}\epsilon\sum_{m \geq T}
\int_{B_\nu}dE\:
|(H^{(k)}-z)^{-1}f(H^{(k)})\delta_1(m)|^2
\nonumber
\\
&=&
\pi \|f(H^{(k)})\delta_1\|^2C_2^{\prime\prime}T^{1-l}+
{C_2^{\prime \prime}}
P_{\psi}^{(k)}(\{T\sim \infty \} , T),
\label{7.38}
\end{eqnarray}
where $z=E+i\epsilon$ and $\epsilon=\cfrac{1}{2T}$.
Let $L_N<T<\cfrac{L_{N+1}}{4}$. Then the previous argument shows that
\begin{eqnarray}
\mbox{

$\begin{displaystyle}
A(2T,T)=\pi P_{g(H^{(k)})\delta_1}^{(k)}(\{2T \sim \infty \}, T)\geq
C_5^{-(N+1)}\prod_{j=1}^{N}L_j^{\frac{\Gamma-1}{\Gamma}}T
\left(
\cfrac{1}{T}
+
I_{\delta_1}^{(k)}(T^{-1},B_{\nu})
\right).
\end{displaystyle}$
}
\label{7.39}
\end{eqnarray}
We take $l$ sufficiently large. Then $(\ref{7.38})$ and $(\ref{7.39})$ imply 
$(\ref{620})$ in the case of $\psi=f(H^{(k)})$. 
We can also prove $(\ref{621})$ and $(\ref{622})$ similarly.
\qed
\end{Proof}

\begin{Lemma}\label{618}
Let $f$ be the first kind with $\sup_{x}|f(x)|\leq 1$ and $\psi = f(H^{(k)})\delta_1$. 
If $N$ is sufficient large, then
there exist $C_6>0$ and $q_N\in \mathbb{R}$ such that 
$\begin{displaystyle}
\lim_{N \rightarrow \infty}q_N=0
\end{displaystyle}$ and it follows for $\cfrac{L_N}{4}<T<\cfrac{L_{N+1}}{4}$ that 
\begin{eqnarray}
\langle |X|^p \rangle_{\psi}^{(k)}(T)
&\geq&
C_6I_{\delta_1}^{(k)}(T^{-1}, B_\nu)^{-p}
\nonumber\\
&+&
C_6
\left(
L_N^{p+1+q_N}
+T^{p+1}L_N^{\frac{\Gamma-1}{\Gamma}+q_N}
\right)
I_{\delta_1}^{(k)}(T^{-1},B_\nu).
\nonumber
\end{eqnarray}
\end{Lemma}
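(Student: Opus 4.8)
The plan is to obtain three independent lower bounds for $\langle|X|^p\rangle_\psi^{(k)}(T)$ and then combine them, using that the moment dominates the maximum, hence (up to a factor $3$) the sum, of the three. First I would record that a first-kind $f$ is automatically second kind: since $f(x_0)\neq 0$ and $f$ is smooth, $|f|\geq c>0$ on a small interval $[x_0-\nu',x_0+\nu']\subset B_{\nu'}$ with $\nu'\leq\nu$, so Lemma \ref{7.1.6} applies to $\psi=f(H^{(k)})\delta_1$. Because $\nu'\leq\nu$ gives $B_\nu\subset B_{\nu'}$, we have $I_{\delta_1}^{(k)}(T^{-1},B_{\nu'})\geq I_{\delta_1}^{(k)}(T^{-1},B_\nu)$, so every bound from Lemma \ref{7.1.6} stays valid after replacing $B_{\nu'}$ by $B_\nu$. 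I would also use $\mu_\psi^{(k)}(d\lambda)=|f(\lambda)|^2\,\mu_{\delta_1}^{(k)}(d\lambda)$, which follows from $m_\psi^{(k)}(z)=(\delta_1,|f(H^{(k)})|^2(H^{(k)}-z)^{-1}\delta_1)$; together with $\sup|f|\leq 1$ this yields both $A:=\mu_\psi^{(k)}(B_\nu)>0$ (positive since $f(x_0)\neq0$ and the spectrum on $(0,4)$ is singular continuous of full support by Lemma \ref{former-result}) and the pointwise comparison $0\leq{\rm Im}\,m_\psi^{(k)}(E+i\epsilon)\leq{\rm Im}\,m_{\delta_1}^{(k)}(E+i\epsilon)$, whence $I_\psi^{(k)}(\epsilon,B_\nu)\leq I_{\delta_1}^{(k)}(\epsilon,B_\nu)$.

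For the term $C_6 I_{\delta_1}^{(k)}(T^{-1},B_\nu)^{-p}$ I would apply Lemma \ref{6.1.1} with $B=B_\nu$ and this $A>0$. It gives $P_\psi^{(k)}(\{M_T\sim\infty\},T)\geq A/2$ with $M_T=A^2/(16 J_\psi^{(k)}(T^{-1},B_\nu))$, so restricting the moment sum to $n\geq M_T$ yields $\langle|X|^p\rangle_\psi^{(k)}(T)\geq (A/2)M_T^{p}$. Lemma \ref{pro612} bounds $J_\psi^{(k)}(T^{-1},B_\nu)\leq C_3 I_\psi^{(k)}(T^{-1},B_\nu)$, and the comparison $I_\psi^{(k)}\leq I_{\delta_1}^{(k)}$ above gives $M_T\geq A^2/(16C_3 I_{\delta_1}^{(k)}(T^{-1},B_\nu))$; raising to the $p$-th power produces the first term (for $N$, hence $T$, large enough that $M_T\geq 1$).

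For the second term I would restrict the moment sum to the two dynamical regions of Lemma \ref{7.1.6}. Part $(3)$, valid for all $T\geq L_N/4$, places mass at least $C_5^{-N}L_N(T^{-1}+I_{\delta_1}^{(k)}(T^{-1},B_\nu))\prod_{j=1}^{N-1}L_j^{(1-\Gamma)/\Gamma}$ on $\{L_N/4\sim L_N/2\}$; since $n\geq L_N/4$ there, multiplying by $(L_N/4)^p$ and dropping $T^{-1}$ gives a bound of shape $c\,L_N^{p+1}\,I_{\delta_1}^{(k)}\prod_{j=1}^{N-1}L_j^{(1-\Gamma)/\Gamma}$. Using instead part $(1)$ (for $L_N\leq T\leq L_{N+1}/4$) or part $(2)$ (for $L_N/4\leq T\leq L_N$, where $L_N\geq T$ lets me bound $T^pL_N\geq T^{p+1}$), the mass on $\{T\sim\infty\}$ together with $n\geq T$ gives $c\,T^{p+1}\,I_{\delta_1}^{(k)}\prod_{j=1}^{N}L_j^{(\Gamma-1)/\Gamma}$.

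The last and, I expect, most delicate step is to turn the partial products and the factors $C_5^{\pm N}$ into a single correction $L_N^{q_N}$ with $q_N\to 0$. Writing $L_n=2^{n^n}$, one has $\log_2\prod_{j=1}^{N-1}L_j=\sum_{j=1}^{N-1}j^j$ and $\log_2 L_N=N^N$, and because $j^j$ grows super-exponentially, $\sum_{j=1}^{N-1}j^j/N^N\to 0$; likewise $N\log_2 C_5/N^N\to 0$. Hence $C_5^{-N}\prod_{j=1}^{N-1}L_j^{(1-\Gamma)/\Gamma}=L_N^{q_N^{(2)}}$ and $C_5^{-N}\prod_{j=1}^{N}L_j^{(\Gamma-1)/\Gamma}=L_N^{(\Gamma-1)/\Gamma+q_N^{(3)}}$ with $q_N^{(2)},q_N^{(3)}\to 0$. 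Taking $q_N=\min\{q_N^{(2)},q_N^{(3)}\}$, which still tends to $0$, and using $L_N>1$ to lower the exponents preserves both lower bounds, so both pieces hold with this common $q_N$; summing the three bounds and renaming the constant gives the claim. The main obstacle is exactly this bookkeeping: one must verify that the super-exponential growth of $L_n$ renders every accumulated geometric factor subpolynomial in $L_N$, so that only the exponents $p+1$ and $(\Gamma-1)/\Gamma$ survive.
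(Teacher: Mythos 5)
Your proposal is correct and follows essentially the same route as the paper: the $I_{\delta_1}^{(k)}(T^{-1},B_\nu)^{-p}$ term via Lemma \ref{6.1.1} combined with Lemma \ref{pro612}, the other two terms via the three cases of Lemma \ref{7.1.6} (after observing that a first-kind $f$ is also second kind), and the absorption of $C_5^{\pm N}\prod_j L_j^{(\Gamma-1)/\Gamma}$ into $L_N^{q_N}$ using the super-exponential growth of $L_n=2^{n^n}$. The only cosmetic differences are that you pass through $J_\psi^{(k)}\leq C_3 I_\psi^{(k)}\leq C_3 I_{\delta_1}^{(k)}$ where the paper uses $J_\psi^{(k)}\leq J_{\delta_1}^{(k)}\leq C_3 I_{\delta_1}^{(k)}$, and that you spell out the $\nu$-shrinking and the positivity of $\mu_\psi^{(k)}(B_\nu)$, which the paper leaves implicit.
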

\begin{Proof}\rm
Let $M\in \mathbb{N}$. Then it follows that  
$\langle |X|^p \rangle_{\psi}^{(k)}(T)
\geq
M^pP_{\psi}^{(k)}(\{M \sim \infty \},T)$. 
Lemma $\ref{6.1.1}$ implies that 
\[
\langle |X|^p \rangle_{\psi}^{(k)}(T)
\geq
M_T^pP_{\psi}^{(k)}(\{M_T \sim \infty \},T)
\geq C_6^{\prime}J_{\psi}^{(k)}(T^{-1},B_{\nu})^{-p}
\geq C_6^{\prime}J_{\delta_1}^{(k)}(T^{-1},B_{\nu})^{-p}.
\]
By $(\ref{RDX})$, we have 
\begin{equation}
\langle |X|^p \rangle_{\psi}^{(k)}(T)\geq C_6^{\prime\prime}I_{\delta_1}^{(k)}(T^{-1},B_{\nu})^{-p}.
\label{WER}
\end{equation}
Note that $f$ is the second kind. 
For $\frac{L_N}{4}\leq T \leq L_N$, by $(\ref{621})$ we have
\begin{eqnarray}
\langle |X|^p \rangle_{\psi}^{(k)}(T)&\geq& 
T^p
P_{\psi}^{(k)}(\{T \sim \infty\}, T)
\nonumber\\
&\geq&
4C_5^{-(N+1)}T^{p+1}
I_{\delta_1}^{(k)}(T^{-1}, B_{\nu})
\prod_{j=1}^{N}L_j^{\frac{\Gamma-1}{\Gamma}}.
\nonumber
\end{eqnarray}
Let $q_N>0 $ satisfy 
\[
L_N^{q_N}=C_5^{-(N+1)}\prod_{j=1}^{N-1}L_j^{\frac{\Gamma-1}{\Gamma}}.
\]
Then $\lim_{N \rightarrow \infty}q_N=0$ and we see that for $\frac{L_N}{4}\leq T \leq L_N$, 
\begin{eqnarray}
\langle |X|^p \rangle_{\psi}^{(k)}(T)\geq
4T^{p+1}
L_N^{\frac{\Gamma-1}{\Gamma}+q_N}
I_{\delta_1}^{(k)}(T^{-1}, B_{\nu}).
\label{REW}
\end{eqnarray}
For $L_N\leq T \leq \cfrac{L_{N+1}}{4}$, by $(\ref{620})$ we have
\begin{eqnarray}
\langle |X|^p \rangle_{\psi}^{(k)}(T)&\geq&
T^p
P_{\psi}^{(k)}(\{T \sim \infty\}, T)
\nonumber\\
&\geq&
C_5^{-(N+1)}T^{p+1}
I_{\delta_1}^{(k)}(T^{-1}, B_{\nu})
\prod_{j=1}^{N}L_j^{\frac{\Gamma-1}{\Gamma}}
\nonumber\\
&\geq&
T^{p+1}
L_N^{\frac{\Gamma-1}{\Gamma}+q_N}
I_{\delta_1}^{(k)}(T^{-1}, B_{\nu}).
\label{XXXX}
\end{eqnarray}
By $(\ref{REW})$ and $(\ref{XXXX})$, we see that for 
$\cfrac{L_N}{4}\leq T \leq \cfrac{L_{N+1}}{4}$, 
\begin{eqnarray}
\langle |X|^p \rangle_{\psi}^{(k)}(T)\geq
T^{p+1}
L_N^{\frac{\Gamma-1}{\Gamma}+q_N}
I_{\delta_1}^{(k)}(T^{-1}, B_{\nu}).
\label{HFS}
\end{eqnarray}
For $\cfrac{L_N}{4}\leq T$, by $(\ref{622})$ we see that
\begin{eqnarray}
\langle |X|^p \rangle_{\psi}^{(k)}(T)&\geq&
T^p
P_{\psi}^{(k)}(\{T \sim \infty\}, T)
\nonumber\\
&\geq&
C_5^{-N}L_N^{p+1}I_{\delta_1}^{(k)}(T^{-1}, B_{\nu})
\prod_{j=1}^{N-1}L_j^{\frac{\Gamma-1}{\Gamma}}
\nonumber\\
&\geq&
L_N^{p+1+q_N}I_{\delta_1}^{(k)}(T^{-1}, B_{\nu}).
\label{IJN}
\end{eqnarray}
$(\ref{WER})$, $(\ref{HFS})$, and $(\ref{IJN})$ imply our assertion.
\qed
\end{Proof}
\begin{Lemma}
Let $f$ be the first kind with $\sup_{x}|f(x)|\leq 1$ and $\psi = f(H^{(k)})\delta_1$. Then
\begin{eqnarray}
\beta_{\psi}^{(k)}(p)\geq \cfrac{p+1}{p+\frac{1}{\Gamma}}.
\label{WWWWW}
\end{eqnarray}
\end{Lemma}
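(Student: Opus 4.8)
The plan is to feed the two-term lower bound of Lemma~\ref{618} into the definition of $\beta_{\psi}^{(k)}(p)$ and optimize twice: first over the (uncontrolled) size of $I_{\delta_1}^{(k)}(T^{-1},B_\nu)$, and then over the time scale. Write $I=I_{\delta_1}^{(k)}(T^{-1},B_\nu)>0$ and, for $\frac{L_N}{4}<T<\frac{L_{N+1}}{4}$, abbreviate
\[
a=C_6,\qquad b=C_6\Bigl(L_N^{p+1+q_N}+T^{p+1}L_N^{\frac{\Gamma-1}{\Gamma}+q_N}\Bigr),
\]
so that Lemma~\ref{618} reads $\langle|X|^p\rangle_{\psi}^{(k)}(T)\ge aI^{-p}+bI$. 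We do not control $I$ directly, but the function $I\mapsto aI^{-p}+bI$ on $I>0$ attains its minimum at $I=(pa/b)^{1/(p+1)}$, with value $c(p)\,a^{1/(p+1)}b^{p/(p+1)}$ where $c(p)=p^{-p/(p+1)}+p^{1/(p+1)}>0$ (weighted AM--GM, or one line of calculus). Hence, whatever the actual value of $I$,
\[
\langle|X|^p\rangle_{\psi}^{(k)}(T)\ge c(p)\,C_6^{1/(p+1)}\,b^{p/(p+1)}.
\]

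Next I would take logarithms and divide by $\log T$. The multiplicative constants contribute $O(1/\log T)\to0$, so using $\log(u+v)\ge\max\{\log u,\log v\}$ on $b$,
\[
\frac{\log\langle|X|^p\rangle_{\psi}^{(k)}(T)}{\log T}\ \ge\ \frac{p}{p+1}\,\frac{\max\bigl\{(p+1+q_N)\log L_N,\ (p+1)\log T+(\tfrac{\Gamma-1}{\Gamma}+q_N)\log L_N\bigr\}}{\log T}+o(1).
\]
Introducing the scale ratio $\theta=\log T/\log L_N$ (so $\theta\ge 1-\tfrac{\log4}{\log L_N}\to1$ for $T\ge L_N/4$) and recalling $q_N\to0$ as $N\to\infty$, the right-hand side equals, up to an $O(q_N)$ error that is uniform for $\theta\ge\frac12$, the function
\[
F(\theta)=\frac{p}{p+1}\cdot\frac{\max\{\,p+1,\ (p+1)\theta+\frac{\Gamma-1}{\Gamma}\,\}}{\theta}.
\]

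It then remains to minimize $F$ over the admissible scales. Since $\frac{\Gamma-1}{\Gamma}=1-\frac1\Gamma<0$, the second entry of the maximum overtakes $p+1$ exactly at $\theta^\ast=\frac{p+1/\Gamma}{p+1}>1$: for $\theta\le\theta^\ast$ one has $F(\theta)=p/\theta$, which is decreasing, while for $\theta\ge\theta^\ast$ one has $F(\theta)=\frac{p}{p+1}\bigl((p+1)-\frac{1/\Gamma-1}{\theta}\bigr)$, which is increasing; hence $F$ attains its minimum at $\theta^\ast$, with value $F(\theta^\ast)=p/\theta^\ast=\frac{p(p+1)}{p+1/\Gamma}$. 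As this lower bound for the ratio holds for every admissible $T$ (each $T$ determines an $N$, and $q_N\to0$ as $T\to\infty$), passing to the $\liminf$ gives $\liminf_{T\to\infty}\frac{\log\langle|X|^p\rangle_{\psi}^{(k)}(T)}{\log T}\ge\frac{p(p+1)}{p+1/\Gamma}$, and dividing by $p$ yields $\beta_{\psi}^{(k)}(p)\ge\frac{p+1}{p+1/\Gamma}$. The main obstacle is precisely this two-stage optimization: the inner minimization over $I$ must be performed before one knows how $I$ scales in $T$, and then the outer optimization over $\theta$ must correctly locate the worst case $\theta^\ast$. Here the super-exponential growth $L_N=2^{N^N}$ (so that $\log L_{N+1}/\log L_N\to\infty$) is what guarantees that $\theta$ sweeps a neighbourhood of $[1,\infty)$ within a single block $[\frac{L_N}{4},\frac{L_{N+1}}{4})$, so the minimum of $F$ is genuinely felt by the $\liminf$ rather than being truncated at the block boundaries.
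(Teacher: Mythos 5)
Your proposal is correct and follows essentially the same route as the paper: it feeds the two-term bound of Lemma \ref{618} into the definition of $\beta_{\psi}^{(k)}(p)$, minimizes $x\mapsto x^{-p}+Kx$ to eliminate the uncontrolled quantity $I_{\delta_1}^{(k)}(T^{-1},B_\nu)$ (obtaining the same constant $c(p)=p^{-p/(p+1)}+p^{1/(p+1)}$), and then optimizes over the time scale. Your minimization of $F(\theta)$ at $\theta^\ast=\frac{p+1/\Gamma}{p+1}$ is exactly the paper's case split at $T=L_N^{A}$ with $A=\frac{p+\frac{1}{\Gamma}}{p+1}$, so the two arguments coincide.
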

\begin{Proof}\rm
By Lemma $\ref{618}$, for $x=I_{\delta_1}^{(k)}(T^{-1},B_\nu)$, 
we obtain that 
\[
\langle |X|^p \rangle_{\psi}^{(k)}(T)
\geq
C_6x^{-p}+
C_6
\left(
L_N^{p+1-q_N}
+T^{p+1}L_N^{\frac{\Gamma-1}{\Gamma}-q_N}
\right)x.
\]
Let $f(x)=x^{-p}+Kx$. Then  
$\begin{displaystyle}
\inf_{x>0}f(x)=
c(p)K^{\frac{p}{p+1}}
\end{displaystyle}$, where $c(p)=p^{-\frac{p}{p+1}}+p^{\frac{1}{p+1}}$.
Let $\cfrac{L_N}{4} \leq T \leq \cfrac{L_{N+1}}{4}$. Then there exists $C_6^{\prime}=C_6^{\prime}(p)>0$ such that
\begin{eqnarray}
\langle |X|^p \rangle_{\psi}^{(k)}(T)
&\geq&
c(p)C_6
\left(
L_N^{p+1-q_N}
+T^{p+1}L_N^{\frac{\Gamma-1}{\Gamma}-q_N}
\right)^{\frac{p}{p+1}}
\nonumber
\\
&\geq&
C_6^{\prime}
L_N^{-\frac{p}{p+1}q_N}
\left(L_N^{p+1}+T^{p+1}L_N^{\frac{\Gamma-1}{\Gamma}}\right)^\frac{p}{p+1}.
\nonumber
\end{eqnarray}
For $\cfrac{L_N}{4} \leq T \leq L_N^A$ with $A=\cfrac{p+\frac{1}{\Gamma}}{p+1}$, 
we have 
\begin{eqnarray}
\langle |X|^p \rangle_{\psi}^{(k)}(T)
\geq
C_6^{\prime}L_N^{-\frac{p}{p+1}q_N}L_N^p
\geq
C_6^{\prime}L_N^{-\frac{p}{p+1}q_N}T^{\frac{p}{A}}.
\label{UHB}
\end{eqnarray}
For $L_N^A \leq T \leq \cfrac{L_{N+1}}{4}$, we have 
\begin{eqnarray}
\langle |X|^p \rangle_{\psi}^{(k)}(T)
\geq
C_6^{\prime}L_N^{-\frac{p}{p+1}q_N}T^pL_N^{\frac{\Gamma-1}{\Gamma}\frac{p}{p+1}}
\geq
C_6^{\prime}L_N^{-\frac{p}{p+1}q_N}T^{\frac{p}{A}}.
\label{UHB1}
\end{eqnarray}
$(\ref{UHB})$ and $(\ref{UHB1})$ imply that for sufficiently large $T>0$ and any $\epsilon>0$,
\begin{eqnarray}
\langle |X|^p \rangle_{\psi}^{(k)}(T)\geq
C_6^{\prime}T^{\frac{p}{A}-\epsilon}.
\nonumber
\end{eqnarray}
Therefore we obtain that 
\[
\beta_{\psi}^{(k)}(p)=
\cfrac{1}{p}\liminf_{T \rightarrow \infty}\cfrac{{\rm log}\langle |X|^p\rangle_{\psi}^{(k)}(T)}{{\rm log}T}\geq \cfrac{p+1}{p+\frac{1}{\Gamma}}-\cfrac{\epsilon}{p}.
\]
This implies our assertion.
\qed
\end{Proof}

\subsection{Upper bound of intermittency function}
\begin{Lemma}
Let $f$ be the first kind, $\psi = f(H^{(k)})\delta_1$, and $p>0$. 
Then there exists $C_7=C_7(p)>0$ such that 
for $L_N \leq T \leq L_N^{\frac{1}{\Gamma}}$ with $N$ sufficiently large, 
\begin{eqnarray}
\sum_{n \geq 2L_N}n^pa_{\psi}^{(k)}(n,T)\leq C_7T^{p+1}L_N^{-\frac{1}{\Gamma}}.
\end{eqnarray}
\end{Lemma}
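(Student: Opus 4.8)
The plan is to pass from the time-averaged quantity to a resolvent integral via Lemma \ref{7.1.5}, reduce everything to the Green's function $w_z(n)=(H^{(k)}-z)^{-1}\delta_1(n)$, and then show that the $\ell^2$ (Weyl) solution carries almost no mass beyond $2L_N$ because it has been suppressed by the transmission factor through the first $N$ barriers. First I would write, with $z=E+i\epsilon$ and $\epsilon=(2T)^{-1}$,
\[
\sum_{n\geq 2L_N}n^p a_{\psi}^{(k)}(n,T)=\frac{\epsilon}{\pi}\int_{\mathbb{R}}\sum_{n\geq 2L_N}n^p\,|(H^{(k)}-z)^{-1}\psi(n)|^2\,dE .
\]
Since $\psi=f(H^{(k)})\delta_1$ with $f$ of the first kind, one has $(H^{(k)}-z)^{-1}\psi=f(H^{(k)})w_z$, and the kernel of $f(H^{(k)})$ decays faster than any power by Lemma \ref{6.2.4}; using this together with $\mathrm{supp}\,\mu_\psi^{(k)}\subset\mathrm{supp}\,f\subset B_\nu$ I would reduce the integrand to $\sum_{m\geq 2L_N}m^p|w_z(m)|^2$ integrated over $E\in B_\nu$, the remaining range of $E$ contributing a super-polynomially small error (there $(H^{(k)}-z)^{-1}\psi(n)$ for large $n$ is controlled by the Combes--Thomas bound of Lemma \ref{6.2} and the rapid decay of $\psi$).

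For the remaining sum I would invoke the discrete Green's identity. On $[M,\infty)$ with $M=2L_N\geq 2$ the function $w_z$ solves $\tilde H^{(k)}w_z=zw_z$, so summation by parts gives the current identity $\epsilon\sum_{m\geq M}|w_z(m)|^2=a(M-1)\,\mathrm{Im}\big(\overline{w_z(M-1)}w_z(M)\big)\leq |w_z(M-1)|\,|w_z(M)|$. Because $w_z\in\ell^2$ and $\mathrm{Im}\,z=\epsilon=(2T)^{-1}$, the tail sums $\sum_{m\geq n}|w_z(m)|^2$ decay on the scale $T$ inside the free plateau $L_N<m<L_{N+1}$; an Abel summation against the weight $m^p$, whose effective range is $m\lesssim T$ since $M=2L_N\leq 2T$, then yields $\sum_{m\geq 2L_N}m^p|w_z(m)|^2\lesssim T^{p+1}|w_z(2L_N-1)|\,|w_z(2L_N)|$. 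Integrating in $E$, using $\epsilon=(2T)^{-1}$ and Cauchy--Schwarz in $E$, this reduces the whole estimate to the single-site transmission bound $\int_{B_\nu}|w_z(2L_N)|^2\,dE\lesssim \prod_{j=1}^{N}L_j^{\frac{\Gamma-1}{\Gamma}}$.

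The main obstacle is precisely this last bound. Writing $\binom{w_z(n)}{w_z(n+1)}=S_z(n+N(k)-1,N(k))\binom{1}{m_{\delta_1}^{(k)}(z)}$ as in Lemma \ref{7.1.6}, the vector $\binom{1}{m_{\delta_1}^{(k)}(z)}$ is the contracting (Weyl) direction, so $|w_z(2L_N)|$ is governed by $\sigma_{\min}(S_z)\sim |\det S_z|/\|S_z\|$; an upper bound on it therefore requires a \emph{lower} bound $\|S_z(2L_N)\|\gtrsim\prod_{j\leq N}L_j^{(1-\Gamma)/(2\Gamma)}$, i.e. that the product of the barrier matrices $S_z(L_j+1,L_j)$, each of norm $\asymp L_j^{(1-\Gamma)/(2\Gamma)}$ by the computation behind Lemma \ref{7.1.4}, does not suffer resonant cancellation across the sparse gaps. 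Since Lemma \ref{7.1.4} furnishes only the matching \emph{upper} bound on $\|S_z^{-1}\|$, the required lower bound cannot hold pointwise in $E$ and must be extracted after integration over $E\in B_\nu$, the Herglotz weight $1+|m_{\delta_1}^{(k)}(z)|^2$ being controlled through $\int_{B_\nu}\mathrm{Im}\,m_{\delta_1}^{(k)}\,dE\leq\pi$ and the admissible range $L_N\leq T\leq L_N^{1/\Gamma}$; this energy-averaging argument, following Tcheremchantsev's treatment of sparse potentials, is the technical heart of the proof. Finally I would assemble the pieces: $\int_{B_\nu}|w_z(2L_N)|^2\,dE\lesssim \prod_{j\leq N}L_j^{(\Gamma-1)/\Gamma}\leq L_N^{\,1-\frac1\Gamma}$, where the factors $\prod_{j<N}L_j^{(\Gamma-1)/\Gamma}\leq1$ only help and the subexponential constants of the form $C^{N}$ are absorbed for $N$ large by the super-fast growth of $L_N$, whence $\sum_{n\geq 2L_N}n^p a_\psi^{(k)}(n,T)\lesssim T^p L_N^{\,1-1/\Gamma}\leq T^{p+1}L_N^{-1/\Gamma}$ by $L_N\leq T$, which is the assertion.
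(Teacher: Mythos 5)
Your first half matches the paper: converting the time average to $\frac{\epsilon}{\pi}\int dE\,|(H^{(k)}-z)^{-1}\psi(n)|^2$ with $\epsilon=(2T)^{-1}$ via Lemma \ref{7.1.5}, discarding $n>T^3$ and $E\notin B_{\nu/2}$ by the Helffer--Sj\"ostrand kernel bounds of Lemma \ref{6.2.4}, and reducing through the rapidly decaying kernel of $f(H^{(k)})$ to a weighted sum of $|w_z(m)|^2$ for the bare Green's function is exactly how the paper begins. The gap is in the step you yourself flag as ``the technical heart'': the bound $\int_{B_\nu}|w_z(2L_N)|^2\,dE\lesssim\prod_{j\le N}L_j^{(\Gamma-1)/\Gamma}$. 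You correctly observe that your route requires a \emph{lower} bound on $\|S_z(2L_N)\|$, that Lemma \ref{7.1.4} supplies only the matching upper bound on $\|S_z^{-1}\|$, and that no pointwise-in-$E$ lower bound can hold; but you then defer the entire energy-averaging argument to ``Tcheremchantsev's treatment'' without carrying it out. As written, the decisive estimate is asserted, not proved, and the Abel-summation step producing $T^{p+1}|w_z(2L_N-1)||w_z(2L_N)|$ from the current identity is also only sketched (the tail-sum decay on scale $T$ is clear for a pure decaying exponential, i.e.\ after truncation, but you never truncate).

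The paper avoids the transfer-matrix product over all $N$ barriers entirely. It replaces $H^{(k)}$ by a truncated operator $H^{(k)}_N$ that coincides with $H^{(k)}$ up to site $L_N$ and is the free Laplacian beyond; the resolvent difference applied to $\delta_1$ is $O(T\exp(-c\epsilon L_N^{\beta}))$ because the next barrier sits at $L_{N+1}\gg T^3$, so this error is negligible. For the truncated solution $\phi=\chi_z(H^{(k)}_N)\delta_1$ only the \emph{single} barrier at $L_N$ enters: the difference equation at $n=L_N$ gives $|\phi(L_N)|^2\le B'L_N^{(\Gamma-1)/\Gamma}\bigl(|\phi(L_N-1)|^2+|\phi(L_N+1)|^2\bigr)$, the bounded transfer matrices over the free stretches of length $\sim L_N$ on either side give $|\phi(L_N\pm1)|^2\le \frac{2B}{\epsilon L_N}\,\mathrm{Im}\,F_N(E+i\epsilon)$ from $\|\phi\|^2=\epsilon^{-1}\mathrm{Im}\,F_N$, and combining yields the pointwise bound $|\phi(L_N)|^2\le 2BB'\,\epsilon^{-1}L_N^{-1/\Gamma}\,\mathrm{Im}\,F_N(E+i\epsilon)$, which is closed by the spectral-averaging inequality $\int\mathrm{Im}\,F_N\,dE\le\pi$ and the exponential decay of $\phi$ beyond $L_N$ to produce $T^{p+1}L_N^{-1/\Gamma}$. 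If you want to salvage your outline, the truncation plus this local two-sided argument at $L_N$ is the missing mechanism; the averaged lower bound on $\|S_z(2L_N)\|$ you propose is both harder and unnecessary.
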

\begin{Proof}\rm
We have
\begin{eqnarray}
\sum_{n \geq 2L_m}n^pa_{\psi}^{(k)}(n,T)
=
\sum_{n=2L_m}^{T^3}n^pa_{\psi}^{(k)}(n,T)
+
\sum_{n> T^3}n^pa_{\psi}^{(k)}(n,T).
\nonumber
\end{eqnarray}
Let $G_t(x)= e^{-itx}$. 
Lemma $\ref{6.2.4}$ shows that for any $l>1$, there exists 
$C_7^{(4)}=C_7^{(4)}(l)>0$ such that
\begin{eqnarray}
\sum_{n> T^3}n^p a_{\psi}^{(k)}
&=&
\sum_{n> T^3}n^p\cfrac{1}{T}\int_{\mathbb{R}}dt\; e^{-\frac{t}{T}}
|(\delta_n, G_t(H^{(k)})f(H^{(k)})\delta_1)|^2
\nonumber
\\
&\leq&
\sum_{n> T^3}n^p\cfrac{C_2}{T}\int_{\mathbb{R}}dt\; e^{-\frac{t}{T}}
\opnorm{G_tf}_{2l+3} n^{-l}
\nonumber
\\
&\leq&
\sum_{n> T^3}n^p
\cfrac{C_7^{(4)}}{T}\int_{\mathbb{R}}dt\; e^{-\frac{t}{T}}
\; t^{2l+3}
n^{-l}
\nonumber
\\
&\leq&
C_7^{(4)}
T^{2l+3}
\sum_{n> T^3}n^{p-l}
\nonumber
\\
&\leq&
C_7^{(4)}T^{-l+3p+6}.
\nonumber
\end{eqnarray}
We take $l$ large enough, so it is sufficient to prove 
$\begin{displaystyle}
\sum_{n=2L_N}^{T^3}n^pa_{\psi}^{(k)}(n,T)
\leq
C_7T^{p+1}L_N^{-\frac{1}{\Gamma}}.
\end{displaystyle}$
Since $f$ is the first kind, $f \in C^{\infty}_0(B_{\nu})$. 
By Lemma $\ref{7.1.5}$,  we have
\begin{eqnarray}
\sum_{n=2L_N}^{T^3}n^pa_{\psi}^{(k)}(n,T)
&=&
\sum_{n=2L_N}^{T^3}n^p
\cfrac{\epsilon}{\pi}
\int_{B_{\frac{\nu}{2}}}dE \; |(H^{(k)}-E-i\epsilon)^{-1}\psi(n)|^2
\nonumber
\\
&+&
\sum_{n=2L_N}^{T^3}n^p
\cfrac{\epsilon}{\pi}
\int_{\mathbb{R} \setminus B_{\frac{\nu}{2}}}dE \; |(H^{(k)}-E-i\epsilon)^{-1}\psi(n)|^2,
\nonumber
\qquad \epsilon=(2T)^{-1}.
\end{eqnarray}
Let $\chi_z(x) = (x-z)^{-1}$. Then Lemma $\ref{6.2.4}$ shows that for any $l>0$, there exists 
$C_7^{(3)}=C_7^{(3)}(l)>0$ such that
\begin{eqnarray}
\cfrac{\epsilon}{\pi}
\sum_{n=2L_N}^{T^3}n^p
\int_{-\infty}^{\frac{\nu}{2}}dE \; |(H^{(k)}-E-i\epsilon)\psi(n)|^2
&\leq&
\cfrac{\epsilon}{\pi}
\sum_{n=2L_N}^{T^3}n^p
\int_{-\infty}^{\frac{\nu}{2}}dE \;
|(\delta_n, \chi_{E+i\epsilon}(H^{(k)})f(H^{(k)})\delta_1)|^2
\nonumber
\\
&\leq&
C_2
\cfrac{\epsilon}{\pi}
\sum_{n=2L_N}^{T^3}n^p
\int_{-\infty}^{\frac{\nu}{2}}dE \;
\opnorm{\chi_{E+i\epsilon}f}^{2}_{2l+3}n^{-2l}
\nonumber
\\
&\leq&
C_2
\cfrac{\epsilon}{\pi}
\sum_{n=2L_N}^{T^3}n^{p-2l}
\int_{-\infty}^{\frac{\nu}{2}}dE \;
\cfrac
{C_7^{(3)}}
{
(E-\nu)^2+\epsilon^2
}
\nonumber
\\
&\leq&
C_2C_7^{(3)}L_N^{-2l+p+1}.
\label{FDS}
\end{eqnarray}
Similarly, there exists $D_7^{(3)}=D_7^{(3)}(l)>0$ such that
\begin{eqnarray}
\cfrac{\epsilon}{\pi}
\sum_{n=2L_N}^{T^3}n^p
\int_{4-\frac{\nu}{2}}^{\infty}dE \; |(H^{(k)}-E-i\epsilon)^{-1}\psi(n)|^2
\leq
C_2D_7^{(3)}L_N^{-2l+p+1}.
\label{FDS1}
\end{eqnarray}
$(\ref{FDS})$ and $(\ref{FDS1})$ imply that 
\[
\sum_{n=2L_N}^{T^3}n^p
\cfrac{\epsilon}{\pi}
\int_{\mathbb{R} \setminus B_{\frac{\nu}{2}}}dE \; |(H^{(k)}-E-i\epsilon)^{-1}\psi(n)|^2
\leq
\max\{C_2C_7^{(3)}, C_2D_7^{(3)}\}L_N^{-2l+p+1}.
\]
We take $l$ large enough, so it is sufficient to prove 
\begin{eqnarray}
\cfrac{\epsilon}{\pi}
\sum_{n=2L_N}^{T^3}n^p
\int_{B_{\frac{\nu}{2}}}dE \; |(H^{(k)}-E-i\epsilon)^{-1}\psi(n)|^2
\leq
C_7T^{p+1}L_N^{-\frac{1}{\Gamma}},
\qquad \epsilon=(2T)^{-1}.
\nonumber
\end{eqnarray}
Lemma $\ref{6.2.4}$ implies that there exists $C=C(l,f)>0$ such that
\begin{eqnarray}
|(H^{(k)}-E-i\epsilon)^{-1}\psi(n)|^2
\leq
C \sum_{m=1}^{\infty}
(1+|n-m|^2)^{-l}
|\chi_{E+i\epsilon}(H^{(k)})\delta_1(m)|^2.
\nonumber
\end{eqnarray}
Therefore it is sufficient to prove 
\[
\cfrac{\epsilon}{\pi}
\sum_{n=2L_N}^{T^3}n^p
\int_{B_{\frac{\nu}{2}}}dE \; 
 \sum_{m=1}^{\infty}
(1+|n-m|^2)^{-l}
|\chi_{E+i\epsilon}(H^{(k)})\delta_1(m)|^2
\leq
C_7T^{p+1}L_N^{-\frac{1}{\Gamma}},
\qquad \epsilon=(2T)^{-1}.
\nonumber
\]
We see that for any $l>1$, by Lemma $\ref{7.1.5}$, there exists 
$C_7^{(2)}=C_7^{(2)}(l)>0$ such that 
\begin{eqnarray}
&&
\cfrac{\epsilon}{\pi}
\sum_{n=2L_N}^{T^3}n^p
\int_{B_{\frac{\nu}{2}}}dE \; 
\sum_{m=1}^{L_N}
(1+|n-m|^2)^{-l}
|\chi_{E+i\epsilon}(H^{(k)})\delta_1(m)|^2
\nonumber
\\
&&\leq
\sum_{n=2L_N}^{T^3}n^p
L_N^{-2l}
\sum_{m=1}^{L_N}
\cfrac{\epsilon}{\pi}
\int_{B_{\frac{\nu}{2}}}dE \; 
|\chi_{E+i\epsilon}(H^{(k)})\delta_1(m)|^2
\nonumber
\\
&&\leq
\sum_{n=2L_N}^{T^3}n^p
L_N^{-2l}
\nonumber
\\
&&\leq
C_7^{(2)}
T^{3(p+1)}
L_N^{-2l}, \qquad \epsilon=(2T)^{-1}.
\label{oiu}
\end{eqnarray}
Similarly,  we have
\begin{eqnarray}
\cfrac{\epsilon}{\pi}
\sum_{n=2L_N}^{T^3}n^p
\int_{B_{\frac{\nu}{2}}}dE \; 
\sum_{m=T^3+L_N}^{\infty}
(1+|n-m|^2)^{-l}
|\chi_{E+i\epsilon}(H^{(k)})\delta_1(m)|^2
\leq
D_{7}^{(2)}T^{3(p+1)}
L_N^{-2l}.
\label{oiu1}
\end{eqnarray}
By $(\ref{oiu})$ and $(\ref{oiu1})$,  it is sufficient to prove 
\begin{eqnarray}
\cfrac{\epsilon}{\pi}
\sum_{n=2L_N}^{T^3}n^p
\int_{B_{\frac{\nu}{2}}}dE \; 
\sum_{m=L_N}^{T^3+L_N}
(1+|n-m|^2)^{-l}
|\chi_{E+i\epsilon}(H^{(k)})\delta_1(m)|^2
\leq
C_7T^{p+1}L_N^{-\frac{1}{\Gamma}},\qquad \epsilon=(2T)^{-1}.
\nonumber
\end{eqnarray}
Let $A^{(k)}_N$ and $D^{(k)}_N:l^2(\mathbb{N})\rightarrow l^2(\mathbb{N})$ be 
\begin{displaymath}
A^{(k)}_N=
\renewcommand{\arraystretch}{1.8}
\left(
\begin{tabular}{Wc{10mm}Wc{10mm}Wc{10mm}Wc{10mm}Wc{10mm}|
Wc{10mm}Wc{10mm}Wc{10mm}cccccccccc}
0&$a_{k}({1})$&&&&&\\
$a_{k}({1}$)&0&$a_{k}({2})$&&&&&&\\
&$a_{k}({2})$&$\ddots$&$\ddots$&&&&\\
&&$\ddots$&0&$a_{k}(L_N)$&&&&\\
&&&$a_{k}(L_N)$&0&1\\
\hline
&&&&1&0&1\\
&&&&&1&0&1\\
&&&&&&1&$\ddots$&$\ddots$\\
&&&&&&&$\ddots$&\\
\end{tabular}\right)
\renewcommand{\arraystretch}{1}
\end{displaymath}
and
\begin{displaymath}
D^{(k)}_N=
\renewcommand{\arraystretch}{1.8}
\left(
\begin{tabular}{Wc{8mm}Wc{8mm}Wc{8mm}Wc{10mm}|Wc{8mm}cccccc}
$d_{k}({1})$&&&&\\
&$d_{k}({2})$&&\\
&&$\ddots$&\\
&&&$d_{k}({L_N})$&\\
\hline
&&&&2\\
&&&&&2\\
&&&&&&$\ddots$\\
\end{tabular}\right).
\renewcommand{\arraystretch}{1}
\end{displaymath}
Let $H_N^{(k)}= D^{(k)}_N-A^{(k)}_N$. We see that for any $l>1$ there exists 
$C^{\prime}=C^{\prime}(l)>0$ such that 
\begin{eqnarray}
&&
\cfrac{\epsilon}{\pi}
\sum_{n=2L_N}^{T^3}n^p
\int_{B_{\frac{\nu}{2}}}dE \; 
\sum_{m=L_N}^{T^3+L_N}
(1+|n-m|^2)^{-l}
|\chi_{E+i\epsilon}(H^{(k)})\delta_1(m)|^2
\nonumber
\\
&&=
\cfrac{\epsilon}{\pi}
\sum_{m=L_N}^{T^3+L_N}
\left(
\sum_{n=2L_N}^{T^3}n^p
(1+|n-m|^2)^{-l}
\right)
\int_{B_{\frac{\nu}{2}}}dE \; 
|\chi_{E+i\epsilon}(H^{(k)})\delta_1(m)|^2
\nonumber
\\
&&\leq
C^{\prime}
\cfrac{\epsilon}{\pi}
\sum_{m=L_N}^{T^3+L_N}
m^p
\int_{B_{\frac{\nu}{2}}}dE \; 
|\chi_{E+i\epsilon}(H^{(k)})\delta_1(m)|^2
\nonumber
\\
&&\leq
2C^{\prime}
\cfrac{\epsilon}{\pi}
\sum_{m=L_N}^{T^3+L_N}
m^p
\int_{B_{\frac{\nu}{2}}}dE \; 
|\chi_{E+i\epsilon}(H^{(k)})\delta_1(m)-\chi_{E+i\epsilon}(H^{(k)}_N)\delta_1(m)|^2
\nonumber
\\
&&+
2C^{\prime}
\cfrac{\epsilon}{\pi}
\sum_{m=L_N}^{T^3+L_N}
m^p
\int_{B_{\frac{\nu}{2}}}dE \; 
|\chi_{E+i\epsilon}(H^{(k)}_N)\delta_1(m)|^2.
\label{6.28}
\end{eqnarray}
By the resolvent equation, we have  
\begin{eqnarray}
&&
\|(\chi_{E+i\epsilon}(H^{(k)})-\chi_{E+i\epsilon}(H^{(k)}_N))\delta_1\|
\nonumber
\\
&&\leq
\cfrac{1}{\epsilon}
\|(H^{(k)}-H_N^{(k)})\chi_{E+i\epsilon}(H^{(k)}_N)\delta_1\|
\nonumber
\\
&&\leq
\cfrac{1}{\epsilon}
\|(D^{(k)}-D_N^{(k)})\chi_{E+i\epsilon}(H^{(k)}_N)\delta_1\|
+
\cfrac{1}{\epsilon}
\|(A^{(k)}-A^{(k)}_N)\chi_{E+i\epsilon}(H^{(k)}_N)\delta_1\|.
\label{6.29}
\end{eqnarray}
Let $z=E+i\epsilon$ and 
$\phi:\mathbb{Z}_{\geq 0}\rightarrow \mathbb{C}$ be 
$\phi(0)=1$, 
$\phi(n)= \chi_{z}(H^{(k)}_N)\delta_1(n)$ $(n\geq 1)$. Then 
it follows for
$n>L_N$ that 
\begin{eqnarray}
-\phi(n+1)+2\phi(n)-\phi(n-1)=z \phi(n).
\nonumber
\end{eqnarray}
This implies for $n>L_N$,
\begin{eqnarray}
\left(
\begin{array}{c}
\phi(n)\\
\phi(n+1)
\end{array}
\right)
=
\left(
\begin{array}{cc}
0&1\\
-1&2-z\\
\end{array}
\right)
\left(
\begin{array}{c}
\phi(n-1)\\
\phi(n)
\end{array}
\right).
\nonumber
\end{eqnarray}
Let $\lambda_\pm = \cfrac{2-z\pm \sqrt{(2-z)^2+4}}{2}$, then 
there exists $C_\pm$ such that 
\begin{eqnarray}
\phi(n)=C_+ \lambda_+^{n-L_N}+C_-\lambda_-^{n-L_N}.
\nonumber
\end{eqnarray}
Since $\epsilon>0$, we get $|\lambda_-|<1$ and $|\lambda_+|>1$.
Since $\|\phi \|_{l^2}<\infty$, $C_+=0$, $C_-=\phi(L_N)$ and there exists $c,c^{\prime}>0$ such that 
it follows for $0<E<4$ and $0<\epsilon<1$ that
\begin{equation}
e^{-c^{\prime}\epsilon}\leq |\lambda_-|\leq e^{-c\epsilon}.
\nonumber
\end{equation}
If $\beta>1$, then it follows for sufficiently large $N$ that 
\begin{eqnarray}
L_{N+1}-L_N^\beta>\cfrac{1}{2}L_{N+1}.
\nonumber
\end{eqnarray}
Therefore, we see that there exist $C^{\prime\prime}>0$ such that
\begin{eqnarray}
\|(D^{(k)}-D_N^{(k)})\chi_{E+i\epsilon}(H^{(k)}_N)\delta_1\|^2
&=&
\sum_{j=N+1}^{\infty}|d_k(L_j)-2|^2|\phi(L_j)|^2
\nonumber
\\
&\leq&
2|\phi(L_N)|^2
\sum_{j=N+1}^{\infty}L_j^{\frac{2(1-\Gamma)}{\Gamma}}
{\rm exp}(-2c\epsilon(L_j-L_N))
\nonumber
\\
&\leq&
2\epsilon^{-2}
\sum_{j=N+1}^{\infty}L_j^{\frac{2(1-\Gamma)}{\Gamma}}
{\rm exp}(-2c\epsilon(L_j-L_N^{\beta}))
{\rm exp}(-2c\epsilon(L_N^{\beta}-L_N))
\nonumber
\\
&\leq&
4T^2
\sum_{j=N+1}^{\infty}L_j^{\frac{2(1-\Gamma)}{\Gamma}}
{\rm exp}(-c\epsilon L_j)
{\rm exp}(-c\epsilon L_N^{\beta})
\nonumber
\\
&\leq&
C^{\prime\prime}{\rm exp}(-c\epsilon L_N^{\beta}),
\label{6.30}
\\
\nonumber
\end{eqnarray}
and
\begin{eqnarray}
\|(A^{(k)}-A^{(k)}_N)\chi_{E+i\epsilon}(H^{(k)}_N)\delta_1\|^2
&\leq&
\sum_{j=N+1}^{\infty}
|1-a_k(L_j)|^2
\{
|\phi(L_j)|^2+|\phi(L_j+1)|^2
\}
\nonumber
\\
&\leq&
C^{\prime\prime}T^2\sum_{j=N+1}^{\infty}
{\rm exp}(-2c\epsilon(L_j-L_N))
\nonumber
\\
&\leq&
C^{\prime\prime}T^2{\rm exp}(-c\epsilon L_N^{\beta}).
\label{6.31}
\end{eqnarray}
By $(\ref{6.29})$, $(\ref{6.30})$, and $(\ref{6.31})$, we see that there exists 
$C_7^{(1)}>0$ such that
\begin{eqnarray}
&&
\epsilon
\sum_{m=L_N}^{T^3+L_N}
m^p
\int_{B_{\frac{\nu}{2}}}dE \; 
|\chi_{E+i\epsilon}(H^{(k)})\delta_1(m)-\chi_{E+i\epsilon}(H^{(k)}_N)\delta_1(m)|^2
\nonumber
\\
&&\leq
\epsilon
\sum_{m=L_N}^{T^3+L_N}
m^p
\int_{B_{\frac{\nu}{2}}}dE \; 
\|\chi_{E+i\epsilon}(H^{(k)})\delta_1-\chi_{E+i\epsilon}(H^{(k)}_N)\delta_1\|^2
\nonumber
\\
&&\leq
C_7^{(1)}
T^{3(p+1)}{\rm exp}(-c\epsilon L_N^{\beta}),\qquad \epsilon=(2T)^{-1}.
\label{WAZ}
\end{eqnarray}
By $(\ref{6.28})$ and $(\ref{WAZ})$, it is sufficient to prove
\begin{eqnarray}
\epsilon
\sum_{m=L_N}^{T^3+L_N}
m^p
\int_{B_{\frac{\nu}{2}}}dE \; 
|\chi_{E+i\epsilon}(H^{(k)}_N)\delta_1(m)|^2
\leq
C_7T^{p+1}L_N^{-\frac{1}{\Gamma}}.
\nonumber
\end{eqnarray}
Let $F_N(z)= (\delta_1, \chi_z(H_N^{(k)})\delta_1)=(\delta_1,(H_N^{(k)}-z)^{-1}\delta_1)$.
We see that there exists $C^{\prime\prime\prime}>0$ such that
\begin{eqnarray}
\cfrac{1}{\epsilon}{\rm Im}F_N(E+i\epsilon)
=\|\chi_{E+i\epsilon}(H_N^{(k)})\delta_1\|^2
\geq
\sum_{m>L_N}
|\phi(m)|^2
\geq
\sum_{m>L_N}e^{-c^{\prime}\epsilon(m-L_N)}|\phi(L_N)|^2
\geq
\cfrac{C^{\prime\prime\prime}}{\epsilon}|\phi(L_N)|^2.
\nonumber
\end{eqnarray}
This implies that $|\phi(L_N)|\leq C^{\prime\prime\prime} {\rm Im}F_N(E+i\epsilon)$. 
Let $L_{N-1}<n<L_{N+1}$. Then
\[
\left\{
\begin{array}{ll}
-\phi(n+1)+2\phi(n)-\phi(n-1)=z\phi(n)&(n \neq L_N, L_N+1)
\\
-\sqrt{[L_N^{\frac{1-\Gamma}{\Gamma}}]}\phi(L_N+1)+([L_N^{\frac{1-\Gamma}{\Gamma}}]+2)\phi(L_N)-\phi(L_N-1)=z\phi(L_N)&(n= L_N)
\\
-\phi(L_N+2)+2\phi(L_N+1)-\sqrt{[L_N^{\frac{1-\Gamma}{\Gamma}}]}\phi(L_N)=z\phi(L_N+1)&(n =L_N+1).
\end{array}
\right.
\]
Let 
$\begin{displaystyle}
R
=
\left(
\begin{array}{cc}
0&1\\
-1&2-z\\
\end{array}
\right)
\end{displaystyle}$. Then it follows for $L_N+1<n < L_{N+1}$ that 
\begin{eqnarray}
\left(
\begin{array}{c}
\phi(n)\\
\phi(n+1)
\end{array}
\right)
=
R^{n-L_N}
\left(
\begin{array}{c}
\phi(L_N)\\
\phi(L_N+1)
\end{array}
\right).
\nonumber
\end{eqnarray}
Similary, for $L_{N-1} < n<L_N-1$, we have
\begin{eqnarray}
\nonumber
\left(
\begin{array}{c}
\phi(n)\\
\phi(n+1)
\end{array}
\right)
=
R^{n-L_N+1}
\left(
\begin{array}{c}
\phi(L_N-1)\\
\phi(L_N)
\end{array}
\right).
\end{eqnarray}
There exists $B=B(K)>0$ such that $\|R^n\|<B$ for $\epsilon<\frac{K}{|n|}$. 
It follows for $L_N<n<2L_N$ that 
\begin{eqnarray}
|\phi(n)|^2+|\phi(n+1)|^2 \geq B^{-1}(|\phi(L_N)|^2+|\phi(L_N+1)|^2).
\nonumber
\end{eqnarray}
Therefore, we have
\begin{eqnarray}
\cfrac{1}{\epsilon}\:{\rm Im}F_N(E+i\epsilon)
=
\|\phi\|^2 \geq B^{-1}L_N(|\phi(L_N)|^2+|\phi(L_N+1)|^2).
\label{++}
\end{eqnarray}
Similarly, it follows that for $\cfrac{L_N}{2}<n<L_N$, 
\[
|\phi(n)|^2+|\phi(n+1)|^2 \geq B^{-1}(|\phi(L_N)|^2+|\phi(L_N+1)|^2),
\]
and that 
\begin{eqnarray}
\cfrac{1}{\epsilon}{\rm Im}F_N(E+i\epsilon)
=
\|\phi\|^2 \geq B^{-1}L_N(|\phi(L_N-1)|^2+|\phi(L_N)|^2).
\label{+++}
\end{eqnarray}
$(\ref{++})$ and $(\ref{+++})$ imply that
\begin{eqnarray}
|\phi(L_N-1)|^2+|\phi(L_N+1)|^2
\leq 
\cfrac{2B}{\epsilon L_N}{\rm Im}F_N(E+i\epsilon).
\label{IJM}
\end{eqnarray}
We see that
\begin{eqnarray}
([L_N^{\frac{1-\Gamma}{\Gamma}}]+2-z)\phi(L_N)
=\phi(L_N-1)+\sqrt{[L_N^{\frac{1-\Gamma}{\Gamma}}]}\phi(L_N+1).
\nonumber
\end{eqnarray}
This shows that 
\begin{eqnarray}
|[L_N^{\frac{1-\Gamma}{\Gamma}}]+2-z|^2|\phi(L_N)|^2
\leq2[L_N^{\frac{1-\Gamma}{\Gamma}}](|\phi(L_N-1)|^2+|\phi(L_N+1)|^2).
\nonumber
\end{eqnarray}
Let $|z|<5$. Then there exists $B^{\prime}>0$ such that
\begin{eqnarray}
|\phi(L_N)|^2
\leq B^{\prime}L_N^{\frac{\Gamma-1}{\Gamma}}(|\phi(L_N-1)|^2+|\phi(L_N+1)|^2).
\label{IJM1}
\end{eqnarray}
$(\ref{IJM})$ and $(\ref{IJM1})$ imply that
\begin{eqnarray}
|\phi(L_N)|^2
\leq2BB^{\prime}
\cfrac{L_N^{-\frac{1}{\Gamma}}}{\epsilon}{\rm Im}F_N(E+i\epsilon).
\nonumber
\end{eqnarray}
Therefore, there exists $C_7=C_7(p)$ such that 
\begin{eqnarray}
&&
\epsilon
\sum_{m=L_N}^{T^3+L_N}
m^p
\int_{B_{\frac{\nu}{2}}}dE \; 
|\chi_{E+i\epsilon}(H^{(k)}_N)\delta_1(m)|^2
\nonumber
\\
&&\leq
C_7
\epsilon
\sum_{m=L_N}^{T^3+L_N}
m^p
{\rm exp}(-2c\epsilon(m-L_N))
\int_{B_{\frac{\nu}{2}}}dE \; 
|\chi_{E+i\epsilon}(H^{(k)}_N)\delta_1(L_N)|^2
\nonumber
\\
&&\leq
C_7
\epsilon^{-p}
\int_{B_{\frac{\nu}{2}}}dE \; 
|\phi(L_N)|^2
\nonumber
\\
&&\leq
2BB^{\prime}
C_7
\epsilon^{-p-1}
L_N^{-\frac{1}{\Gamma}}
\int_{B_{\frac{\nu}{2}}}dE \; {\rm Im}F_N(E+i\epsilon)
\nonumber
\\
&&\leq
2BB^{\prime}
C_7
T^{p+1}L_N^{-\frac{1}{\Gamma}},\qquad \epsilon=(2T)^{-1}.
\nonumber
\end{eqnarray}
\qed
\end{Proof}
\begin{_corollary}\label{PQ}
Let $p>0$, $f$ be the first kind, and $\psi = f(H^{(k)})\delta_1$. 
Then there exists  $C_8=C_8(p)>0$ such that for 
$L_N \leq T \leq L_N^{\frac{1}{\Gamma}}$ with $N$ sufficiently large, 
\begin{equation}
\langle|X|^p \rangle^{(k)}_{\psi}(T)
\leq
C_8L_N^p+C_8T^{p+1}L_N^{-\frac{1}{\Gamma}}.
\nonumber
\end{equation}
\end{_corollary}
\begin{Lemma}\label{HH}
Let $f$ be the first kind and $\psi = f(H^{(k)})\delta_1$. Then  
\begin{eqnarray}
\beta_{\psi}^{(k)}(p)= \cfrac{p+1}{p+\frac{1}{\Gamma}}.
\nonumber
\end{eqnarray}
\end{Lemma}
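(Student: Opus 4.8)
The plan is to obtain the equality by combining the two one-sided estimates that have already been assembled: the lower bound $\beta_\psi^{(k)}(p)\ge \frac{p+1}{p+\frac{1}{\Gamma}}$ established in \eqref{WWWWW}, and the upper bound on the moment furnished by Corollary \ref{PQ}. Since $\beta_\psi^{(k)}(p)$ is unaffected when $\psi$ is multiplied by a nonzero constant (both $\langle|X|^p\rangle_\psi^{(k)}(T)$ and $\mu_\psi^{(k)}$ merely rescale by a positive constant, which disappears in the logarithmic ratio), I may first normalize $f$. If $f$ is of the first kind then $f\in C_0^{\infty}(B_\nu)$, so $\|f\|_\infty<\infty$; replacing $f$ by $f/\|f\|_\infty$ produces a first-kind function with $\sup_x|f(x)|\le 1$ and the same value of $\beta_\psi^{(k)}(p)$. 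Hence the lower bound $\beta_\psi^{(k)}(p)\ge \frac{p+1}{p+\frac{1}{\Gamma}}$ applies verbatim, and it remains only to prove the matching upper bound.

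For the upper bound I would evaluate Corollary \ref{PQ} at the critical scale where its two terms balance. Set $A=\frac{p+\frac{1}{\Gamma}}{p+1}$ and test the sequence $T_N=L_N^{A}$. Because $\Gamma\in(0,1)$ we have $\frac{1}{\Gamma}>1$, and the elementary inequalities $\Gamma p<p$ and $p+\frac1\Gamma>p+1$ give $1<A<\frac1\Gamma$; consequently $L_N\le T_N\le L_N^{1/\Gamma}$, so $T_N$ lies in the admissible range of the corollary, and $T_N\to\infty$. The exponent of $L_N$ in the second term of Corollary \ref{PQ} is $A(p+1)-\tfrac1\Gamma=\bigl(p+\tfrac1\Gamma\bigr)-\tfrac1\Gamma=p$, so at $T=T_N$ the two contributions $C_8L_N^{p}$ and $C_8T_N^{p+1}L_N^{-1/\Gamma}$ coincide, yielding $\langle|X|^p\rangle_\psi^{(k)}(T_N)\le 2C_8L_N^{p}$.

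Taking logarithms along this subsequence and using $\log T_N=A\log L_N$, I obtain
\[
\frac{\log\langle|X|^p\rangle_\psi^{(k)}(T_N)}{\log T_N}
\le \frac{p\log L_N+\log(2C_8)}{A\log L_N}
\xrightarrow[N\to\infty]{}\frac{p}{A}.
\]
Since the $\liminf$ over all $T\to\infty$ is bounded above by the limit along any subsequence, this gives $\liminf_{T\to\infty}\frac{\log\langle|X|^p\rangle_\psi^{(k)}(T)}{\log T}\le \frac{p}{A}$, that is, $\beta_\psi^{(k)}(p)\le \frac{1}{A}=\frac{p+1}{p+\frac{1}{\Gamma}}$. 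Together with the lower bound from \eqref{WWWWW} this forces the equality $\beta_\psi^{(k)}(p)=\frac{p+1}{p+\frac{1}{\Gamma}}$.

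The substantive work for this statement has already been done in the two ingredients it invokes; the only genuine insight here is the identification of the balancing scale $T_N=L_N^{A}$ at which the near-diagonal contribution $L_N^{p}$ and the transport contribution $T^{p+1}L_N^{-1/\Gamma}$ of Corollary \ref{PQ} become comparable, together with the verification that this scale is admissible. Once that scale is pinned down, the remainder is a routine logarithmic-asymptotics computation, so I do not expect any serious obstacle in this final lemma.
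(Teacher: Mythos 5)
Your proposal is correct and follows essentially the same route as the paper: evaluate Corollary \ref{PQ} along the balancing subsequence $T_N=L_N^{A}$ with $A=\frac{p+\frac{1}{\Gamma}}{p+1}$ to get $\langle|X|^p\rangle_{\psi}^{(k)}(T_N)\leq C_8L_N^{p}$, conclude $\beta_{\psi}^{(k)}(p)\leq A^{-1}$ from the $\liminf$, and combine with the lower bound $(\ref{WWWWW})$. Your explicit normalization of $f$ to satisfy $\sup_x|f(x)|\leq 1$ before invoking $(\ref{WWWWW})$ is a small detail the paper leaves implicit, but it does not change the argument.
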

\begin{Proof}\rm
Let $L_N \leq T=L_N^A \leq L_N^{\frac{1}{\Gamma}}$, where $A=\cfrac{p+\frac{1}{\Gamma}}{p+1}$. Then Corollary $\ref{PQ}$ shows that 
\begin{eqnarray}
\langle|X|^p \rangle^{(k)}_{\psi}(L_N^A)
\leq
C_8L_N^p.
\nonumber
\end{eqnarray}
Therefore we have 
\begin{eqnarray}
\beta_{\psi}^{(k)}(p)\leq
\cfrac{1}{p}\lim_{N \rightarrow \infty}
\cfrac{{\rm log}\langle |X|^p\rangle_{\psi}^{(k)}(L_N^A)}{{\rm log}L_N^A}
\leq
A^{-1}
=
\cfrac{p+1}{p+\frac{1}{\Gamma}}.
\nonumber
\end{eqnarray}
Since $f$ is the first kind, $(\ref{WWWWW})$ holds. 
\qed
\end{Proof}

\subsection{Proof of the main result}
\begin{Lemma}\label{last lemma}
Let $A\in \mathcal{B}^1$. Then $E(A)=0$ if and only if
 $\mu_{\delta_1}^{(k)}(A)=0$ for any $k\in\mathbb{N}$.
\end{Lemma}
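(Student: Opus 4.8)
The plan is to combine the direct-sum decomposition of Lemma \ref{jacobi identification} with the fact that $\delta_1$ is a cyclic vector for each Jacobi matrix $H^{(k)}$. By Lemma \ref{jacobi identification} there is a unitary $W$ with $WHW^{-1}=\bigoplus_{k=1}^\infty H^{(k)}$, and hence the spectral projections satisfy $WE(A)W^{-1}=\bigoplus_{k=1}^\infty E^{(k)}(A)$ for every $A\in\mathcal{B}^1$. Consequently $E(A)=0$ holds if and only if $E^{(k)}(A)=0$ for every $k$. The forward implication is then immediate: if $E(A)=0$ then $E^{(k)}(A)=0$, so $\mu_{\delta_1}^{(k)}(A)=(\delta_1,E^{(k)}(A)\delta_1)=0$ for all $k$. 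The content of the lemma is the reverse implication, for which it suffices to show that, for each fixed $k$, the condition $\mu_{\delta_1}^{(k)}(A)=0$ forces $E^{(k)}(A)=0$.

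First I would verify that $\delta_1$ is cyclic for $H^{(k)}$. Every off-diagonal entry $-a_k(n)$ of $H^{(k)}$ equals $\sqrt{g_m}$ for some $m\ge 0$, and $g_m\ge 1$ for all $m$ by Definition \ref{sparse}, so each off-diagonal entry is strictly positive. Writing $H^{(k)}\delta_n=-a_k(n-1)\delta_{n-1}+d_k(n)\delta_n-a_k(n)\delta_{n+1}$, an induction on $n$ shows that $\delta_n$ lies in the linear span of $\delta_1,H^{(k)}\delta_1,\dots,(H^{(k)})^{n-1}\delta_1$; the step from $n$ to $n+1$ uses that the coefficient $-a_k(n)$ of $\delta_{n+1}$ is nonzero, so $\delta_{n+1}$ can be solved for. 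Hence $\mathcal{H}_{\delta_1}=\overline{\{p(H^{(k)})\delta_1\mid p\ \text{polynomial}\}}=l^2(\mathbb{N})$, i.e. $\delta_1$ is cyclic.

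With cyclicity in hand the reverse implication is standard. Assume $\mu_{\delta_1}^{(k)}(A)=0$. Since $E^{(k)}(A)$ is an orthogonal projection, $\|E^{(k)}(A)\delta_1\|^2=(\delta_1,E^{(k)}(A)\delta_1)=\mu_{\delta_1}^{(k)}(A)=0$, so $E^{(k)}(A)\delta_1=0$. Because $E^{(k)}(A)$ commutes with every bounded Borel function of $H^{(k)}$, in particular with each $p(H^{(k)})$, we obtain $E^{(k)}(A)p(H^{(k)})\delta_1=p(H^{(k)})E^{(k)}(A)\delta_1=0$. As $E^{(k)}(A)$ is bounded and vanishes on the dense subspace $\{p(H^{(k)})\delta_1\}=\mathcal{H}_{\delta_1}$, it vanishes identically, $E^{(k)}(A)=0$. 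Running this for every $k$ and invoking the decomposition of the first paragraph yields $E(A)=0$.

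I expect the only genuine point needing care to be the cyclicity of $\delta_1$, which rests entirely on the strict positivity of all the weights $g_m$ (equivalently, on no off-diagonal entry of the Jacobi matrix vanishing); everything else is the routine passage between a cyclic vector, its scalar spectral measure $\mu_{\delta_1}^{(k)}$, and the associated spectral projection $E^{(k)}(A)$.
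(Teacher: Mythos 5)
Your proof is correct and follows essentially the same route as the paper: reduce to $E^{(k)}(A)=0$ via the direct-sum decomposition, then use cyclicity of $\delta_1$ and density of $\{p(H^{(k)})\delta_1\}$ to conclude. The only difference is cosmetic — you derive $E^{(k)}(A)p(H^{(k)})\delta_1=0$ by commuting the projection through $p(H^{(k)})$ rather than by computing $\int_A|p(\lambda)|^2\,\mu_{\delta_1}^{(k)}(d\lambda)=0$, and you supply the verification of cyclicity (nonvanishing of the off-diagonal entries $a_k(n)$) that the paper simply asserts.
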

\begin{Proof}\rm
Assume that $E(A)=0$. Then we see that $E^{(k)}(A)=0$ and 
\[
\mu_{\delta_1}^{(k)}(A)=(\delta_1,E^{(k)}(A)\delta_1)=0.
\]

Conversely, assume that $\mu_{\delta_1}^{(k)}(A)=0$ for any $k \in \mathbb{N}$. It is sufficient to prove that $E^{(k)}(A)=0$ for any $k \in \mathbb{N}$. Let $p$ ba a polynomial, then we see that 
\[
\mu_{p(H^{(k)})\delta_1}^{(k)}(A)=(p(H^{(k)})\delta_1,E^{(k)}(A)p(H^{(k)})\delta_1)=\int_{A}|p(\lambda)|^2 \mu_{\delta_1}^{(k)}(d\lambda)=0.
\]
This implies that $E^{(k)}(A)p(H^{(k)})\delta_1=0$. 
Since $\delta_1$ is a cyclic vector for $H^{(k)}:l^2(\mathbb{N}) \rightarrow l^2(\mathbb{N})$, 
$\{p(H^{(k)})\delta_1 \in l^2(\mathbb{N})\mid \text{ $p$ is a polynomial}\}$ is dense in $l^2(\mathbb{N})$. Therefore $E^{(k)}(A)=0$ follows. 
\qed
\end{Proof}
\begin{Lemma}\label{last last lamma}
Let $A\in \mathcal{B}^1$ and $A \subset (0,4)$. Then $\tilde{E}(A)=0$ if and only if
 $\mu_{\psi}^{(k)}(A)=0$ for any $k\in\mathbb{N}$ and any $\psi=f(H^{(k)})\delta_1$, with the first kind $f$.
Moreover, ${\rm dim}_{*}\tilde{E}={\rm dim}_{*}\mu_{\psi}^{(k)}$ and ${\rm dim}^{*}\tilde{E}={\rm dim}^{*}\mu_{\psi}^{(k)}$ follow for any $k\in \mathbb{N}$ and any $\psi=f(H^{(k)})\delta_1$, with the first kind $f$.
\end{Lemma}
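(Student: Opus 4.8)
The plan is to handle the two assertions separately: prove the null-set equivalence directly from the functional calculus, and then extract the dimension identities by passing the projection-valued measure $\tilde{E}$ to a genuine finite scalar measure and importing the quantitative bounds already established.

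For the equivalence I would first record $\mu_{f(H^{(k)})\delta_1}^{(k)}(A)=\int_A |f(\lambda)|^2\,\mu_{\delta_1}^{(k)}(d\lambda)$, which is immediate from the functional calculus and the definition of $\mu_\psi^{(k)}$. If $\tilde{E}(A)=0$ then, since $A\subset(0,4)$, we have $\tilde{E}(A)=E(A)=\bigoplus_k E^{(k)}(A)=0$ by Lemma \ref{jacobi identification}, so $E^{(k)}(A)=0$ and hence $\mu_\psi^{(k)}(A)=(\psi,E^{(k)}(A)\psi)=0$ for every $k$ and every $\psi=f(H^{(k)})\delta_1$. Conversely, choosing first-kind functions $f_j\in C_0^\infty((1/j,4-1/j))$ with $f_j>0$ on $(2/j,4-2/j)$, the vanishing of $\int_A|f_j|^2\,d\mu_{\delta_1}^{(k)}$ forces $\mu_{\delta_1}^{(k)}(A\cap\{f_j\neq0\})=0$; since $\bigcup_j\{f_j\neq0\}\supset(0,4)\supset A$, letting $j\to\infty$ gives $\mu_{\delta_1}^{(k)}(A)=0$ for all $k$, whence $\tilde{E}(A)=E(A)=0$ by Lemma \ref{last lemma}.

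For the dimensions I would introduce the finite scalar measure $\nu(A)=\sum_{k=1}^\infty 2^{-k}\mu_{\delta_1}^{(k)}(A\cap(0,4))$. By Lemma \ref{last lemma} and the orthogonal decomposition of $E$ one checks $\tilde{E}(\mathbb{R}\setminus A)=0\Leftrightarrow\nu(\mathbb{R}\setminus A)=0$ and $\tilde{E}(A)\neq0\Leftrightarrow\nu(A)\neq0$, so ${\rm dim}^*\tilde{E}={\rm dim}^*\nu$ and ${\rm dim}_*\tilde{E}={\rm dim}_*\nu$. The identity above together with $\{f\neq0\}\subset(0,4)$ then yields two elementary inclusions of the competing families of sets: every full-$\tilde{E}$ set is full-$\mu_\psi^{(k)}$, and every $A$ with $\mu_\psi^{(k)}(A)\neq0$ has $\tilde{E}(A)\neq0$. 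These give the easy inequalities ${\rm dim}^*\mu_\psi^{(k)}\le{\rm dim}^*\tilde{E}$ and ${\rm dim}_*\tilde{E}\le{\rm dim}_*\mu_\psi^{(k)}$. Now I combine the two external inputs: Lemma \ref{former-result} gives $\Gamma\le{\rm dim}_*\tilde{E}$, while Lemma \ref{H} and Lemma \ref{HH} give ${\rm dim}^*\mu_\psi^{(k)}\le\beta_\psi^{(k)}(p)=\frac{p+1}{p+1/\Gamma}$ for all $p>0$, hence ${\rm dim}^*\mu_\psi^{(k)}\le\inf_{p>0}\frac{p+1}{p+1/\Gamma}=\Gamma$. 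Threading these through the easy inequalities, the chain $\Gamma\le{\rm dim}_*\tilde{E}\le{\rm dim}_*\mu_\psi^{(k)}\le{\rm dim}^*\mu_\psi^{(k)}\le\Gamma$ collapses, so ${\rm dim}_*\tilde{E}={\rm dim}_*\mu_\psi^{(k)}={\rm dim}^*\mu_\psi^{(k)}=\Gamma$. This settles the ${\rm dim}_*$ identity and pins ${\rm dim}^*\mu_\psi^{(k)}=\Gamma\le{\rm dim}^*\tilde{E}$.

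The genuinely hard point, which the easy inclusion cannot supply, is the reverse bound ${\rm dim}^*\tilde{E}\le\Gamma$, since a single $\mu_\psi^{(k)}$ only sees $\{f\neq0\}$ inside the $k$-th summand. Here I would use the local-dimension description ${\rm dim}^*\mu=\esssup$ of $\gamma_\mu$ with respect to $\mu$, from \cite[Chapter 10, Proposition 10.1]{Haus}, where $\gamma_\mu(x)=\liminf_{\delta\to0}\frac{{\rm log}\,\mu([x-\delta,x+\delta])}{{\rm log}\,\delta}$. On $\{f\neq0\}$ the density $|f|^2$ is continuous and locally bounded away from $0$ and $\infty$, so $\gamma_{\mu_\psi^{(k)}}=\gamma_{\mu_{\delta_1}^{(k)}}$ there; as ${\rm dim}^*\mu_\psi^{(k)}=\Gamma$ and $\{f\neq0\}$ may be taken to exhaust $(0,4)$, this forces $\gamma_{\mu_{\delta_1}^{(k)}}\le\Gamma$ almost everywhere with respect to $\mu_{\delta_1}^{(k)}$ on $(0,4)$, for every $k$. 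Finally $\nu\ge 2^{-k}\mu_{\delta_1}^{(k)}$ on $(0,4)$ gives $\gamma_\nu\le\gamma_{\mu_{\delta_1}^{(k)}}$ pointwise, so $\{\gamma_\nu>\Gamma\}\subset\{\gamma_{\mu_{\delta_1}^{(k)}}>\Gamma\}$ is $\mu_{\delta_1}^{(k)}$-null for every $k$, hence $\nu$-null; therefore ${\rm dim}^*\tilde{E}={\rm dim}^*\nu\le\Gamma$, completing ${\rm dim}^*\tilde{E}={\rm dim}^*\mu_\psi^{(k)}$. The main obstacle is precisely this passage from the uniform-in-$k$ intermittency bound to an almost-everywhere bound on the local dimension of each $\mu_{\delta_1}^{(k)}$ and then to $\nu$; the rest is bookkeeping with null sets.
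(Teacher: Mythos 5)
Your proof of the null-set equivalence is essentially the paper's: the forward direction via the decomposition of Lemma \ref{jacobi identification}, and the converse via an exhausting sequence of first-kind functions reducing to $\mu_{\delta_1}^{(k)}(A)=0$ and then Lemma \ref{last lemma} (the paper passes to the limit by dominated convergence where you argue with a countable union of the sets $\{f_j\neq 0\}$; the difference is immaterial).

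For the ``Moreover'' part the two arguments genuinely diverge. The paper disposes of it in one sentence (``straightforward from the first part and the definitions''), which taken literally only yields the one-sided inequalities ${\rm dim}_*\tilde{E}\leq{\rm dim}_*\mu_{\psi}^{(k)}$ and ${\rm dim}^*\mu_{\psi}^{(k)}\leq{\rm dim}^*\tilde{E}$ for a fixed pair $(k,\psi)$: the reverse inequalities cannot come from the equivalence alone, since the equivalence quantifies over all $(k,\psi)$ while the identity is asserted for each fixed one. You correctly recognize this and close the gap by importing the quantitative inputs --- Lemma \ref{former-result} for $\Gamma\leq{\rm dim}_*\tilde{E}$ and Lemmas \ref{H} and \ref{HH} for ${\rm dim}^*\mu_{\psi}^{(k)}\leq\Gamma$ --- which collapses your chain and pins every quantity except ${\rm dim}^*\tilde{E}$ at $\Gamma$; the remaining bound ${\rm dim}^*\tilde{E}\leq\Gamma$ you obtain by a local-dimension argument through the auxiliary finite measure $\nu=\sum_{k}2^{-k}\mu_{\delta_1}^{(k)}(\cdot\cap(0,4))$. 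This is correct and not circular (none of the imported lemmas depends on the present one), but it effectively absorbs the proof of Theorem \ref{main} into this lemma, whereas the paper keeps the lemma as a nominally soft reduction and does the quantitative work afterwards; in exchange, your version actually proves the identities as stated rather than deferring them. One simplification: once ${\rm dim}^*\mu_{\psi}^{(k)}\leq\Gamma$ is known for all $(k,\psi)$, choose for each $k$ and each $f_n$ in your exhausting family a carrier $A_{k,n}$ with ${\rm dim}\,A_{k,n}\leq\Gamma+\epsilon$ and $\mu_{f_n(H^{(k)})\delta_1}^{(k)}(\mathbb{R}\setminus A_{k,n})=0$; the countable union $A=\bigcup_{k,n}A_{k,n}$ still satisfies ${\rm dim}\,A\leq\Gamma+\epsilon$ by countable stability of Hausdorff dimension and carries all of $\tilde{E}$ by the first part of the lemma, so ${\rm dim}^*\tilde{E}\leq\Gamma$ follows without the local-dimension machinery and the measure $\nu$.
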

\begin{Proof}\rm
Assume that $\tilde{E}(A)=0$. Then we see that $E^{(k)}(A)=0$ and 
\[
\mu_{\psi}^{(k)}(A)=(\psi,E^{(k)}(A)\psi)=0.
\]

Assume that $\mu_{\psi}^{(k)}(A)=0$ for any $k\in\mathbb{N}$ and $\psi=f(H^{(k)})\delta_1$, where $f$ is the first kind. 
Let $f_n \in C^{\infty}_0(\frac{1}{n}, 4-\frac{1}{n})$, $|f_n|\leq 1$, and $f_n=1$ on the interval $(\frac{2}{n}, 4-\frac{2}{n})$, $n=1,2,...$. Let $\psi_n=f_n(H^{(k)})\delta_1$. 
Since $f_n$ is the first kind, $\mu_{\psi_n}^{(k)}(A)=0$ for any $k \in \mathbb{N}$.
 It is sufficient to prove that $E^{(k)}(A)=0$ for any $k \in \mathbb{N}$. 
 We see that 
\[
\mu_{\psi_n}^{(k)}(A)=(f_n(H^{(k)})\delta_1,E^{(k)}(A)f_n(H^{(k)})\delta_1)=\int_{A}|f_n(\lambda)|^2 \mu_{\delta_1}^{(k)}(d\lambda)=0.
\]
By the Lebeasgue's dominated convergence theorem, 
\[
0=\lim_{n\rightarrow \infty}\mu_{\psi_n}^{(k)}(A)=
\lim_{n\rightarrow \infty}\int_{A}|f_n(\lambda)|^2 \mu_{\delta_1}^{(k)}(d\lambda)
=\mu_{\delta_1}^{(k)}(A).
\]
By Lemma $\ref{last lemma}$, we see that $E^{(k)}(A)=0$. Then we prove the first part of our assertion. The second part is straightforward to prove by the first part  and the definition of the lower and upper Hausdorff dimensions.
\qed
\end{Proof}
\begin{proof_main}\rm
By Lemma $\ref{last last lamma}$, it is sufficient to prove that ${\rm dim}_{*}\mu_{\psi}^{(k)}={\rm dim}^{*}\mu_{\psi}^{(k)}=\Gamma$ for any $k\in \mathbb{N}$ and any $\psi=f(H^{(k)})\delta_1$, with the first kind $f$. By Lemma $\ref{former-result}$, we see that
\[
\Gamma \leq {\rm dim}_{*}\mu_{\psi}^{(k)}\leq{\rm dim}^{*}\mu_{\psi}^{(k)}.
\]
By Lemma $\ref{H}$ and Lemma $\ref{HH}$, for any $p>0$,
\[
{\rm dim}^*(\mu_{\psi}^{(k)}) \leq \beta_{\psi}^{(k)}(p)= \cfrac{p+1}{p+\frac{1}{\Gamma}}.
\]
This imlies that ${\rm dim}_{*}\mu_{\psi}^{(k)}={\rm dim}^{*}\mu_{\psi}^{(k)}=\Gamma$.
\qed
\end{proof_main}

\begin{appendices}

\section{}
\label{Decomposition of the graph Laplacian}
We discuss the decomposition of the graph Laplacian and represent the  graph Laplacian as a Jacobi matrix. See $[1]$.

We assume that $G=(V,E)$ is a spherically homogeneous tree. 
Let $\pi_n:l^2(S_n)\rightarrow l^2(S_{n+1})$, $n=0,1,... ,$ be defined by
\[
\pi_nf(u)= \sum_{v\in S_{n}:v \sim u} f(v), \:\: {\text \rm  u \in S_{n+1}}.
\]
The adjoint $\pi_n^*:l^2(S_{n+1})\rightarrow l^2(S_n)$ is given by
\begin{displaymath}
\pi_n^*g(u)=\sum_{v\in S_{n+1}: v\sim u}g(v),  \:\: {\text \rm u \in S_n}.
\end{displaymath}
\begin{Lemma}\label{3.1.1}
Let $f$, $g \in l^2(S_n)$. Then $(\pi_nf,\pi_ng)=g_n(f,g)$.
\end{Lemma}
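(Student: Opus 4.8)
The plan is to exploit the tree structure together with spherical homogeneity to reduce the claim to a counting argument; equivalently, the asserted identity says precisely that $\pi_n^*\pi_n = g_n I$ on $l^2(S_n)$.

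First I would observe that because $G$ is a tree rooted at $o$, every vertex $u\in S_{n+1}$ is joined to $o$ by a unique path, so it has exactly one neighbour lying in $S_n$, namely its parent, which I denote $p(u)$. Hence the defining sum for $\pi_n$ collapses to a single term, $\pi_n f(u)=f(p(u))$ for all $u\in S_{n+1}$. (Local finiteness forces each $S_n$ to be finite, so every sum below is a finite sum and no convergence issue arises.)

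Next I would write out the inner product and regroup the sum over $u\in S_{n+1}$ according to the parent $v=p(u)\in S_n$:
\[
(\pi_n f,\pi_n g)=\sum_{u\in S_{n+1}}\overline{f(p(u))}\,g(p(u))
=\sum_{v\in S_n}\#\{u\in S_{n+1}\mid p(u)=v\}\,\overline{f(v)}\,g(v).
\]
The decisive step is to count the children of a fixed $v\in S_n$. By spherical homogeneity every vertex of $S_n$ has the same degree $d_n$. For $n\geq 1$, one incident edge leads to the parent in $S_{n-1}$ and the remaining $d_n-1=g_n$ edges lead to children in $S_{n+1}$; for $n=0$ the root has no parent, so all $d_0=g_0$ edges lead to children. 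In either case $\#\{u\in S_{n+1}\mid p(u)=v\}=g_n$, independently of $v$. Substituting yields $(\pi_n f,\pi_n g)=g_n\sum_{v\in S_n}\overline{f(v)}\,g(v)=g_n(f,g)$, as required.

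The only point demanding care is this counting step: correctly recognising that each vertex has a \emph{unique} parent (a consequence of the tree axiom), and then matching the child count to the definition of $g_n$, with the root $n=0$ treated separately since it lacks a parent. Everything else is purely formal rearrangement.
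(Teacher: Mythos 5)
Your proof is correct and follows essentially the same route as the paper: the paper's one-line argument ("since $G$ is spherically homogeneous") is exactly the parent/child counting you spell out, namely that each $u\in S_{n+1}$ has a unique parent so $\pi_nf(u)=f(p(u))$, and each $v\in S_n$ has exactly $g_n$ children (with the root case $n=0$ handled by $g_0=d_0$). You have simply made explicit the details the paper leaves implicit.
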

\begin{Proof}\rm
Let $f$, $g \in l^2(S_n)$. Since $G$ is a spherically homogeneous, 
\[
\langle \pi_n f, \pi_n g \rangle=\sum_{u\in S_{n+1}}\overline{\pi_n f(u)} \pi_n g(u)
=g_n\sum_{u\in S_{n}}\overline{ f(u)}  g(u).
\]
\qed
\end{Proof}

We see that $V$ is a disjoint union $V=\cup_{n=0}^{\infty}S_n$, 
and that $l^2(V)=\bigoplus_{n=0}^{\infty}l^2(S_n)$. 
Let $\Pi:l^2(V)\rightarrow l^2(V)$ be defined by 
$\Pi = \bigoplus_{n=0}^{\infty}\pi_n$.

\begin{Lemma}\label{RT}
Let $f \in \mathcal{D}$. Then $Af=(\Pi +\Pi^*)f$.
\end{Lemma}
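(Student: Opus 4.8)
The plan is to unwind the grading $l^2(V)=\bigoplus_{n=0}^{\infty}l^2(S_n)$ and compute $\Pi f$ and $\Pi^* f$ pointwise, then match the outcome against $Af$ sphere by sphere. First I would record the concrete action of the two summands. Since $\Pi=\bigoplus_n \pi_n$ carries the $l^2(S_n)$-component into the $l^2(S_{n+1})$-component, for $u\in S_{n+1}$ one has $(\Pi f)(u)=\pi_n(f|_{S_n})(u)=\sum_{v\in S_n,\,v\sim u}f(v)$, while $(\Pi f)(o)=0$ because there is no sphere $S_{-1}$. Dually, $\Pi^*=\bigoplus_n \pi_n^*$ carries the $l^2(S_{n+1})$-component into $l^2(S_n)$, so for $u\in S_n$ one has $(\Pi^* f)(u)=\pi_n^*(f|_{S_{n+1}})(u)=\sum_{v\in S_{n+1},\,v\sim u}f(v)$. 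Because $f\in\mathcal{D}$ has finite support, all of these sums are finite and everything is well defined.

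The key structural input is that in a rooted tree every edge joins consecutive spheres: if $u\sim v$ then $|d(o,u)-d(o,v)|=1$. I would prove this from the uniqueness of paths: adjacency already gives $|d(o,u)-d(o,v)|\le 1$, and the equality $d(o,u)=d(o,v)=n$ together with the edge $\{u,v\}$ would produce two distinct paths from $o$ to $v$ (the geodesic to $v$, and the geodesic to $u$ extended by the edge $\{u,v\}$), contradicting the tree hypothesis. Consequently, for $u\in S_n$ with $n\ge 1$ the neighbours of $u$ split into those lying in $S_{n-1}$ and those lying in $S_{n+1}$, with none in $S_n$ itself.

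With these two facts in hand the verification is immediate. Fix $u\in S_n$ with $n\ge 1$ and split the defining sum for $A$ according to the sphere containing each neighbour:
\[
Af(u)=\sum_{v\sim u}f(v)=\sum_{v\in S_{n-1},\,v\sim u}f(v)+\sum_{v\in S_{n+1},\,v\sim u}f(v)=(\Pi f)(u)+(\Pi^* f)(u).
\]
For the root $u=o\in S_0$ there are no neighbours in $S_{-1}$, so $Af(o)=\sum_{v\in S_1,\,v\sim o}f(v)=(\Pi^* f)(o)=(\Pi f)(o)+(\Pi^* f)(o)$, using $(\Pi f)(o)=0$. Since $u\in V$ was arbitrary, this yields $Af=(\Pi+\Pi^*)f$ for every $f\in\mathcal{D}$.

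The argument is essentially bookkeeping, and the only genuine point is the structural lemma that edges connect only adjacent spheres; that is where I expect to spend the most care, though it is a short consequence of path-uniqueness in a tree. I do not expect to need Lemma \ref{3.1.1} here, since on $\mathcal{D}$ all sums are finite and no $l^2$ estimates enter.
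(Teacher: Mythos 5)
Your proposal is correct and follows essentially the same route as the paper: both split the sum $\sum_{v\sim u}f(v)$ according to whether the neighbour lies in $S_{n-1}$ or $S_{n+1}$ and identify the two pieces with $\Pi^*f(u)$ and $\Pi f(u)$ respectively. You supply somewhat more detail than the paper (a proof from path-uniqueness that edges only join consecutive spheres, and an explicit treatment of the root), but the argument is the same.
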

\begin{Proof}\rm
Let $f \in \mathcal{D}$ and $u \in S_n$. 
Since $G$ is a spherically homogeneous tree, 
$u$ is adjacent with only vertices in $S_{n-1}$ and $S_{n+1}$. Therefore we see that
\[
(\Pi +\Pi^*)f(u)= \sum_{v\in S_{n-1};v\sim u}f(v)+ \sum_{v\in S_{n+1};v\sim u}f(v)
=\sum_{v \in V;v\sim u}f(v)
=Af(u).
\]
\qed
\end{Proof}
Let $\alpha_n= \#S_n={\rm dim}(l^2(S_n))$, $n=0,1,...$. 
Suppose that $\{ e^{(n)}_k\}_{k=1}^{\alpha_n}$ is a CONS  of $l^2(S_n)$. 
Then we can construct a CONS $\{ e^{(n+1)}_k\}_{k=1}^{\alpha_{n+1}}$ of $l^2(S_{n+1})$ by the following procedure.
Let 
$
e^{(n+1)}_k = \| \pi_n e^{(n)}_k\|^{-1}\pi_n e^{(n)}_k
$, $k=1,2,...,\alpha_n$.
By Lemma $\ref{3.1.1}$, 
$\{e^{(n+1)}_k\}_{k=1}^{\alpha_{n}}$ is an ONS of $l^2(S_{n+1})$. 
If $\alpha_n=\alpha_{n+1}$, then $\{e^{(n+1)}_k\}_{k=1}^{\alpha_{n}}$ is a CONS of $l^2(S_{n+1})$.  
If $\alpha_n<\alpha_{n+1}$,
by the Gram-Schmidt process, we can obtain $e^{(n+1)}_k\in l^2(S_{n+1})$, $k=\alpha_n+1,...,\alpha_{n+1}$, such that 
$\{e^{(n+1)}_k\}_{k=1}^{\alpha_{n}}\cup\{e^{(n+1)}_k\}_{k=\alpha_n+1}^{\alpha_{n+1}}$
 is a CONS of $l^2(S_{n+1})$.

Suppose that a CONS of $l^2(S_0)$ is given. 
Then we can costruct a CONS $\{e^{(n)}_k\}_{k=1}^{\alpha_n}$ of $l^2(S_n)$, 
$n=0,1,... ,$ inductively.
Hence, 
$
\bigcup_{n=0}^{\infty}\{e^{(n)}_k\}_{k=1}^{\alpha_n}
$ is a CONS of $l^2(V)$.

Assume that
$
\sup_{n=0,1,...} \alpha_n=\infty
$, and let  $\alpha_{-1}=0$.
Since   
$\{\alpha_n\}_{n=0}^{\infty}$ is non-decreasing, 
there exists a unique $N(k)\in \mathbb{N}\cup \{0\}$ such that 
$\alpha_{N(k)-1}< k \leq \alpha_{N(k)}$ for every $k \in \mathbb{N}$.
We see that 
\begin{displaymath}
\bigcup_{n=0}^{\infty}\{e^{(n)}_k \mid k=1,2,...,\alpha_n \}=
\bigcup_{k=1}^{\infty}\{e^{(n)}_k \mid n=N(k),N(k)+1,...  \}.
\end{displaymath}

\begin{Lemma}\label{invariance of M_k}
Let the closed subspace $M_k$ of $l^2(V)$, $k=1,2,...$, be defined by
\begin{displaymath}
M_k = \overline{\langle
\{e^{(n)}_k \mid n=N(k),N(k)+1,... 
\}\rangle}.
\end{displaymath}
Then $M_k$ is invariant under $A$, $D$ and $H$.
\end{Lemma}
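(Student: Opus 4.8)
The plan is to compute how each of the three operators acts on the orthonormal vectors $e^{(n)}_k$ with $n\ge N(k)$ that span $M_k$, and to check that each operator keeps us inside the linear span $M_k^0=\langle\{e^{(m)}_k\mid m\ge N(k)\}\rangle$; invariance of the closed subspace $M_k$ then follows on passing to closures. Everything rests on two identities coming from the construction of the bases together with Lemma \ref{3.1.1}. First, taking $f=g=e^{(n)}_k$ in Lemma \ref{3.1.1} gives $\|\pi_n e^{(n)}_k\|=\sqrt{g_n}$, so the construction $e^{(n+1)}_k=\|\pi_n e^{(n)}_k\|^{-1}\pi_n e^{(n)}_k$ yields the pushforward relation $\pi_n e^{(n)}_k=\sqrt{g_n}\,e^{(n+1)}_k$ whenever $k\le\alpha_n$. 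Second, Lemma \ref{3.1.1} is precisely the operator identity $\pi_n^*\pi_n=g_n I$ on $l^2(S_n)$, since $(\pi_n^*\pi_n f,g)=(\pi_n f,\pi_n g)=g_n(f,g)$.

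First I record the action of $D$, which is the easiest: because $G$ is spherically homogeneous, $\mathrm{deg}(u)=d_n$ is constant on $S_n$, so $D$ acts as the scalar $d_n$ on $l^2(S_n)$ and hence $D e^{(n)}_k=d_n e^{(n)}_k\in M_k$.

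Next I treat $A=\Pi+\Pi^*$ (Lemma \ref{RT}). For the raising part, $\Pi e^{(n)}_k=\pi_n e^{(n)}_k=\sqrt{g_n}\,e^{(n+1)}_k$ for every $n\ge N(k)$ (note $k\le\alpha_{N(k)}\le\alpha_n$), and this lies in $M_k$ since $n+1>N(k)$. For the lowering part $\Pi^* e^{(n)}_k=\pi_{n-1}^* e^{(n)}_k$ I split into two cases according to whether $e^{(n)}_k$ is a pushforward vector or a freshly added Gram-Schmidt vector. If $n\ge N(k)+1$, then $k\le\alpha_{n-1}$, so $e^{(n)}_k=g_{n-1}^{-1/2}\pi_{n-1}e^{(n-1)}_k$, and $\pi_{n-1}^*\pi_{n-1}=g_{n-1}I$ gives $\pi_{n-1}^* e^{(n)}_k=\sqrt{g_{n-1}}\,e^{(n-1)}_k\in M_k$. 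If $n=N(k)$, then $k>\alpha_{N(k)-1}=\alpha_{n-1}$, so $e^{(n)}_k$ is orthogonal to $\mathrm{range}(\pi_{n-1})=\langle\{e^{(n)}_j\mid j\le\alpha_{n-1}\}\rangle$; consequently $(\pi_{n-1}^* e^{(n)}_k,h)=(e^{(n)}_k,\pi_{n-1}h)=0$ for every $h\in l^2(S_{n-1})$, whence $\Pi^* e^{(n)}_k=0\in M_k$ (this also covers $n=N(k)=0$, where $\Pi^*$ annihilates the $S_0$ component). Thus $A$ preserves $M_k^0$, and since $H=D-A$, so does $H$.

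Finally, the closure. Each $e^{(n)}_k$ is supported on the finite set $S_n$, so $M_k^0\subset\mathcal{D}$, and the computations above show $A$, $D$, $H$ map $M_k^0$ into $M_k^0\subset M_k$; taking closures shows $M_k=\overline{M_k^0}$ is invariant. Since the $\{M_k\}$ are mutually orthogonal with $\bigoplus_k M_k=l^2(V)$ and the operators are symmetric on $\mathcal{D}$, the orthogonal projection onto $M_k$ commutes with $\overline{H}$, so $M_k$ in fact reduces $A$, $D$, $H$. The one delicate point, and the main obstacle, is the boundary case $n=N(k)$: one must verify that the newly introduced Gram--Schmidt vector is annihilated by $\Pi^*$, which is exactly where the orthogonality of $e^{(N(k))}_k$ to $\mathrm{range}(\pi_{N(k)-1})$ enters.
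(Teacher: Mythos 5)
Your proof is correct and follows essentially the same route as the paper's: compute $\Pi e^{(n)}_k$ and $\Pi^* e^{(n)}_k$ from the construction of the bases and Lemma \ref{3.1.1} (including the key boundary fact that $\Pi^*$ annihilates $e^{(N(k))}_k$), observe that $D$ acts diagonally by spherical homogeneity, and conclude for $A=\Pi+\Pi^*$ and $H$. You supply slightly more detail than the paper at the boundary case (orthogonality of the Gram--Schmidt vector to $\mathrm{range}(\pi_{N(k)-1})$) and make the constants $\|\pi_n e^{(n)}_k\|=\sqrt{g_n}$ explicit, but the argument is the same.
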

\begin{Proof}\rm
By the definition of $e^{(n)}_k$ and 
Lemma \ref{3.1.1}, we see that 
\begin{eqnarray}
\Pi e^{(n)}_k&=&\| \pi_n e^{(n)}_k\| e^{(n+1)}_k ,
\nonumber
\\
\Pi^* e^{(n)}_k&=&
\begin{cases}
\cfrac{g_{n-1}}{\| \pi_{n-1} e^{(n-1)}_k\|}\; e^{(n-1)}_k  & (n \geq N(k)+1),
\\
\bm{o}&(n=N(k)).
\nonumber
\end{cases}
\end{eqnarray} 
This implies that $M_k$ is invariant under $\Pi$ and $\Pi^*$, and hence, 
by Lemma \ref{RT}, we see that 
$M_k$ is invariant under $A$.
Since $G$ is a spherically homogeneous tree, we have
\begin{equation}
D e^{n}_k =
\begin{cases}
(g_n+1) e^{(n)}_k & (n\geq 1),\\
g_0 e^{(0)}_1 & (n=0).
\end{cases}
\nonumber
\label{3.3}
\end{equation}
Hence, $M_k$ is also inavariant under $D$. 
Since $H=\overline{D-A}$, we see that $M_k$ is invariant under $H$.
\qed
\end{Proof}
By Lemma \ref{invariance of M_k},
let $H^{(k)},A^{(k)},D^{(k)}:M_k\rightarrow M_k$, $k=1,2,...$, be defined by the restriction of $H,A$ and $D$  to $M_k$, respectively.
We see that $H^{(k)}$ is self-adjoint and 
$H=\bigoplus_{k=1}^{\infty}H^{(k)}$. 

We consider the matrix representation of $H^{(k)}$ 
with respect to the CONS $\{e^{(n)}_k \mid n=N(k),N(k)+1,...  \}$ of $M_k$ for $k=2,3,...$. Then it follows for $n,m \geq N(k)$ that
\begin{eqnarray}
(e_k^{(n)},H^{(k)}e_k^{(n)})&=&(e_k^{(n)},D^{(k)}e_k^{(n)})=g_n+1,
\nonumber
\\
(e_k^{(n)},H^{(k)}e_k^{(n+1)})&=&
-(e_k^{(n)},A^{(k)}e_k^{(n+1)})=-\sqrt{g_n},
\nonumber
\\
(e_k^{(n)},H^{(k)}e_k^{(m)})&=&0, \text{ if $|n-m|\geq2$ } .
\nonumber
\end{eqnarray}
We have the matrix representation of $H^{(1)}$. 
It follows for $n,m \geq N(1)=0$ that
\begin{eqnarray}
(e_1^{(n)},H^{(1)}e_1^{(n)})&=&
(e_1^{(n)},D^{(1)}e_1^{(n)})=
\begin{cases}
g_0&(n=0),\\
g_{n}+1&(n \geq 1),
\end{cases}
\nonumber
\\
(e_1^{(n)},H^{(1)}e_1^{(n+1)})&=&
-(e_1^{(n)},A^{(1)}e_1^{(n+1)})=-\sqrt{g_n},
\nonumber
\\
(e_1^{(n)},H^{(1)}e_1^{(m)})&=&0,  \text{ if $|n-m|\geq 2$}.
\nonumber
\end{eqnarray}
Let $k,n\in \mathbb{N}$ and let 
$d_{k}=(d_k(n))_{n=1}^{\infty}$ and $a_{k}=(a_{k}(n))_{n=1}^{\infty}$ be defined by 
\begin{eqnarray}
d_k(n)&=&
(e_{k}^{(n+N(k)-1)},D^{(k)}e_{k}^{(n+N(k)-1)}),\nonumber\\
a_{k}(n)&=&
(e_{k}^{(n+N(k))},A^{(k)}e_{k}^{(n+N(k)-1)}).\nonumber
\end{eqnarray}
We can identify $H^{(k)}:l^2(\mathbb{N})\rightarrow l^2(\mathbb{N})$, $k=1,2,...$, with the following Jacobi matrix :
\begin{displaymath}
H^{(k)}=
\renewcommand{\arraystretch}{1.6}
\left(
\begin{array}{ccccccc}
d_{k}({1})&-a_{k}({1})\\
-a_{k}({1})&d_{k}({2})&-a_{k}({2})\\
&-a_{k}({2})&d_{k}({3})&-a_{k}({3})\\
&&-a_{k}({3})&\ddots&\ddots\\
&&&\ddots\\
\end{array}\right).
\renewcommand{\arraystretch}{1}
\end{displaymath}
This implies our assertion of Lemma $\ref{jacobi identification}$. 
Similary, we can identify $A^{(k)}, D^{(k)}:l^2(\mathbb{N}) \rightarrow l^2(\mathbb{N})$ with the following Jacobi matrices :
\begin{displaymath}
A^{(k)}=
\renewcommand{\arraystretch}{1.6}
\left(
\begin{array}{ccccccc}
0&a_{k}({1})\\
a_{k}({1})&0&a_{k}({2})\\
&a_{k}({2})&0&a_{k}({3})\\
&&a_{k}({3})&\ddots&\ddots\\
&&&\ddots\\
\end{array}\right)
\renewcommand{\arraystretch}{1},
D^{(k)}=
\renewcommand{\arraystretch}{1.6}
\left(
\begin{array}{ccccccc}
d_{k}({1})&\\
&d_{k}({2})&\\
&&d_{k}({3})&\\
&&&\ddots&\\
&&&\\
\end{array}\right).
\renewcommand{\arraystretch}{1}
\end{displaymath}

\section{}
In this section, we introduce a result about the Fourier analysis of the fractal measure. 
Let $B_r(x)=[x-r,x+r]\subset \mathbb{R}$. Let 
$\mathcal{L}$ be the Lebeage measure on $\mathbb{R}$, and 
$\mu:\mathcal{B}^1 \rightarrow [0,\infty]$ be a locally finite measure.
Let $M_{\mu}f:\mathbb{R} \rightarrow \mathbb{R}$ be defined by
\[
M_{\mu}f(x)
=
\sup_{r>0}
\cfrac{1}{\mu(B_r(x))}
\int_{B_r(x)}|f|d\mu
\]
for $f \in L^1_{\rm loc}(\mathbb{R}, d\mu)$, 
where we take $\frac{0}{0}=0$ if $\mu(B_r(x))=0$. 
$M_{\mu}f$ is measurable and called the Maximal function. 

\begin{Lemma}\label{max}
$M_{\mu}:L^p(\mathbb{R},d\mu)\rightarrow L^p(\mathbb{R},d\mu)$ is bounded for any $p \in (1, \infty)$.
\end{Lemma}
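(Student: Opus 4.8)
The plan is to prove the strong $(p,p)$ bound by interpolating between a weak $(1,1)$ estimate and the trivial $L^\infty$ bound, following the classical Hardy--Littlewood scheme adapted to a general locally finite measure $\mu$. The only measure-geometric input I need is a covering lemma with bounded overlap that does \emph{not} require $\mu$ to be doubling; on $\mathbb{R}$ this is supplied by the Besicovitch covering lemma, which extracts from any family of balls centered at the points of a bounded set a countable subfamily covering the set whose multiplicity is bounded by an absolute constant $C_0$ independent of $\mu$.

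First I would establish the weak $(1,1)$ inequality. Fix $f\in L^1(\mathbb{R},d\mu)$ and $s>0$, and set $A_s=\{x\mid M_\mu f(x)>s\}$. For each $x\in A_s$ there is a radius $r_x>0$ with $\int_{B_{r_x}(x)}|f|\,d\mu > s\,\mu(B_{r_x}(x))$. After reducing to a bounded subset (and exhausting $A_s$ by monotone convergence) and applying Besicovitch to the cover $\{B_{r_x}(x)\}_{x\in A_s}$, I extract balls $B_j$ with $A_s\subset\bigcup_j B_j$ and $\sum_j \mathbbm{1}_{B_j}\le C_0$, whence
\[
\mu(A_s)\le\sum_j\mu(B_j)\le\frac{1}{s}\sum_j\int_{B_j}|f|\,d\mu\le\frac{C_0}{s}\int_{\mathbb{R}}|f|\,d\mu .
\]
This shows that $M_\mu$ is of weak type $(1,1)$ with constant $C_0$.

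Next, $M_\mu$ is trivially of strong, hence weak, type $(\infty,\infty)$: each average of $|f|$ is at most $\|f\|_{L^\infty}$, so $\|M_\mu f\|_{L^\infty}\le\|f\|_{L^\infty}$. Moreover $M_\mu$ is sublinear, $M_\mu(f+g)\le M_\mu f+M_\mu g$, so it is quasi-linear on every $L^p$. The Marcinkiewicz interpolation theorem applied with the endpoints $(1,1)$ and $(\infty,\infty)$ then yields that $M_\mu:L^p(\mathbb{R},d\mu)\to L^p(\mathbb{R},d\mu)$ is of strong type $(p,p)$ for every $p\in(1,\infty)$, which is exactly the assertion.

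The main obstacle is the weak $(1,1)$ step, and specifically the choice of covering lemma: for a non-doubling measure one cannot run the usual Vitali $5r$-covering argument, since passing from a selected ball to its dilate can inflate $\mu$-mass uncontrollably. The Besicovitch lemma circumvents this by retaining the original balls and bounding only their overlap multiplicity by a constant depending solely on the ambient dimension (here, on $\mathbb{R}$), which is precisely what makes the estimate valid for an arbitrary locally finite $\mu$.
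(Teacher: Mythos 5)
Your proposal is correct and follows essentially the same route as the paper: a weak $(1,1)$ estimate via the Besicovitch covering lemma (with the same exhaustion of the level set by bounded pieces), the trivial $(\infty,\infty)$ bound, and Marcinkiewicz interpolation. No substantive differences.
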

\begin{Proof}\rm
Let $E_s^{n}=\{x \in \mathbb{R} \mid |x|\leq n, M_{\mu}f(x)>s\}$ and 
$x \in E_s^n$. There exists $r_x>0$ such that 
\[
\int_{B_{r_x}(x)}|f|d\mu\geq s\mu(B_{r_x}(x)).
\]
Note that $\{B_{r_x}(x)\mid x \in E_s^n \}$ is a Besicovitch covering of $E_s^n$. 
By \cite[II, 18 The Besicovitch covering theorem, Theorem18.1]{real analysis}, we see that there exists a countable subcollections $\{B_j^n\}_{j=1}^{\infty}$ of $\{B_{r_x}(x)\mid x \in E_s^n \}$ such that $\{B_j^n\}_{j=1}^{\infty}$ is a closed covering of $E_s^n$ 
and there exist $C>0$ which is independent of $E_s^n$ such that for any $x \in \mathbb{R}$, 
\[
\1_{E_s^n}(x) \leq \sum_{j=1}^{\infty} \1_{B_j^n}(x) \leq C. 
\]
Hence we have 
\[
s\;\mu(E_s^n)\leq s\sum_{j=1}^{\infty}\mu(B_j^n)
\leq 
\sum_{j=1}^{\infty} \int_{B_j^n}|f|d\mu \leq C\int_{\mathbb{R}}|f|d\mu.
\]
Let $n \rightarrow \infty$. Then we see that for any $f \in L^1(\mathbb{R}, d\mu)$ and $s>0$, 
\[
\mu(\{
x \in \mathbb{R} \mid M_{\mu}f(x)>s
\})
\leq Cs^{-1}\|f\|_{L^1}.
\]
This implies that $M_{\mu}:L^{1}(\mathbb{R},d\mu)\rightarrow L^{1}(\mathbb{R},d\mu)$ is weak $(1,1)$ type.
We also see that $M_{\mu}:L^{\infty}(\mathbb{R},d\mu)\rightarrow L^{\infty}(\mathbb{R},d\mu)$ is
weak $(\infty,\infty)$ type. 
Thus we have our assertion by \cite[VIII, 9 The Marcinkiewicz interpolation theorem, Theorem 9.1]{real analysis}.
\qed
\end{Proof}

We consider the Fourier transformation of the fractal measure. 
Let $f \in L^1(\mathbb{R},d\mu)$ and 
let $\widehat{f\mu}(\xi)$, $\xi \in \mathbb{R}$,  be defined by
\[
\widehat{f\mu}(\xi)=\int_{\mathbb{R}}f(x)e^{-i\xi x} \mu(dx).
\]
\begin{Lemma}
Suppose $\mu$ be a finite measure. Then 
\[
\int_{\mathbb{R}}
|\widehat{f\mu}(\xi)|^2 e^{-t\xi^2}d\xi<\infty.
\]
for any $f\in L^2(\mathbb{R}, d\mu)$ and $t>0$. 
\end{Lemma}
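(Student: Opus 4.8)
The plan is to bound the integrand $|\widehat{f\mu}(\xi)|^2$ uniformly in $\xi$ and then exploit the integrability of the Gaussian weight $e^{-t\xi^2}$ for $t>0$. The finiteness of $\mu$ is precisely what makes this work, since it places the constant function $1$ in $L^2(\mathbb{R},d\mu)$.

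First I would apply the Cauchy--Schwarz inequality with respect to the finite measure $\mu$. Because $|e^{-i\xi x}|=1$, for every $\xi \in \mathbb{R}$ one has
\[
|\widehat{f\mu}(\xi)|
\leq
\int_{\mathbb{R}}|f(x)|\,\mu(dx)
\leq
\left(\int_{\mathbb{R}}1\,\mu(dx)\right)^{\frac{1}{2}}
\left(\int_{\mathbb{R}}|f(x)|^2\,\mu(dx)\right)^{\frac{1}{2}}
=
\mu(\mathbb{R})^{\frac{1}{2}}\,\|f\|_{L^2(\mathbb{R},d\mu)}.
\]
Squaring yields the uniform bound $|\widehat{f\mu}(\xi)|^2 \leq \mu(\mathbb{R})\,\|f\|_{L^2(\mathbb{R},d\mu)}^2$, which is a finite constant independent of $\xi$.

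Next I would integrate this bound against the Gaussian weight, using the elementary fact that $\int_{\mathbb{R}}e^{-t\xi^2}\,d\xi=\sqrt{\pi/t}<\infty$ for $t>0$. This gives
\[
\int_{\mathbb{R}}|\widehat{f\mu}(\xi)|^2 e^{-t\xi^2}\,d\xi
\leq
\mu(\mathbb{R})\,\|f\|_{L^2(\mathbb{R},d\mu)}^2
\sqrt{\frac{\pi}{t}}<\infty,
\]
which is the desired conclusion. Strictly speaking one should first check that $\widehat{f\mu}(\xi)$ is well defined and measurable in $\xi$, but this follows at once from $f\in L^2(\mathbb{R},d\mu)\subset L^1(\mathbb{R},d\mu)$ (again by finiteness of $\mu$) together with Fubini's theorem.

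There is no genuine obstacle here; the only point deserving attention is the role of the hypothesis that $\mu$ is finite. It is exactly this assumption that allows the constant function $1$ to be used as the second factor in Cauchy--Schwarz, producing the $\xi$-independent bound $\mu(\mathbb{R})^{1/2}\|f\|_{L^2(\mathbb{R},d\mu)}$; without it one could not control $\widehat{f\mu}$ uniformly, and the Gaussian damping alone would not suffice.
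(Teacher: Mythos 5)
Your argument is correct, but it takes a different route from the paper. The paper proves this lemma by interpolation: it observes that $f\mapsto\widehat{f\mu}$ is bounded from $L^1(\mathbb{R},d\mu)$ to $L^1(\mathbb{R},e^{-t\xi^2}d\xi)$ and, using the finiteness of $\mu$, from $L^{\infty}(\mathbb{R},d\mu)$ to $L^{\infty}(\mathbb{R},e^{-t\xi^2}d\xi)$, and then invokes the Riesz(--Thorin) interpolation theorem to get boundedness on $L^2$. You instead exploit the finiteness of $\mu$ at the level of the input, via Cauchy--Schwarz, to get the pointwise bound $|\widehat{f\mu}(\xi)|\leq\mu(\mathbb{R})^{1/2}\|f\|_{L^2(\mathbb{R},d\mu)}$, and then simply integrate the Gaussian. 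Your argument is more elementary (no interpolation machinery) and in fact yields the same quantitative bound
\[
\int_{\mathbb{R}}|\widehat{f\mu}(\xi)|^2 e^{-t\xi^2}\,d\xi
\leq
\mu(\mathbb{R})\,\|f\|_{L^2(\mathbb{R},d\mu)}^2\sqrt{\tfrac{\pi}{t}}
\]
that the interpolation of the two endpoint norms produces; the only thing the paper's approach ``buys'' is that it presents the statement as an instance of a general operator-boundedness scheme, which is stylistically consistent with the Marcinkiewicz interpolation used for the maximal function in the same appendix. For the purpose of this lemma --- mere finiteness of the integral --- your direct argument is entirely adequate, and your closing remark correctly identifies that the finiteness of $\mu$ is the essential hypothesis in either approach.
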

\begin{Proof}
\rm
Let $f\in L^1(\mathbb{R},d\mu)$. We see that
\begin{eqnarray}
\int_{\mathbb{R}}|\widehat{f\mu}(\xi)|e^{-t\xi^2}d\xi
&\leq&
\int_{\mathbb{R}}\int_{\mathbb{R}}|f(x)|\mu(dx)e^{-t\xi^2}d\xi
<\infty.
\nonumber
\end{eqnarray}
This implies that 
$L^1(\mathbb{R}, d\mu)\ni f \rightarrow \widehat{f\mu}\in L^1(\mathbb{R}, e^{-t\xi^2}d\xi)$ is bounded. 
Since $\mu$ is a finite measure, 
$L^{\infty}(\mathbb{R}, d\mu)\ni f \rightarrow \widehat{f\mu}\in L^{\infty}(\mathbb{R}, e^{-t\xi^2}d\xi)$ is bounded. We have our assertion by the Riesz interpolation theorem.
\qed
\end{Proof}

\begin{Definition}Let $\alpha \in (0,1)$. 
We say that a measure $\mu$ is uniformly $\alpha$-H\"{o}lder continuous, 
if there exists 
$\widetilde{C}_1>0$ such that $\mu(I)<\widetilde{C}_1\mathcal{L}(I)^{\alpha}$ 
for any interval $I\subset \mathbb{R}$ with $\mathcal{L}(I)<1$.
\end{Definition}

\begin{Lemma}\label{fourierdecay}
Let $\mu$ be a uniformly $\alpha$-H\"{o}lder continuous and finite measure.
Then there exists $\widetilde{C}_2=\widetilde{C}_2(\alpha,\mu)>0$ such that for any $f \in L^2(\mathbb{R}, d\mu)$,
\begin{equation}
\sup_{0<t \leq1}
t^{\frac{1-\alpha}{2}}
\int_{\mathbb{R}}
|\widehat{f\mu}(\xi)|^2 e^{-t\xi^2}d\xi<\widetilde{C}_2\|f\|_{L^2}^2.
\nonumber
\end{equation}
\end{Lemma}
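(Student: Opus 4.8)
The plan is to move everything to the ``spatial side'': expand the square, integrate out the frequency variable against the Gaussian, and reduce the whole estimate to a single pointwise kernel bound, which is exactly where the uniform $\alpha$-Hölder continuity enters.

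First I would expand $|\widehat{f\mu}(\xi)|^2=\int_{\mathbb{R}}\int_{\mathbb{R}}f(x)\overline{f(y)}e^{-i\xi(x-y)}\mu(dx)\mu(dy)$ and apply Fubini's theorem. This is justified because $\mu$ is finite, so $f\in L^2(\mathbb{R},d\mu)\subset L^1(\mathbb{R},d\mu)$, the kernel $e^{-(x-y)^2\cdot 0}$ type factors are bounded, and $\xi\mapsto e^{-t\xi^2}$ is integrable. Evaluating the inner Gaussian integral $\int_{\mathbb{R}}e^{-i\xi u-t\xi^2}d\xi=\sqrt{\pi/t}\,e^{-u^2/(4t)}$ gives
\[
\int_{\mathbb{R}}|\widehat{f\mu}(\xi)|^2 e^{-t\xi^2}d\xi=\sqrt{\tfrac{\pi}{t}}\int_{\mathbb{R}}\int_{\mathbb{R}}f(x)\overline{f(y)}\,K_t(x,y)\,\mu(dx)\mu(dy),\qquad K_t(x,y)=e^{-(x-y)^2/(4t)}.
\]
I would then take absolute values, use $|f(x)\overline{f(y)}|\le\tfrac12(|f(x)|^2+|f(y)|^2)$ together with the symmetry $K_t(x,y)=K_t(y,x)$ and a relabeling of variables to collapse the double integral to the single integral
\[
\int_{\mathbb{R}}|\widehat{f\mu}(\xi)|^2 e^{-t\xi^2}d\xi\le\sqrt{\tfrac{\pi}{t}}\int_{\mathbb{R}}|f(x)|^2\Big(\int_{\mathbb{R}}e^{-(x-y)^2/(4t)}\mu(dy)\Big)\mu(dx).
\]

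The whole statement now reduces to the uniform kernel bound
\[
\sup_{x\in\mathbb{R}}\int_{\mathbb{R}}e^{-(x-y)^2/(4t)}\mu(dy)\le C\,t^{\alpha/2},\qquad 0<t\le1,
\]
for some $C=C(\alpha,\mu)>0$: substituting this bound gives $\sqrt{\pi/t}\cdot C t^{\alpha/2}\|f\|_{L^2}^2=C\sqrt{\pi}\,t^{(\alpha-1)/2}\|f\|_{L^2}^2$, and multiplying by $t^{(1-\alpha)/2}$ yields the assertion with $\widetilde{C}_2=C\sqrt{\pi}$. The kernel bound is the main obstacle, and I would prove it by a dyadic decomposition in $|x-y|$. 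On the shells $\{2^{k-1}\sqrt{t}\le|x-y|<2^k\sqrt{t}\}$ with $2^k\sqrt{t}<1/2$, the Hölder hypothesis gives $\mu(\{|x-y|<2^k\sqrt t\})\le\widetilde{C}_1(2^{k+1}\sqrt{t})^\alpha$, while the Gaussian contributes a super-exponentially small factor $e^{-c4^{k}}$; summing the convergent series $\sum_k 2^{k\alpha}e^{-c4^{k}}$ produces a contribution $\le C't^{\alpha/2}$. For the remaining far region, where the relevant intervals have length $\ge1$ and the Hölder bound is unavailable, I would use only the finiteness $\mu(\mathbb{R})<\infty$ together with the fact that there the Gaussian is at most $e^{-c/t}$, and $e^{-c/t}\le C''t^{\alpha/2}$ for all $t\in(0,1]$ since exponential decay beats any power.

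Equivalently, one may write the Gaussian via the layer-cake formula as $\int_{0}^{1}\mu(B_{r(\lambda)}(x))\,d\lambda$ with $r(\lambda)=2\sqrt{t\log(1/\lambda)}$ and integrate the Hölder estimate directly; the maximal inequality of Lemma~\ref{max} provides the analogous bound for the corresponding $L^p$ statement. The only real care needed is in bookkeeping the threshold $2^k\sqrt{t}\approx1$ separating the Hölder regime from the finiteness regime, and in verifying that the geometric-times-Gaussian series converges with a constant uniform in $t\in(0,1]$; everything else is a routine estimate.
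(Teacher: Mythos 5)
Your proof is correct, and it reaches the same reduction as the paper (expand the square, integrate out $\xi$ against the Gaussian to get the kernel $e^{-(x-y)^2/(4t)}$, then use the H\"older bound for small radii and $\mu(\mathbb{R})<\infty$ for large ones), but it handles the $f$-dependence by a genuinely different and more elementary device. The paper keeps the product $|f(x)|\,|f(y)|$, writes the Gaussian by the layer-cake formula $e^{-(x-y)^2/(4t)}=\int_{|x-y|}^{\infty}\frac{r}{2t}e^{-r^2/(4t)}\,dr$, bounds $\int_{B_r(x)}|f|\,d\mu\le\mu(B_r(x))\,M_{\mu}f(x)$, and then must invoke the $L^2(\mu)$-boundedness of the maximal operator (Lemma~\ref{max}), which in turn rests on the Besicovitch covering theorem and Marcinkiewicz interpolation. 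Your symmetrization $|f(x)\overline{f(y)}|\le\tfrac12\left(|f(x)|^2+|f(y)|^2\right)$ collapses the double integral to $\int|f(x)|^2\bigl(\int K_t(x,y)\,\mu(dy)\bigr)\mu(dx)$ and reduces the lemma to the purely measure-theoretic bound $\sup_x\int e^{-(x-y)^2/(4t)}\mu(dy)\le Ct^{\alpha/2}$, which your dyadic-shell computation establishes; no maximal function is needed at all. What the paper's route buys is the machinery for the maximal function, which it has anyway in the appendix; what your route buys is a shorter, self-contained proof of this lemma. One small bookkeeping point you should make explicit: for $t$ bounded away from $0$ (say $t\ge 1/4$), the central ball $B_{\sqrt{t}}(x)$ has length $\ge 1$ and carries Gaussian weight close to $1$, so neither the H\"older regime nor the $e^{-c/t}$ decay applies there; the kernel bound holds on that range simply because $t^{\alpha/2}$ is bounded below and the integral is at most $\mu(\mathbb{R})$. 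With that remark added, the argument is complete.
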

\begin{Proof}\rm
Let $f \in L^2(\mathbb{R}, d\mu)$. We see that
\begin{eqnarray}
t^{\frac{1-\alpha}{2}}
\int_{\mathbb{R}}
|\widehat{f\mu}(\xi)|^2 e^{-t\xi^2}d\xi
&=&
t^{\frac{1-\alpha}{2}}
\int_{\mathbb{R}}\mu(dx)
\int_{\mathbb{R}}\mu(dy)
f(x)\overline{f(y)}
\int_{\mathbb{R}}e^{-t\xi^2-i\xi(x-y)}d\xi 
\nonumber\\
&=&
\pi
t^{-\frac{\alpha}{2}}
\int_{\mathbb{R}}\mu(dx)
\int_{\mathbb{R}}\mu(dy)
f(x)\overline{f(y)}
e^{-\frac{(x-y)^2}{4}}
\nonumber\\
&=&
\pi
t^{-\frac{\alpha}{2}}
\int_{\mathbb{R}}\mu(dx)
\int_{\mathbb{R}}\mu(dy)
f(x)\overline{f(y)}
\int_{|x-y|}^{\infty}
\cfrac{r}{2t}\;e^{-\frac{r^2}{4t}}dr
\nonumber\\
&\leq&
\pi
t^{-\frac{\alpha}{2}}
\int_{\mathbb{R}}\mu(dx)\;
|f(x)|
\int_{0}^{\infty}
dr
\cfrac{r}{2t}\;
e^{-\frac{r^2}{4t}}
\int_{B_r(x)}\mu(dy)
|f(y)|
\nonumber\\
&\leq&
\pi
t^{-\frac{\alpha}{2}}
\int_{\mathbb{R}}\mu(dx)\;
|f(x)|
\int_{0}^{\infty}
dr
\cfrac{r}{2t}\;
e^{-\frac{r^2}{4t}}
\mu(B_r(x))
M_{\mu}f(x).
\nonumber
\end{eqnarray}
Since $\mu$ is uniformly $\alpha$-H\"{o}lder continuous and finite, 
there exists $\widetilde{C}_2^{\prime}=\widetilde{C}_2^{\prime}(\alpha,\mu)>0$ such that for any $t \in (0,1]$,
\begin{eqnarray}
t^{-\frac{\alpha}{2}}
\int_{0}^{\infty}
dr
\cfrac{r}{t}\;
e^{-\frac{r^2}{4t}}
\mu(B_r(x))
&\leq&
\mu({\mathbb{R}})
t^{-\frac{\alpha}{2}}
\int_{1}^{\infty}
dr
\cfrac{r}{t}\;
e^{-\frac{r^2}{4t}}
+\widetilde{C}_1t^{-\frac{\alpha}{2}}
\int_{0}^{1}
dr
\cfrac{r^{1+\alpha}}{t}\;
e^{-\frac{r^2}{4t}}
\nonumber\\
&\leq&
2\mu({\mathbb{R}})
t^{-\frac{\alpha}{2}}
e^{-\frac{1}{4t}}
+2^{2+\alpha}\widetilde{C}_1
\int_{0}^{\infty}
s^{1+\alpha}
e^{-s^2}
ds
\nonumber
\\
&\leq&
\widetilde{C}_2^{\prime}.
\nonumber 
\end{eqnarray}
Let $\widetilde{C}_2=\pi \widetilde{C}_2^{\prime}>0$. By Schwarz inequality, we see that for any $t\in (0,1)$,
\begin{eqnarray}
&&
t^{\frac{1-\alpha}{2}}
\int_{\mathbb{R}}
|\widehat{f\mu}(\xi)|^2 e^{-t\xi^2}d\xi
\leq
\widetilde{C}_2
\int_{\mathbb{R}}\mu(dx)\;
|f(x)|
M_{\mu}f(x)
\leq
D_2
\|f\|_{L^2}\|M_{\mu}f\|_{L^2}.
\nonumber
\end{eqnarray}
By Lemma \ref{max}, we have our assertion.
\qed
\end{Proof}
\begin{Lemma}\label{cor}
Let $\mu$ be a uniformly $\alpha$-H\"{o}lder continuous and finite measure.
Then there exists $\widetilde{C}_3=\widetilde{C}_3(\alpha,\mu)>0$ such tha for any $f \in L^2(\mathbb{R}, d\mu)$,
\[
\sup_{T\geq 1}
T^{\alpha-1}
\int_{0}^T
|\widehat{f\mu}(\xi)|^2d\xi
\leq \widetilde{C}_3\|f\|^2.
\]
\end{Lemma}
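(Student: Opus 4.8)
The plan is to deduce the sharp-cutoff estimate directly from the Gaussian-smoothed estimate of Lemma \ref{fourierdecay}, by choosing the Gaussian width to match the truncation length. The guiding principle is the standard Tauberian heuristic: a Gaussian weight $e^{-t\xi^2}$ of width $t^{-1/2}$ behaves, for the purpose of these power-type bounds, like a sharp cutoff at scale $T \sim t^{-1/2}$.

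First I would fix $T \geq 1$ and set $t = T^{-2}$, which lies in $(0,1]$ precisely because $T \geq 1$, so Lemma \ref{fourierdecay} applies. For $\xi \in [0,T]$ one has $t\xi^2 = \xi^2/T^2 \leq 1$, hence $e^{-t\xi^2} \geq e^{-1}$; since the integrand $|\widehat{f\mu}(\xi)|^2$ is nonnegative, this yields
\[
\int_0^T |\widehat{f\mu}(\xi)|^2 \, d\xi \leq e \int_0^T |\widehat{f\mu}(\xi)|^2 e^{-t\xi^2}\, d\xi \leq e \int_{\mathbb{R}} |\widehat{f\mu}(\xi)|^2 e^{-t\xi^2}\, d\xi .
\]
Next I would invoke Lemma \ref{fourierdecay} with this value of $t$, which gives
\[
\int_{\mathbb{R}} |\widehat{f\mu}(\xi)|^2 e^{-t\xi^2}\, d\xi \leq \widetilde{C}_2 \|f\|_{L^2}^2 \, t^{-\frac{1-\alpha}{2}} = \widetilde{C}_2 \|f\|_{L^2}^2 \, T^{1-\alpha},
\]
where the last equality uses $t^{-\frac{1-\alpha}{2}} = (T^{-2})^{-\frac{1-\alpha}{2}} = T^{1-\alpha}$. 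Combining the two displays and multiplying through by $T^{\alpha-1}$ gives
\[
T^{\alpha-1}\int_0^T |\widehat{f\mu}(\xi)|^2\, d\xi \leq e\, \widetilde{C}_2 \|f\|_{L^2}^2 ,
\]
so that setting $\widetilde{C}_3 = e\,\widetilde{C}_2$ and taking the supremum over $T \geq 1$ finishes the argument.

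I do not anticipate a genuine obstacle here: the whole statement is a one-line passage from Gaussian smoothing to a sharp cutoff, and the only point requiring care is confirming that the chosen scale $t = T^{-2}$ remains inside the admissible range $(0,1]$ of Lemma \ref{fourierdecay}, which is exactly what the hypothesis $T \geq 1$ guarantees. The bookkeeping that makes everything line up is the exponent matching $t \leftrightarrow T^{-2}$, which converts the $t^{(1-\alpha)/2}$ weight of Lemma \ref{fourierdecay} into the $T^{\alpha-1}$ weight demanded in the statement.
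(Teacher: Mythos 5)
Your argument is correct and is essentially identical to the paper's proof: both set $t=T^{-2}$ (equivalently $T=t^{-1/2}$), note that $e^{-t\xi^{2}}\geq e^{-1}$ on $[0,T]$, and apply Lemma \ref{fourierdecay}, with the exponent matching $t^{(1-\alpha)/2}=T^{\alpha-1}$ giving the stated bound with $\widetilde{C}_3=e\,\widetilde{C}_2$. No gaps.
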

\begin{Proof}\rm
Let $t \in (0,1)$ and $T=t^{-\frac{1}{2}}$. 
By Lemma \ref{fourierdecay}, we see that for any $T>1$,
\begin{eqnarray}
\widetilde{C}_2\|f\|^2
\geq
T^{\alpha-1}
\int_{\mathbb{R}}
|\widehat{f\mu}(\xi)|^2 e^{-(\frac{\xi}{T})^2}d\xi
\geq
e^{-1}
T^{\alpha-1}
\int_{0}^T
|\widehat{f\mu}(\xi)|^2 d\xi.
\nonumber
\end{eqnarray}
This implies our assertion.
\qed
\end{Proof}

\begin{Lemma}\label{cor1}
Let $\mu$ be a uniformly $\alpha$-H\"{o}lder continuous and finite measure.
Then there exists $\widetilde{C}_4=\widetilde{C}_4(\alpha,\mu)>0$ such that for any $f \in L^2(\mathbb{R}, d\mu)$,
\begin{equation}
\sup_{T\geq 1}
T^{\alpha-1}
\int_{0}^{\infty}
e^{-\frac{t}{T}}
|\widehat{f\mu}(t)|^2dt
\leq
\widetilde{C}_4\|f\|^2.
\nonumber
\end{equation}
\end{Lemma}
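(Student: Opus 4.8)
The plan is to bootstrap from Lemma \ref{cor}, which controls the flat cutoff $\int_0^T |\widehat{f\mu}(\xi)|^2\,d\xi$, to the exponentially weighted integral by an Abel-summation (Fubini) argument. Write $F(\xi)=|\widehat{f\mu}(\xi)|^2\ge 0$ and set $G(S)=\int_0^S F(\xi)\,d\xi$. Lemma \ref{cor} gives $G(S)\le \widetilde{C}_3\,S^{1-\alpha}\|f\|^2$ for all $S\ge 1$. Since $G$ is nondecreasing and $G(1)\le\widetilde{C}_3\|f\|^2$, for $0<S<1$ I would simply bound $G(S)\le G(1)\le \widetilde{C}_3\|f\|^2$, so that the single estimate $G(S)\le \widetilde{C}_3(1+S^{1-\alpha})\|f\|^2$ is valid for every $S>0$.

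Next, since all integrands are nonnegative, Tonelli's theorem justifies exchanging the order of integration in the identity $e^{-t/T}=T^{-1}\int_t^\infty e^{-s/T}\,ds$. This yields
\[
\int_0^\infty e^{-t/T} F(t)\,dt=\frac{1}{T}\int_0^\infty e^{-s/T} G(s)\,ds,
\]
which transfers the whole problem onto the already-controlled quantity $G$.

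Inserting the bound on $G$ and evaluating the two elementary integrals $\int_0^\infty e^{-s/T}\,ds=T$ and $\int_0^\infty e^{-s/T}s^{1-\alpha}\,ds=c_\alpha T^{2-\alpha}$ (by the substitution $s=Tu$, where $c_\alpha=\int_0^\infty e^{-u}u^{1-\alpha}\,du<\infty$), I obtain
\[
\int_0^\infty e^{-t/T} F(t)\,dt\le \widetilde{C}_3\|f\|^2\bigl(1+c_\alpha\,T^{1-\alpha}\bigr).
\]
Multiplying by $T^{\alpha-1}$ and using $T^{\alpha-1}\le 1$ for $T\ge 1$ (valid since $\alpha<1$) gives the asserted uniform bound with $\widetilde{C}_4=\widetilde{C}_3\bigl(1+c_\alpha\bigr)$.

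This argument is essentially routine: the only points demanding a little care are the extension of the $G$-estimate to the range $0<S<1$, handled by monotonicity of $G$, and checking that the finiteness already guaranteed by the preceding Gaussian-weight lemma, together with the nonnegativity of the integrands, legitimizes the Tonelli exchange. I expect no genuine obstacle here, the content of the statement being entirely carried by Lemma \ref{cor}.
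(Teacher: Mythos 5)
Your argument is correct, and it rests on the same foundation as the paper's proof: everything is bootstrapped from Lemma \ref{cor}, i.e.\ from the bound $\int_0^S|\widehat{f\mu}(\xi)|^2\,d\xi\leq \widetilde{C}_3 S^{1-\alpha}\|f\|^2$ for $S\geq 1$. Where you differ is in how the exponential weight is handled. The paper partitions $[0,\infty)$ into the blocks $[Tn,T(n+1))$, bounds $e^{-t/T}\leq e^{-n}$ on each block, applies Lemma \ref{cor} at scale $T(n+1)$, and sums the convergent series $\sum_{n\geq 0}(n+1)^{1-\alpha}e^{-n}$. You instead perform a continuous Abel summation: writing $e^{-t/T}=T^{-1}\int_t^{\infty}e^{-s/T}\,ds$ and applying Tonelli (nonnegativity alone suffices here; you do not actually need the finiteness supplied by the Gaussian-weight lemma) converts the integral into $T^{-1}\int_0^\infty e^{-s/T}G(s)\,ds$ with $G(s)=\int_0^s|\widehat{f\mu}|^2$, after which the pointwise bound $G(s)\leq\widetilde{C}_3(1+s^{1-\alpha})\|f\|^2$ (your extension to $0<s<1$ by monotonicity is the right small fix) and the Gamma-function integral $\int_0^\infty e^{-u}u^{1-\alpha}\,du$ finish the job. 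The two routes are of comparable length; yours yields an exact identity before any estimation and produces the cleaner explicit constant $\widetilde{C}_3(1+\Gamma(2-\alpha))$, while the paper's block decomposition avoids any interchange of integrals at the cost of a slightly cruder bound on the weight. Both are complete proofs.
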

\begin{Proof}\rm 
Let $f \in L^2(\mathbb{R}, d\mu)$.  
Then, by Lemma \ref{cor}, we see that for any $T>1$, 
\begin{eqnarray}
T^{\alpha-1}
\int_{0}^{\infty}e^{-\frac{t}{T}}|\widehat{f\mu}(t)|^2dt
&=&
\lim_{N\rightarrow \infty}
T^{\alpha-1}
\int_{0}^{T(N+1)}e^{-\frac{t}{T}}|\widehat{f\mu}(t)|^2dt
\nonumber
\\
&=&
\lim_{N\rightarrow \infty}
T^{\alpha-1}
\sum_{n=0}^{N}
\int_{Tn}^{T(n+1)}e^{-\frac{t}{T}}|\widehat{f\mu}(t)|^2dt
\nonumber
\\
&=&
\sum_{n=0}^{\infty}
(n+1)^{1-\alpha}e^{-n}
\{T(n+1)\}^{\alpha-1}
\int_{Tn}^{T(n+1)}|\widehat{f\mu}(t)|^2dt
\nonumber
\\
&\leq&
\widetilde{C}_3 \|f\|^2
\sum_{n=0}^{\infty}
(n+1)^{1-\alpha}e^{-n}.
\nonumber
\end{eqnarray}
This implies our assertion.\qed
\end{Proof}

\section{}
In this section we give some lemmas about quadratic form theory which is used in Seciotin 4.
Let $\mathcal{H}$ ba a complex Hilbert space. Let $\mathfrak{s}:\mathcal{H}\times \mathcal{H}\rightarrow \mathbb{C}$
be a closed sesquilinear form, and 
$T:\mathcal{H}\rightarrow \mathcal{H}$ be a closed linear operator.
We say that $\mathfrak{s}$ is symmetric, if 
$\mathfrak{s}(f,g)=\overline{\mathfrak{s}(g,f)}$
for $f,g \in \mathcal{D}(\mathfrak{s})$, and that
$\mathfrak{s}$ is sectorial, if 
there exist $r\in \mathbb{R}$ and $\theta \in (-\frac{\pi}{2}, \frac{\pi}{2})$ such that 
for $f \in \mathcal{D}(\mathfrak{s})$ with $\|f\|=1$,
\[
{\rm arg}(\mathfrak{s}[f]-r)\leq \theta,
\]
where $\mathfrak{s}[f]= \mathfrak{s}(f,f)$. We say that $T$ is sectorial, if 
there exist $r\in \mathbb{R}$ and $\theta \in (-\frac{\pi}{2}, \frac{\pi}{2})$ such that 
for $f \in \mathcal{D}(\mathfrak{s})$ with $\|f\|=1$,
\[
{\rm arg}((f,Tf)-r)\leq \theta,
\]
and that $T$ is m-accretive, if 
${\rm Re}(f,Tf)\geq 0$ for $f \in \mathcal{D}(T)$ and 
$(T+\lambda)^{-1}$ is bounded and $\|(T+\lambda)^{-1}\|\leq ({\rm Re}\lambda)^{-1}$
for $\lambda \in \mathbb{C}$ with ${\rm Re} \lambda > 0$.
In particular, $T$ is said to be quasi m-accretive, if 
there exists $\gamma\in \mathbb{R}$ such that 
$T+\gamma$ is m-accretive, and 
$T$ is said to be m-sectorial, if 
$T$ s quasi m-accretive and sectorial.
\begin{Lemma}
Let $\mathfrak{s}:\mathcal{H}\times \mathcal{H}\rightarrow \mathbb{C}$ 
be a densely defined, closed, and sectorial sesquilinear form. Then 
there exist a unique m-sectorial operator $S:\mathcal{H}\rightarrow \mathcal{H}$
such that for $f \in \mathcal{D}(\mathfrak{s}), g\in \mathcal{D}(S)$,
\begin{equation}
\mathfrak{s}(f,g)=(f,Sg).
\nonumber
\end{equation}
\end{Lemma}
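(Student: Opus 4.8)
The statement is Kato's first representation theorem for sectorial forms, and the plan is to realize $S$ as the inverse of a Lax--Milgram operator built on the form domain. \emph{First} I would normalize. Since $\mathfrak{s}$ is sectorial, its numerical range $\{\mathfrak{s}[f] : \|f\|=1\}$ lies in a sector with vertex $r\in\mathbb{R}$ and half-angle $\theta<\pi/2$, so the shifted form $\mathfrak{t}(f,g):=\mathfrak{s}(f,g)-(r-1)(f,g)$ satisfies $\mathrm{Re}\,\mathfrak{t}[f]\ge\|f\|^2$ and $|\mathrm{Im}\,\mathfrak{t}[f]|\le(\tan\theta)\,\mathrm{Re}\,\mathfrak{t}[f]$ for all $f\in\mathcal{D}(\mathfrak{s})$. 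It suffices to construct the m-sectorial operator $T$ associated with $\mathfrak{t}$; then $S:=T+(r-1)$ satisfies the lemma, and m-sectoriality is preserved under the real shift.

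\emph{Second} I would make the form domain into a Hilbert space. Put $\langle f,g\rangle_+:=\tfrac{1}{2}(\mathfrak{t}(f,g)+\overline{\mathfrak{t}(g,f)})$ and $\|f\|_+:=(\mathrm{Re}\,\mathfrak{t}[f])^{1/2}$; then $\mathcal{V}:=\mathcal{D}(\mathfrak{s})$ is an inner-product space whose completeness is exactly the closedness hypothesis on $\mathfrak{s}$. Because $\|f\|_+\ge\|f\|$, the inclusion $\mathcal{V}\hookrightarrow\mathcal{H}$ is continuous, injective, and dense (denseness being the hypothesis that $\mathfrak{s}$ is densely defined). The crucial analytic input at this stage is that the sector condition upgrades to boundedness of the full sesquilinear form on $\mathcal{V}$, namely $|\mathfrak{t}(f,g)|\le(1+\tan\theta)\|f\|_+\|g\|_+$; this is obtained by a polarization argument controlling the imaginary part of $\mathfrak{t}$ by its real part.

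\emph{Third} I would apply Lax--Milgram. On $\mathcal{V}$ the form $\mathfrak{t}$ is bounded by the previous step and coercive since $\mathrm{Re}\,\mathfrak{t}[f]=\|f\|_+^2$. Hence for each $u\in\mathcal{H}$ the conjugate-linear functional $f\mapsto(f,u)$ is bounded on $\mathcal{V}$, and Lax--Milgram produces a unique $Gu\in\mathcal{V}$ with $\mathfrak{t}(f,Gu)=(f,u)$ for all $f\in\mathcal{V}$, where $G:\mathcal{H}\to\mathcal{V}\subset\mathcal{H}$ is bounded with $\|G\|\le1$. Since $Gu=0$ forces $(f,u)=0$ on the dense set $\mathcal{V}$ and hence $u=0$, the operator $G$ is injective, so I define $T:=G^{-1}$ on $\mathcal{D}(T):=\mathrm{Ran}(G)$. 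Then for $g=Gu\in\mathcal{D}(T)$ and every $f\in\mathcal{V}$ one reads off $\mathfrak{t}(f,g)=(f,u)=(f,Tg)$, which is the asserted representation.

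\emph{Finally} I would check m-sectoriality and uniqueness. For $g\in\mathcal{D}(T)\subset\mathcal{V}$ we have $(g,Tg)=\mathfrak{t}[g]$, so the numerical range of $T$ sits in the sector and $T$ is sectorial; moreover $\mathrm{Re}(g,Tg)=\mathrm{Re}\,\mathfrak{t}[g]\ge\|g\|^2\ge0$ gives accretivity, and running the same Lax--Milgram construction on $\mathfrak{t}+\lambda$ (still bounded and coercive on $\mathcal{V}$ for $\mathrm{Re}\,\lambda\ge0$) shows $T+\lambda$ maps $\mathcal{D}(T)$ bijectively onto $\mathcal{H}$ with $\|(T+\lambda)^{-1}\|\le(\mathrm{Re}\,\lambda)^{-1}$, so $T$ is m-accretive and hence $S$ is m-sectorial. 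For uniqueness, if $S'$ is m-sectorial with $\mathfrak{s}(f,g)=(f,S'g)$ on $\mathcal{D}(\mathfrak{s})\times\mathcal{D}(S')$, then $T':=S'-(r-1)$ also represents $\mathfrak{t}$; for $g'\in\mathcal{D}(T')$ the identity $\mathfrak{t}(f,g')=(f,T'g')$ and the uniqueness in Lax--Milgram give $g'=G(T'g')\in\mathcal{D}(T)$ with $Tg'=T'g'$, so $T'\subseteq T$, and since both $T$ and $T'$ map their domains bijectively onto $\mathcal{H}$ this forces $T=T'$ and $S=S'$. I expect the boundedness estimate $|\mathfrak{t}(f,g)|\le(1+\tan\theta)\|f\|_+\|g\|_+$, which transfers the sector condition to off-diagonal form values, to be the main obstacle, with the resolvent and uniqueness bookkeeping secondary.
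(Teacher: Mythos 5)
The paper does not actually prove this lemma: it is Kato's first representation theorem for sectorial forms, and the proof given is only the citation to \cite[VI, \S 2, Theorem 2.1]{Kato}. Your construction --- shift the form to make it coercive, complete $\mathcal{D}(\mathfrak{s})$ under $\|f\|_+^2=\mathrm{Re}\,\mathfrak{t}[f]$, bound $\mathfrak{t}$ on that space via the generalized Cauchy--Schwarz estimate $|\mathfrak{t}_I(f,g)|\le(\tan\theta)\|f\|_+\|g\|_+$, and realize $T$ as the inverse of the Lax--Milgram operator $G$ --- is precisely the standard proof of the cited theorem and is correct, including the m-accretivity and uniqueness bookkeeping. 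The one detail worth adding is that $\mathcal{D}(T)=\mathrm{Ran}(G)$ is dense in $\mathcal{H}$ (run the same Lax--Milgram argument on the adjoint form $\overline{\mathfrak{t}(g,f)}$ to see that $G^*$ is injective, so $\overline{\mathrm{Ran}(G)}=(\ker G^*)^{\perp}=\mathcal{H}$); this is needed for $S$ to be a densely defined operator as the rest of the paper tacitly assumes.
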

\begin{Proof}\rm
\cite[VI, \S2, Theorem 2.1]{Kato}
\end{Proof}
\begin{Lemma}\label{quadratic thm}
Let $\mathfrak{t}:\mathcal{H}\times \mathcal{H}\rightarrow \mathbb{C}$ be 
a densely defined, closed, and symmetric form bounded from below, and 
let $T$ be the self-adjoint operator associated with $\mathfrak{t}$.
Suppose that $\mathfrak{s}:\mathcal{H}\times \mathcal{H}\rightarrow \mathbb{C}$ is 
a relatively bounded sesquilinear form with respect to $\mathfrak{t}$ such that 
for any $f \in \mathcal{D}(\mathfrak{t}) \subset \mathcal{D}(\mathfrak{s})$,
\[
|\mathfrak{s}[f]|\leq a\mathfrak{t}[f]+b\|f\|^2, \qquad 0<a<1,\;b\geq0.
\]
Then $\mathfrak{t}^{\prime}= \mathfrak{s}+\mathfrak{t}$ is sectorial and closed. 
Let $T^{\prime}$ be the m-sectorial operators associated with $\mathfrak{t}^{\prime}$.
If $0<\gamma<1$, $z\in \rho(T)$ and 
\[
2\|(aT+b)(T-z)^{-1}\|\leq \gamma <1,
\]
then
$z\in \rho (T^{\prime})$ and 
\[
\|(T^{\prime}-z)^{-1}-(T-z)^{-1}\|\leq \cfrac{4\gamma}{(1-\gamma)^2}\|(T-z)^{-1}\|.
\]
\end{Lemma}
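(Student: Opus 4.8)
The plan is to treat $\mathfrak{s}$ as a bounded perturbation at the level of forms and to produce the resolvent of $T'$ by an explicit algebraic formula whose only substantial ingredient is the invertibility of one bounded operator. First I would dispose of the qualitative assertion: since $\mathfrak{s}$ is $\mathfrak{t}$-bounded with relative bound $a<1$, the sum $\mathfrak{t}'=\mathfrak{t}+\mathfrak{s}$ is again sectorial and closed on $\mathcal{D}(\mathfrak{t}')=\mathcal{D}(\mathfrak{t})$, and the representation theorem stated above (\cite[VI, \S2, Theorem 2.1]{Kato}) furnishes the associated m-sectorial operator $T'$. This is standard form-stability, so I would simply invoke it.

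For the resolvent estimate, set $W=aT+b$ and abbreviate $R_z=(T-z)^{-1}$. The key is to factor $\mathfrak{s}$ through $|W|^{1/2}$: since the right-hand side $a\mathfrak{t}[f]+b\|f\|^2$ is the quadratic form of $W$, which is dominated by that of $|W|$, the bound gives $|\mathfrak{s}[f]|\le\||W|^{1/2}f\|^2$. Hence $\mathfrak{s}$ descends to a bounded sesquilinear form on $\overline{\mathrm{Ran}\,|W|^{1/2}}$ of numerical radius at most $1$, so there is a bounded $\hat{S}$ with $\mathfrak{s}(f,g)=(|W|^{1/2}f,\hat{S}\,|W|^{1/2}g)$ and $\|\hat{S}\|\le 2$ (the numerical radius controls the norm up to a factor $2$). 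All of $|W|^{1/2}$, $R_z$ and their products are functions of the single self-adjoint operator $T$, hence commute; in particular $M(z):=|W|^{1/2}R_z|W|^{1/2}=|W|R_z$ is bounded with $\|M(z)\|=\||aT+b|(T-z)^{-1}\|=\|(aT+b)(T-z)^{-1}\|$. The hypothesis $2\|(aT+b)R_z\|\le\gamma$ then yields $\|\hat{S}M(z)\|\le\|\hat{S}\|\,\|M(z)\|\le\gamma<1$, so $I+\hat{S}M(z)$ is boundedly invertible with $\|(I+\hat{S}M(z))^{-1}\|\le(1-\gamma)^{-1}$.

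With this in hand I would write down the candidate resolvent
\[
G_z=R_z-F^{\flat}(I+\hat{S}M(z))^{-1}\hat{S}F,\qquad F=|W|^{1/2}R_z,\quad F^{\flat}=\big(|W|^{1/2}(T-\bar z)^{-1}\big)^{*},
\]
and verify that $u=G_zv$ solves the weak equation $\mathfrak{t}'(w,u)-z(w,u)=(w,v)$ for every $w\in\mathcal{D}(\mathfrak{t})$. This formula is the push-through rearrangement of the formal series $R_z(I+SR_z)^{-1}$ with $S=|W|^{1/2}\hat{S}|W|^{1/2}$, and its verification uses only the commuting functional calculus of $T$ and the representation of $\mathfrak{s}$; one checks en route that $G_zv\in\mathcal{D}(\mathfrak{t})$. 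The invertibility established above makes $G_z$ well defined, which simultaneously gives $z\in\rho(T')$ and $(T'-z)^{-1}=G_z$. Finally, from $\|F\|^2=\|(T-\bar z)^{-1}|W|R_z\|\le\|R_z\|\,\|M(z)\|\le\tfrac{\gamma}{2}\|R_z\|$ and $\|F^{\flat}\|=\|F\|$ one reads off
\[
\|(T'-z)^{-1}-(T-z)^{-1}\|\le\|F^{\flat}\|\,\|(I+\hat{S}M(z))^{-1}\|\,\|\hat{S}\|\,\|F\|\le\frac{\gamma}{1-\gamma}\|R_z\|\le\frac{4\gamma}{(1-\gamma)^2}\|(T-z)^{-1}\|,
\]
the last step being crude but sufficient.

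The main obstacle I anticipate is not the norm bookkeeping but the rigorous handling of the form-level step: extracting the bounded representative $\hat{S}$ (its well-definedness on $\overline{\mathrm{Ran}\,|W|^{1/2}}$ and the numerical-radius-to-norm factor $2$, which is precisely the origin of the factor $2$ in the hypothesis), and checking that the candidate $G_zv$ lies in the form domain and satisfies the weak identity rather than a merely formal operator identity, since $S$ is not itself an operator. Once the factorization through $|W|^{1/2}$ is correctly set up, every remaining manipulation is a commuting functional-calculus computation.
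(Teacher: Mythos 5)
The paper does not actually prove this lemma: its ``proof'' is the single citation \cite[VI, \S3, Theorem 3.9]{Kato}, so there is no in-paper argument to compare against. Your proposal is, in substance, a correct reconstruction of Kato's own proof of that theorem: the factorization of $\mathfrak{s}$ through $|aT+b|^{1/2}$ with the polarization/scaling argument giving $|\mathfrak{s}(f,g)|\le 2\,\||W|^{1/2}f\|\,\||W|^{1/2}g\|$ (hence $\|\hat S\|\le 2$, which is indeed where the factor $2$ in the hypothesis comes from), the identification $M(z)=|W|R_z$ by the commuting functional calculus, the Neumann-series invertibility of $I+\hat S M(z)$, and the push-through resolvent formula all check out, and your final constant $\gamma/(1-\gamma)$ is sharper than the stated $4\gamma/(1-\gamma)^2$. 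Two points need to be made explicit in a full write-up. First, verifying the weak equation $\mathfrak{t}'(w,G_zv)-z(w,G_zv)=(w,v)$ for all $w\in\mathcal{D}(\mathfrak{t})$ only shows that $G_z$ is a \emph{right} inverse of $T'-z$; to conclude $z\in\rho(T')$ you also need injectivity of $T'-z$, which you can get by running the identical construction for the adjoint form $\mathfrak{s}^*$ at $\bar z$ (the hypotheses are symmetric under this replacement since $\|(aT+b)(T-\bar z)^{-1}\|=\|(aT+b)(T-z)^{-1}\|$), so that $(T'-z)^*$ is also surjective. Second, the descent of $\mathfrak{s}$ to $\overline{\mathrm{Ran}\,|W|^{1/2}}$ requires the homogeneity trick ($\mathfrak{s}(u,v)=\lambda^{-1}\mathfrak{s}(\lambda u,v)$) to see that $\mathfrak{s}(u,v)=0$ whenever $|W|^{1/2}u=0$; you flag this yourself, and it is routine. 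With those two items filled in, your argument is a complete and correct proof of the lemma.
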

\begin{Proof}\rm
\cite[VI, \S3, Theorem 3.9]{Kato}
\qed
\end{Proof}

\end{appendices}

\section*{Acknowledgement}
This work was supported by JST SPRING, Grant Number JPMJSP2136.

\end{document}